\theoremstyle{definition}
\newtheorem{definition}{Definition}[section]
\newtheorem{theorem}[definition]{Theorem}
\newtheorem{corollary}[definition]{Corollary}
\newtheorem{proposition}[definition]{Proposition}
\newtheorem{remark}[definition]{Remark}
\newtheorem{conjecture}[definition]{Conjecture}
\newtheorem*{definition*}{Definition}
\newtheorem*{theorem*}{Theorem}
\newtheorem*{lemma*}{Lemma}
\newtheorem*{corollary*}{Corollary}
\newtheorem*{proposition*}{Proposition}
\newtheorem*{example*}{Example}
\newtheorem*{remark*}{Remark}
\newtheorem*{conjecture*}{Conjecture}
\newtheorem*{acknowledgments*}{Acknowledgments}
\begin{document}

\title{Cohomological Arithmetic Statistics for Principally Polarized Abelian Varieties over Finite Fields}
\author{Aleksander Shmakov}
\date{}

\maketitle


\begin{abstract}
There is a natural probability measure on the set of isomorphism classes of principally polarized Abelian varieties of dimension \(g\) over \(\mathbb{F}_q\), weighted by the number of automorphisms. The distributions of the number of \(\mathbb{F}_q\)-rational points are related to the cohomology of fiber powers of the universal family of principally polarized Abelian varieties. To that end we compute the cohomology \(H^i(\mathcal{X}^{\times n}_g,\mathbb{Q}_\ell)\) for \(g=1\) using results of Eichler-Shimura and for \(g=2\) using results of Lee-Weintraub and Petersen, and we compute the compactly supported Euler characteristics \(e_\mathrm{c}(\mathcal{X}^{\times n}_g,\mathbb{Q}_\ell)\) for \(g=3\) using results of Hain and conjectures of Bergstr\"om-Faber-van der Geer. In each of these cases we identify the range in which the point counts \(\#\mathcal{X}^{\times n}_g(\mathbb{F}_q)\) are polynomial in \(q\). Using results of Borel and Grushevsky-Hulek-Tommasi on cohomological stability, we adapt arguments of Achter-Erman-Kedlaya-Wood-Zureick-Brown to pose a conjecture about the asymptotics of the point counts \(\#\mathcal{X}^{\times n}_g(\mathbb{F}_q)\) in the limit \(g\rightarrow\infty\).
\end{abstract}

\section*{Introduction}

	Let \([\mathcal{A}_g(\mathbb{F}_q)]\) be the set of isomorphism classes of principally polarized Abelian varieties of dimension \(g\) over \(\mathbb{F}_q\). The cardinality \(\#[\mathcal{A}_g(\mathbb{F}_q)]\) is finite; of course, for each \([A,\lambda]\in[\mathcal{A}_g(\mathbb{F}_q)]\) the cardinality \(\#A(\mathbb{F}_q)\) is finite, and is constant in its isogeny class. One would like to understand how the point counts of principally polarized Abelian varieties over \(\mathbb{F}_q\) distribute. 
	
	Experience informs us that such point counting problems are better behaved when weighted by the number of automorphisms. To that end let \(\mathcal{A}_g(\mathbb{F}_q)\) be the groupoid of principally polarized Abelian varieties of dimension \(g\) over \(\mathbb{F}_q\). Consider the groupoid cardinality 
	\begin{align*}
		\#\mathcal{A}_g(\mathbb{F}_q)=\sum_{[A,\lambda]\in[\mathcal{A}_g(\mathbb{F}_q)]}\frac{1}{\#\mathrm{Aut}_{\mathbb{F}_q}(A,\lambda)}
	\end{align*}
	For example, one has (classically for \(g=1\), by Lee-Weintraub \cite[Corollary 5.2.3]{LeeWeintraub} for \(g=2\) and by Hain \cite[Theorem 1]{Hain} for \(g=3\)): 
	\begin{align*}
		\#\mathcal{A}_1(\mathbb{F}_q) &= q\\
		\#\mathcal{A}_2(\mathbb{F}_q) &= q^3+q^2\\
		\#\mathcal{A}_3(\mathbb{F}_q) &= q^6+q^5+q^4+q^3+1
	\end{align*}
	Consider the natural probability measure \(\mu_{\mathcal{A}_g(\mathbb{F}_q)}\) on \([\mathcal{A}_g(\mathbb{F}_q)]\) such that \([A,\lambda]\in[\mathcal{A}_g(\mathbb{F}_q)]\) has mass weighted by the number of automorphisms:  
	\begin{align*}
		\mu_{\mathcal{A}_g(\mathbb{F}_q)}([A,\lambda])=\frac{1}{\#\mathcal{A}_g(\mathbb{F}_q)\#\mathrm{Aut}_{\mathbb{F}_q}(A,\lambda)}
	\end{align*}
	On the discrete probability space \(([\mathcal{A}_g(\mathbb{F}_q)],2^{[\mathcal{A}_g(\mathbb{F}_q)]},\mu_{\mathcal{A}_g(\mathbb{F}_q)})\) consider the random variable \(\#A_g(\mathbb{F}_q):[\mathcal{A}_g(\mathbb{F}_q)]\rightarrow\mathbb{Z}\) assigning to \([A,\lambda]\in[\mathcal{A}_g(\mathbb{F}_q)]\) the point count \(\#A(\mathbb{F}_q)\). Our goal is to understand, among other things, the expected values \(\mathbb{E}(\#A_g(\mathbb{F}_q))\), and more generally the higher moments \(\mathbb{E}(\#A_g(\mathbb{F}_q)^n)\) with respect to the natural probability measure \(\mu_{\mathcal{A}_g(\mathbb{F}_q)}\). 
	
	For example, one has the expected values (classically for \(g=1\), by Lee \cite[Corollary 1.4]{Lee} for \(g=2\), and by \ref{Theorem3} for \(g=3\)): 
	\begin{align*}
		\mathbb{E}(\#A_1(\mathbb{F}_q)) &= q+1\\
		\mathbb{E}(\#A_2(\mathbb{F}_q)) &= q^2+q+1-\frac{1}{q^3+q^2}\\ 
		\mathbb{E}(\#A_3(\mathbb{F}_q)) &= q^3+q^2+q+1-\frac{q^2+q}{q^6+q^5+q^4+q^3+1}
	\end{align*}
	and one has the expected values (classically for \(g=1\), by Lee \cite[Corollary 1.5]{Lee} for \(g=2\), and by \ref{Theorem3} for \(g=3\)): 
	\begin{align*}
		\mathbb{E}(\#A_1(\mathbb{F}_q)^2) &= q^2+3q+1-\frac{1}{q}\\ 
		\mathbb{E}(\#A_2(\mathbb{F}_q)^2) &= q^4+3q^3+6q^2+3q-\frac{5q^2+5q+3}{q^3+q^2}\\
		\mathbb{E}(\#A_3(\mathbb{F}_q)^2) &= q^6+3q^5+6q^4+10q^3+6q^2+2q-2-\frac{8q^5+14q^4+12q^3+7q^2-2q-7}{q^6+q^5+q^4+q^3+1}
	\end{align*}
	Many more expected values are computed and displayed in \ref{MGF1}, \ref{MGF2}, and \ref{MGF3} later in the paper. 
	
	The above expected values are obtained by applying the Grothendieck-Lefschetz trace formula to the \(\ell\)-adic cohomology of the universal family of principally polarized Abelian varieties in order to produce the required point counts over finite fields. Let \(\mathcal{A}_g\) be the moduli of principally polarized Abelian varieties of dimension \(g\) and let \(\pi:\mathcal{X}_g\rightarrow\mathcal{A}_g\) be the universal family of Abelian varieties over \(\mathcal{A}_g\). Consider the \(n\)-fold fiber product: 
	\begin{align*}
		\pi^n:\mathcal{X}^{\times n}_g:=\underbrace{\mathcal{X}_g\times_{\mathcal{A}_g}\hdots\times_{\mathcal{A}_g}\mathcal{X}_g}_n\rightarrow\mathcal{A}_g
	\end{align*}
	Then the expected value \(\mathbb{E}(\#A_g(\mathbb{F}_q)^n)\) is related to the groupoid cardinality \(\#\mathcal{X}^{\times n}_g(\mathbb{F}_q)\):
	\begin{align*}
		\mathbb{E}(\#A_g(\mathbb{F}_q)^n)=\frac{\#\mathcal{X}^{\times n}_g(\mathbb{F}_q)}{\#\mathcal{A}_g(\mathbb{F}_q)}
	\end{align*}
	In order to compute the groupoid cardinalities \(\#\mathcal{X}^{\times n}_g(\mathbb{F}_q)\) it is enough to compute the compactly supported Euler characteristic \(e_\mathrm{c}(\mathcal{X}^{\times n}_g,\mathbb{Q}_\ell):=\sum_{i\geq 0}(-1)^iH^i_\mathrm{c}(\mathcal{X}^{\times n}_g,\mathbb{Q}_\ell)\) as an element of the Grothendieck group of \(\ell\)-adic Galois representations, in which case by applying the Grothendieck-Lefschetz trace formula we have:
	\begin{align*}
		\#\mathcal{X}^{\times n}_g(\mathbb{F}_q)=\mathrm{tr}(\mathrm{Frob}_q|e_\mathrm{c}(\mathcal{X}^{\times n}_g,\mathbb{Q}_\ell)):=\sum_{i\geq 0}\mathrm{tr}(\mathrm{Frob}_q|H^i_\mathrm{c}(\mathcal{X}^{\times n}_g,\mathbb{Q}_\ell))
	\end{align*}
	Note that since \(\mathcal{X}^{\times n}_g\) is the complement of a normal crossings divisor of a smooth proper Deligne-Mumford stack over \(\mathbb{Z}\) (see \cite[Chapter VI, Theorem 1.1]{FaltingsChai}), the \(\ell\)-adic \'etale cohomology \(H^i(\mathcal{X}^{\times n}_{g,\overline{\mathbb{Q}}},\mathbb{Q}_\ell)\) is unramified for all primes \(p\neq\ell\) (so that the action of \(\mathrm{Frob}_p\) is well-defined) and is isomorphic to the \(\ell\)-adic \'etale cohomology \(H^i(\mathcal{X}^{\times n}_{g,\overline{\mathbb{F}}_p},\mathbb{Q}_\ell)\) as a representation of \(\mathrm{Gal}(\overline{\mathbb{F}}_p/\mathbb{F}_p)\), with the action of \(\mathrm{Gal}(\overline{\mathbb{Q}}_p/\mathbb{Q}_p)\subseteq\mathrm{Gal}(\overline{\mathbb{Q}}/\mathbb{Q})\) factoring through the surjection \(\mathrm{Gal}(\overline{\mathbb{Q}}_p/\mathbb{Q}_p)\rightarrow\mathrm{Gal}(\overline{\mathbb{F}}_p/\mathbb{F}_p)\). Consequently we will use the cohomology over \(\overline{\mathbb{Q}}\) and the cohomology over \(\overline{\mathbb{F}}_p\) somewhat interchangeably, dropping either of these fields from the subscript whenever stating results which are true for both of these situations, as we have done above. 
	
	The computation requires three results: the first result \ref{Leray}, due to Deligne, involves the degeneration of the Leray spectal sequence computing \(H^*(\mathcal{X}^{\times n}_g,\mathbb{Q}_\ell)\) in terms of the cohomology of the \(\ell\)-adic local systems \(\mathbb{R}^j\pi^n_*\mathbb{Q}_\ell\) on \(\mathcal{A}_g\), the second result \ref{Kunneth} expresses the local systems \(\mathbb{R}^j\pi^n_*\mathbb{Q}_\ell\) in terms of the local systems \(\mathbb{V}_\lambda\) on \(\mathcal{A}_g\) corresponding to the irreducible representation of \(\mathrm{Sp}_{2g}\) of highest weight \(\lambda\), and the third result (\ref{EichlerShimura1} for \(g=1\) due to Eichler-Shimura, \ref{EichlerShimura2} for \(g=2\) due to Lee-Weintraub and Petersen, and \ref{EichlerShimura3} for \(g=3\) due to Hain and Bergstr\"om-Faber-van der Geer) computes the cohomology of \(\ell\)-adic cohomology of the local systems \(\mathbb{V}_\lambda\) on \(\mathcal{A}_g\). These results about the cohomology of local systems relies on the work of many people and results of the Langlands program as input. 
	
	Indeed, the expected values displayed so far might give the impression that the compactly supported Euler characteristics \(e_\mathrm{c}(\mathcal{X}^{\times n}_g,\mathbb{Q}_\ell)\) are Tate type, so that the point counts \(\#\mathcal{X}^{\times n}_g(\mathbb{F}_q)\) are polynomial in \(q\). This is not true in general: the compactly supported Euler characteristics \(e_\mathrm{c}(\mathcal{X}^{\times n}_g,\mathbb{Q}_\ell)\) in general involve \(\ell\)-adic Galois representations attached to vector-valued Siegel modular forms for \(\mathrm{Sp}_{2g}(\mathbb{Z})\), so that the point counts \(\#\mathcal{X}^{\times n}_g(\mathbb{F}_q)\) in general involve traces of Hecke operators on spaces of vector-valued Siegel modular forms. The relation between traces of Frobenius and traces of Hecke operators is ultimately obtained by the Langlands-Kottwitz method by comparing the Grothendieck-Lefschetz trace formula to the stabilization of the Arthur-Selberg trace formula \cite{Kottwitz}; while this strategy is overly sophisticated in the case \(g=1\), it is the strategy used in the work of Petersen \cite{Petersen} in the case \(g=2\) and by unpublished work of Ta\"ibi \cite{TaibiAg} in the case \(g\geq 3\). 

\paragraph{Summary of Results}
	
	For \(g=1,2\) we know enough about the cohomology of local systems on \(\mathcal{A}_g\) to compute \(H^i(\mathcal{X}^{\times n}_g,\mathbb{Q}_\ell)\) as an \(\ell\)-adic Galois representation (up to semisimplification). In the case \(g=1\) a classical result of Eichler-Shimura (see for example \cite[Theorem 2.3]{BFvdG3}) implies the following result:
	
	\begin{theorem*}
	\ref{Theorem1} The cohomology \(H^i(\mathcal{X}^{\times n}_1,\mathbb{Q}_\ell)\) is Tate type for all \(i\) and all \(1\leq n\leq 9\). The cohomology \(H^i(\mathcal{X}^{\times 10}_1,\mathbb{Q}_\ell)\) is Tate type for all \(i\neq 11\), whereas for \(i=11\) we have
	\begin{align*}
		H^{11}(\mathcal{X}^{\times 10}_1,\mathbb{Q}_\ell)=\mathbb{S}_{\Gamma(1)}[12]+\mathbb{L}^{11}+99\mathbb{L}^{10}+1925\mathbb{L}^9+12375\mathbb{L}^8+29700\mathbb{L}^7
	\end{align*}
	where \(\mathbb{S}_{\Gamma(1)}[12]\) is the \(2\)-dimensional \(\ell\)-adic Galois representation attached to the weight \(12\) cusp form \(\Delta\in S_{12}(\Gamma(1))\). In particular the  compactly supported Euler characteristic \(e_\mathrm{c}(\mathcal{X}^{\times n}_1,\mathbb{Q}_\ell)\) is not Tate type if \(n\geq 10\). 
	\end{theorem*}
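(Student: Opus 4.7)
The plan is to combine Deligne's Leray degeneration (\ref{Leray}), the K\"unneth decomposition of the higher direct images (\ref{Kunneth}), and the Eichler-Shimura description of $H^*(\mathcal{A}_1, \mathbb{V}_k)$ (\ref{EichlerShimura1}). First, \ref{Leray} reduces the computation to
\[ H^i(\mathcal{X}_1^{\times n}, \mathbb{Q}_\ell) = \bigoplus_{p + q = i} H^p(\mathcal{A}_1, \mathbb{R}^q \pi^n_* \mathbb{Q}_\ell), \]
and since $\mathcal{A}_1 = \mathcal{M}_{1,1}$ is a smooth one-dimensional Deligne-Mumford stack with affine coarse moduli space, $H^p(\mathcal{A}_1, -)$ vanishes for $p \ge 2$ on any local system. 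Second, \ref{Kunneth} gives
\[ \mathbb{R}^q \pi^n_* \mathbb{Q}_\ell = \bigoplus_{\substack{a + m + b = n \\ m + 2b = q}} \binom{n}{a, m, b}\, \mathbb{V}^{\otimes m} \otimes \mathbb{L}^b, \]
where $\mathbb{V} = \mathbb{R}^1 \pi_* \mathbb{Q}_\ell$ is the rank-two standard local system and the factor $\mathbb{L}^b$ comes from $b$ copies of $\wedge^2 \mathbb{V} \cong \mathbb{L}$ via the Weil pairing. For $\mathrm{Sp}_2 = \mathrm{SL}_2$, Clebsch-Gordan together with the same Weil pairing gives
\[ \mathbb{V}^{\otimes m} = \bigoplus_{\substack{0 \le k \le m \\ k \equiv m \pmod{2}}} c_{m, k}\, \mathbb{V}_k \otimes \mathbb{L}^{(m - k)/2}, \qquad c_{m, k} = \binom{m}{(m - k)/2} - \binom{m}{(m - k)/2 - 1}, \]
where the Tate twist $\mathbb{L}^{(m - k)/2}$ is forced by the weight filtration.

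Third, \ref{EichlerShimura1} gives $H^0(\mathcal{A}_1, \mathbb{V}_k) = \mathbb{Q}_\ell$ if $k = 0$ and zero otherwise, while $H^1(\mathcal{A}_1, \mathbb{V}_k) = \mathbb{S}_{\Gamma(1)}[k + 2] + \mathbb{L}^{k + 1}$ for $k \ge 2$ even and zero for $k$ odd. The arithmetic input is $\dim \mathbb{S}_{\Gamma(1)}[k + 2] = 2 \dim S_{k + 2}(\mathrm{SL}_2(\mathbb{Z}))$, which vanishes unless $k + 2 \in \{12, 16, 18, 20, 22, 26, \ldots\}$, with the first nonzero case $k = 10$ attached to $\Delta \in S_{12}(\Gamma(1))$. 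For $n \le 9$, every $\mathbb{V}_k$ appearing in some $\mathbb{V}^{\otimes m}$ has $k \le m \le n \le 9$, so no cusp form piece can appear and $H^i(\mathcal{X}_1^{\times n}, \mathbb{Q}_\ell)$ is Tate type for every $i$.

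For $n = 10$, the summand $\mathbb{V}_{10}$ appears in $\mathbb{V}^{\otimes m}$ only for $m = 10$, forcing $a = b = 0$, zero Tate twist, and multiplicity $\binom{10}{0, 10, 0} \cdot c_{10, 10} = 1$. Thus $\mathbb{V}_{10}$ appears in $\mathbb{R}^{10} \pi^{10}_* \mathbb{Q}_\ell$ exactly once and in no other $\mathbb{R}^q$; since $H^0(\mathcal{A}_1, \mathbb{V}_{10}) = 0$, its cuspidal contribution falls entirely in $H^1(\mathcal{A}_1, \mathbb{V}_{10}) = \mathbb{S}_{\Gamma(1)}[12] + \mathbb{L}^{11}$, which via Leray lies in $H^{11}(\mathcal{X}_1^{\times 10}, \mathbb{Q}_\ell)$ and in no other degree. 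For every other $\mathbb{V}_k$ appearing with $k \le 8$ even, $\mathbb{S}_{\Gamma(1)}[k + 2] = 0$, so only the Eisenstein piece $\mathbb{L}^{k + 1}$ contributes, yielding Tate type classes.

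To extract the explicit Tate coefficients in $H^{11}$, a summand $\mathbb{V}_k \otimes \mathbb{L}^{(m - k)/2 + b}$ of $\mathbb{R}^{10}\pi^{10}_*\mathbb{Q}_\ell$ contributes its Eisenstein piece $\mathbb{L}^{k + 1 + (m - k)/2 + b}$. Combining $k + 1 + (m - k)/2 + b = j$ with $m + 2b = 10$ forces $k = 2j - 12$, so the coefficient of $\mathbb{L}^j$ in $H^{11}(\mathcal{X}_1^{\times 10}, \mathbb{Q}_\ell)$ equals
\[ \sum_{b \ge 0} \binom{10}{b,\, 10 - 2b,\, b}\, c_{10 - 2b,\, 2j - 12}, \]
and direct evaluation yields $1, 99, 1925, 12375, 29700$ for $j = 11, 10, 9, 8, 7$. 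The only subtle step is correctly tracking the Weil-pairing Tate twist $\mathbb{L}^{(m - k)/2}$ in the $\mathrm{SL}_2$-branching; omitting it would yield the wrong coefficients. Once this twist is in place, both the qualitative claim and the explicit formula reduce to a finite combinatorial enumeration.
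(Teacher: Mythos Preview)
Your argument for $1\le n\le 10$ is correct and follows the same route as the paper: Leray degeneration, K\"unneth, and Eichler--Shimura, with Clebsch--Gordan replacing the paper's iterated tensor-product rule. Your explicit multiplicity formula and the direct evaluation of the Tate coefficients in $H^{11}(\mathcal{X}^{\times 10}_1,\mathbb{Q}_\ell)$ are a nice addition that the paper leaves implicit.

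There is, however, a genuine gap: you do not address the final clause, that $e_\mathrm{c}(\mathcal{X}^{\times n}_1,\mathbb{Q}_\ell)$ is not Tate type for \emph{all} $n\ge 10$. Your computation settles $n=10$ (the only non-Tate piece sits in $H^{11}$, so no cancellation is possible), but for $n\ge 11$ one must argue that the $\mathbb{S}_{\Gamma(1)}[12]$ contribution cannot be cancelled in the alternating sum. The paper supplies this argument: by K\"unneth, $H^{11}(\mathcal{X}^{\times 10}_1,\mathbb{Q}_\ell)$ is a summand of $H^{11}(\mathcal{X}^{\times n}_1,\mathbb{Q}_\ell)$ for every $n\ge 10$, so $\mathbb{S}_{\Gamma(1)}[12]$ always appears in odd degree. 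Any cancelling copy would have to lie in some even $H^i$; but since $H^*(\mathcal{A}_1,\mathbb{V}_\lambda)=0$ for $\lambda$ odd and $H^2(\mathcal{A}_1,\mathbb{V}_\lambda)=0$ always, the even-degree cohomology of $\mathcal{X}^{\times n}_1$ is built entirely from $H^0(\mathcal{A}_1,\mathbb{V}_\lambda)$, which contains no irreducible $2$-dimensional Galois representations. Adding this paragraph completes your proof.
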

	
	In the case \(g=2\) results of Lee-Weintraub \cite[Corollary 5.2.3]{LeeWeintraub} and Petersen \cite[Theorem 2.1]{Petersen} imply following result: 
	
	\begin{theorem*}
	\ref{Theorem2} The cohomology \(H^i(\mathcal{X}^{\times n}_2,\mathbb{Q}_\ell)\) is Tate type for all \(i\) and all \(1\leq n\leq 6\). The cohomology \(H^i(\mathcal{X}^{\times 7}_2,\mathbb{Q}_\ell)\) is Tate type for all \(i\neq 17\), whereas for \(i=17\) we have
	\begin{align*}
		H^{17}(\mathcal{X}^{\times 7}_2,\mathbb{Q}_\ell)=\mathbb{S}_{\Gamma(1)}[18]+\mathbb{L}^{17}+1176\mathbb{L}^{15}+63700\mathbb{L}^{13}+6860\mathbb{L}^{12}+321048\mathbb{L}^{11}+294440\mathbb{L}^{10}+\mathbb{L}^9
	\end{align*}
	where \(\mathbb{S}_{\Gamma(1)}[18]\) is the \(2\)-dimensional \(\ell\)-adic Galois representation attached to the weight \(18\) cusp form \(f_{18}=\Delta E_6\in S_{18}(\Gamma(1))\). In particular the  compactly supported Euler characteristic \(e_\mathrm{c}(\mathcal{X}^{\times n}_2,\mathbb{Q}_\ell)\) is not Tate type if \(n\geq 7\). 
	\end{theorem*}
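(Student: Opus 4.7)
The plan is to proceed along the same lines as the proof of Theorem~\ref{Theorem1}. First I would apply the Leray spectral sequence of \ref{Leray} for $\pi^n : \mathcal{X}_2^{\times n} \to \mathcal{A}_2$, which degenerates at $E_2$, reducing the computation of $H^i(\mathcal{X}_2^{\times n}, \mathbb{Q}_\ell)$ to that of the groups $H^a(\mathcal{A}_2, \mathbb{R}^b \pi^n_* \mathbb{Q}_\ell)$ with $a+b = i$. The K\"unneth-type decomposition of \ref{Kunneth} then identifies $\mathbb{R}^b \pi^n_* \mathbb{Q}_\ell$ with a direct sum of Tate-twisted local systems $\mathbb{V}_\lambda$ for dominant weights $\lambda = (l_1, l_2)$ of $\mathrm{Sp}_4$, with explicit multiplicities coming from the $\mathrm{Sp}_4$-equivariant decomposition of $\wedge^b(V^{\oplus n})$, where $V$ is the standard $4$-dimensional representation.

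Next I would invoke the Lee-Weintraub/Petersen description (\ref{EichlerShimura2}) of $H^*(\mathcal{A}_2, \mathbb{V}_\lambda)$ for each $\lambda$ of interest. For small values of $|\lambda| = l_1 + l_2$, the Eisenstein and endoscopic pieces are of Tate type, while both the Siegel cusp form contribution and any Saito-Kurokawa type lift contribution are absent. The first elliptic cusp-form contribution that can appear is $\mathbb{S}_{\Gamma(1)}[18]$, coming from $f_{18} = \Delta E_6$, and it enters $H^*(\mathcal{A}_2, \mathbb{V}_\lambda)$ for a particular dominant weight $\lambda$ whose first occurrence as a constituent of $\mathbb{R}^b \pi^n_* \mathbb{Q}_\ell$ forces $n \geq 7$. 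For $n \leq 6$ one then checks that every $\mathbb{V}_\lambda$ appearing has purely Tate-type cohomology, giving the first assertion; for $n = 7$ the single non-Tate summand accounts for the $\mathbb{S}_{\Gamma(1)}[18]$ term in the displayed formula, while the remaining Tate contributions assemble into the powers of $\mathbb{L}$.

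The remaining work is bookkeeping: for each $1 \leq n \leq 7$ and $0 \leq b \leq 2n$, enumerate the irreducible $\mathrm{Sp}_4$-constituents of $\wedge^b(V^{\oplus n})$ with their multiplicities, pair them with the Tate-type pieces of $H^a(\mathcal{A}_2, \mathbb{V}_\lambda)$ tabulated by Lee-Weintraub and Petersen (Eisenstein, endoscopic, and the relevant Saito-Kurokawa piece), and keep careful track of the Tate twists from the K\"unneth decomposition together with the degree shifts in the Leray spectral sequence. In particular one verifies that no genuinely vector-valued Siegel cusp form can contribute in the weight range reached by $n \leq 7$, which reduces to the known vanishing of the relevant spaces $S_{k,j}(\mathrm{Sp}_4(\mathbb{Z}))$ in low weight.

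The main obstacle is combinatorial rather than conceptual. The plethysms $\wedge^b(V^{\oplus n})$ proliferate quickly with $n$, and one must systematically cross-reference the resulting list of weights with Petersen's Eisenstein/endoscopic/cusp decomposition of $H^*(\mathcal{A}_2, \mathbb{V}_\lambda)$ to extract both the polynomial-in-$q$ Tate piece for all $n$ and the precise multiplicity and Tate twist of $\mathbb{S}_{\Gamma(1)}[18]$ in the $n = 7$ case. Organizing this tabulation so that the cancellations in the Euler-characteristic sums are transparent, and confirming that $n = 7$ is genuinely the first threshold, is where most of the effort will go.
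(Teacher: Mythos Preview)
Your approach for the computational part is essentially the same as the paper's: combine \ref{Leray}, \ref{Kunneth}, and \ref{EichlerShimura2}, and then do the bookkeeping (the paper in fact delegates the multiplicities $m^{j,n}_\lambda$ to a SAGE program). The identification of $\mathbb{V}_{7,7}$ as the weight carrying $\mathbb{S}_{\Gamma(1)}[18]$ via the Saito--Kurokawa lift of $f_{18}$, and the check that no genuine Siegel cusp form contributes for $n\le 7$, is exactly right.

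However, your proposal does not address the final clause of the statement: that $e_\mathrm{c}(\mathcal{X}^{\times n}_2,\mathbb{Q}_\ell)$ is not Tate type for \emph{all} $n\ge 7$. Verifying that $n=7$ is the first threshold is not enough; one must argue that the non-Tate contribution cannot be cancelled in the alternating sum for any larger $n$. This is a genuine issue, because as $n$ grows the K\"unneth decomposition produces many copies of $\mathbb{S}_{\Gamma(1)}[18]$ in various cohomological degrees, and one needs to rule out cancellation between even and odd degrees. The paper handles this in two stages. First, for $7\le n\le 15$, it uses the parity observation that $\mathbb{S}_{\Gamma(1)}[18]$ enters via $H^3(\mathcal{A}_2,\mathbb{V}_{7,7})$ and hence lands in odd $H^i(\mathcal{X}^{\times n}_2)$; any cancelling $2$-dimensional irreducible would have to come from $H^j(\mathcal{A}_2,\mathbb{V}_\lambda)$ with $j$ even, and by Petersen's description the only such pieces are $\mathbb{S}_{\Gamma(1)}[\lambda_2+2]\mathbb{L}^{\lambda_1+2}$ in $H^4$, forcing $\lambda_2=16$, which is out of range for $n\le 15$. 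Second, for $n\ge 11$ the paper switches to tracking the irreducible \emph{4-dimensional} representation $\mathbb{S}_{\Gamma(1)}[6,8]$ attached to $\chi_{6,8}\in S_{6,8}(\Gamma(1))$ (general type), which enters via $H^3(\mathcal{A}_2,\mathbb{V}_{11,5})$; no irreducible $4$-dimensional piece can occur in even-degree $H^j(\mathcal{A}_2,\mathbb{V}_\lambda)$, so this contribution never cancels. The two ranges overlap and together cover all $n\ge 7$. You should incorporate this non-cancellation argument into your plan.
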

	
	The cohomology groups \(H^i(\mathcal{X}^{\times n}_1,\mathbb{Q}_\ell)\) for \(1\leq n\leq 10\) and \(H^i(\mathcal{X}^{\times n}_2,\mathbb{Q}_\ell)\) for \(1\leq n\leq 7\) are displayed in the tables 1 and 2 at the end of the paper. The Euler characteristics \(e_\mathrm{c}(\mathcal{X}^{\times n}_1,\mathbb{Q}_\ell)\) for \(1\leq n\leq 10\) and \(e_\mathrm{c}(\mathcal{X}^{\times n}_2,\mathbb{Q}_\ell)\) for \(1\leq n\leq 7\) are displayed along with these theorems later in the paper. 
	
	In the case \(g=3\) there are precise conjectures of Bergstr\"om-Faber-van der Geer \cite[Conjecture 7.1]{BFvdG3} about the compactly supported Euler characteristics of local systems on \(\mathcal{A}_3\) as an element of the Grothendieck group of \(\ell\)-adic Galois representations. These conjectures are now known at least for small highest weight \(\lambda\) using dimension formulas for spaces of vector-valued Siegel modular forms for \(\mathrm{Sp}_6(\mathbb{Z})\) obtained by Ta\"ibi \cite{Taibi}. These conjectures, along with a result of Hain \cite[Theorem 1]{Hain} implies the following result: 
	
	\begin{theorem*}
	\ref{Theorem3} Assume conjectures \ref{EichlerShimura3} and \ref{MiddleGalois}. Then the Euler characteristic \(e_\mathrm{c}(\mathcal{X}^{\times n}_3,\mathbb{Q}_\ell)\) is Tate type for all \(1\leq n\leq 5\). The compactly supported Euler characteristic \(e_\mathrm{c}(\mathcal{X}^{\times 6}_3,\mathbb{Q}_\ell)\) is given by:
	\begin{align*}
		e_\mathrm{c}(\mathcal{X}^{\times 6}_3,\mathbb{Q}_\ell) &= (\mathbb{L}^6+21\mathbb{L}^5+120\mathbb{L}^4+280\mathbb{L}^3+309\mathbb{L}^2+161\mathbb{L}+32)\mathbb{S}_{\Gamma(1)}[0,10]\\
			&+ \mathbb{L}^{24}+22\mathbb{L}^{23}+253\mathbb{L}^{22}+2024\mathbb{L}^{21}+11362\mathbb{L}^{20}+46613\mathbb{L}^{19}\\
			&+ 146665\mathbb{L}^{18}+364262\mathbb{L}^{17}+720246\mathbb{L}^{16}+1084698\mathbb{L}^{15}+1036149\mathbb{L}^{14}+38201\mathbb{L}^{13}\\
			&- 1876517\mathbb{L}^{12}-3672164\mathbb{L}^{11}-4024657\mathbb{L}^{10}-2554079\mathbb{L}^9+101830\mathbb{L}^8+2028655\mathbb{L}^7\\
			&+ 2921857\mathbb{L}^6+2536864\mathbb{L}^5+1553198\mathbb{L}^4+687157\mathbb{L}^3+215631\mathbb{L}^2+45035\mathbb{L}+4930 
	\end{align*}
	where \(\mathbb{S}_{\Gamma(1)}[0,10]=\mathbb{S}_{\Gamma(1)}[18]+\mathbb{L}^9+\mathbb{L}^8\) is the \(4\)-dimensional \(\ell\)-adic Galois representation attached to the Saito-Kurokawa lift \(\chi_{10}\in S_{0,10}(\Gamma(1))\) of the weight \(18\) cusp form \(f_{18}=\Delta E_6\in S_{18}(\Gamma(1))\). In particular the compactly supported Euler characteristic \(e_\mathrm{c}(\mathcal{X}^{\times n}_3,\mathbb{Q}_\ell)\) is not Tate type if \(n\geq 6\).
	\end{theorem*}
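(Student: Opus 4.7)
My approach follows the three-step strategy laid out earlier. First, I apply the Leray spectral sequence for the smooth proper morphism $\pi^n\colon\mathcal{X}_3^{\times n}\to\mathcal{A}_3$; by Lemma~\ref{Leray} it degenerates at $E_2$, giving in the Grothendieck group of $\ell$-adic Galois representations the identity
\begin{align*}
	e_\mathrm{c}(\mathcal{X}^{\times n}_3,\mathbb{Q}_\ell)=\sum_{j\geq 0}(-1)^j e_\mathrm{c}(\mathcal{A}_3,\mathbb{R}^j\pi^n_*\mathbb{Q}_\ell).
\end{align*}
Second, by Lemma~\ref{Kunneth} each $\mathbb{R}^j\pi^n_*\mathbb{Q}_\ell$ decomposes via K\"unneth and the symplectic decomposition of exterior powers into a sum of local systems $\mathbb{V}_\lambda$ indexed by dominant weights $\lambda=(\lambda_1,\lambda_2,\lambda_3)$ of $\mathrm{Sp}_6$, with explicit multiplicities and Tate twists that I compute via branching rules for $\mathrm{Sp}_6$. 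For each such $\lambda$ I then substitute the predicted Euler characteristic $e_\mathrm{c}(\mathcal{A}_3,\mathbb{V}_\lambda)$ from Conjecture~\ref{EichlerShimura3}, using Hain's formula of \cite[Theorem 1]{Hain} for the trivial weight.

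To separate the Tate and non-Tate regimes, I bound which $\lambda$ can occur in the K\"unneth decomposition for fixed $n$: the total weight $\lambda_1+\lambda_2+\lambda_3$ is controlled by the wedge degrees available from $\mathbb{R}^q\pi_*\mathbb{Q}_\ell=\bigwedge^q\mathbb{R}^1\pi_*\mathbb{Q}_\ell$, so only finitely many $\lambda$ contribute. For $1\leq n\leq 5$ I check weight by weight that every $\lambda$ appearing lies within the range in which Conjecture~\ref{EichlerShimura3} predicts a Tate-type Euler characteristic; equivalently, no genuine or endoscopic Siegel cusp form for $\mathrm{Sp}_6(\mathbb{Z})$ of the corresponding weight exists, which is confirmed in this range by Ta\"ibi's dimension formulas \cite{Taibi}. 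The remaining computation for $n\leq 5$ is a polynomial arithmetic exercise best carried out by machine.

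At $n=6$ a new weight $\lambda$ first acquires nonzero multiplicity whose predicted Euler characteristic on $\mathcal{A}_3$ includes the endoscopic Saito-Kurokawa contribution attached to $\chi_{10}\in S_{0,10}(\Gamma(1))$, producing the summand $\mathbb{S}_{\Gamma(1)}[0,10]$. I isolate this contribution using the explicit form of Conjecture~\ref{EichlerShimura3} and combine it with the (much larger) Tate-type contributions from all other $\lambda$ of total degree at most $6$ to assemble the displayed formula. The main obstacle is the combinatorial size of the K\"unneth decomposition and the accompanying bookkeeping at $n=6$: one must confirm that no cuspidal or endoscopic Galois representation other than $\mathbb{S}_{\Gamma(1)}[0,10]$ is predicted by Conjecture~\ref{EichlerShimura3} for any $\lambda$ of total degree at most $6$, after which the stated polynomial coefficients are forced by the $\mathrm{Sp}_6$-multiplicities and the Tate twists recorded in the Leray-K\"unneth setup.
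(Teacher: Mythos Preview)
Your strategy for computing $e_\mathrm{c}(\mathcal{X}^{\times n}_3,\mathbb{Q}_\ell)$ for $1\leq n\leq 6$ is essentially the paper's own: combine \ref{Leray}, \ref{Kunneth}, and \ref{EichlerShimura3}, with the $\mathrm{Sp}_6$ branching multiplicities computed by machine. One small correction: the weights carrying the Saito--Kurokawa contribution at $n=6$ are $(6,6,0)$, $(6,6,2)$, $(6,6,4)$, $(6,6,6)$, which have total degree up to $18$, not ``total degree at most $6$''; the actual constraint for $n=6$ is $\lambda_1\leq 6$.

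The genuine gap is in the final clause, ``not Tate type if $n\geq 6$''. You establish this only for $n=6$ via the explicit formula. For $n\geq 7$ you need to rule out cancellation of the non-Tate piece in the alternating sum, and this is exactly where Conjecture~\ref{MiddleGalois} enters---you never invoke it. The paper's argument is: for $n\geq 9$, the weight $(9,6,3)$ appears and $e_\mathrm{c}(\mathcal{A}_3,\mathbb{V}_{9,6,3})$ contains $\mathbb{S}_{\Gamma(1)}[3,3,7]$, which has an irreducible $7$-dimensional summand. By \ref{MiddleGalois} any such irreducible lives only in $H^6(\mathcal{A}_3,\mathbb{V}_\lambda)$, hence contributes to $H^i(\mathcal{X}^{\times n}_3,\mathbb{Q}_\ell)$ only for $i$ even (since $|\lambda|$ is even forces the K\"unneth twist to preserve parity), so no cancellation against odd-degree terms is possible. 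The intermediate cases $n=7,8$ are then handled by extending the machine computation and checking that the $\mathbb{S}_{\Gamma(1)}[0,10]$ coefficient remains nonzero. The paper explicitly notes that one cannot simply assert persistence of $\mathbb{S}_{\Gamma(1)}[0,10]$ for all $n\geq 6$ without such an argument, because new $\lambda$ entering at larger $n$ could in principle carry $\mathbb{S}_{\Gamma(1)}[18]$ with opposite sign.
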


	The Euler characteristics \(e_\mathrm{c}(\mathcal{X}^{\times n}_3,\mathbb{Q}_\ell)\) for \(1\leq n\leq 6\) are displayed along with these theorems later in the paper. In view of \cite[Theorem 1.9]{CanningLarsonPayne2}, using the classification results of Chevevier-Ta\"ibi \cite{ChenevierTaibi}, these computations are unconditional for \(1\leq n\leq 3\) on the basis of point counts. 
	
	We have continued these computations until reaching the first modular contributions: in the case \(g=1\) the contribution is through the discriminant cusp form \(\Delta\in S_{12}(\Gamma(1))\) which contributes the irreducible \(2\)-dimensional \(\ell\)-adic Galois representation \(\mathbb{S}_{\Gamma(1)}[12]\), and in the case \(g=2\) and \(g=3\) the contributions are through the Saito-Kurokawa lift \(\chi_{10}\in S_{0,10}(\Gamma(1))\) which contributes the irreducible \(2\)-dimensonal \(\ell\)-adic Galois representation \(\mathbb{S}_{\Gamma(1)}[18]\). One can continue further, where for \(g=2\), in the case \(n=11\) we have contributions from the vector-valued Siegel modular forms \(\chi_{6,8}\in S_{6,8}(\Gamma(1))\) and \(\chi_{4,10}\in S_{4,10}(\Gamma(1))\) of general type (see \cite[Section 25]{vdG} for the relevant dimensions), which contribute the irreducible \(4\)-dimensional \(\ell\)-adic Galois representations \(\mathbb{S}_{\Gamma(1)}[6,8]\) and \(\mathbb{S}_{\Gamma(1)}[4,10]\) (see \cite[Theorem I, Theorem II]{Weissauer}). For \(g=3\), in the case \(n=9\) we have a contribution from an \(8\)-dimensional \(\ell\)-adic Galois representation \(\mathbb{S}_{\Gamma(1)}[3,3,7]\) which decomposes into a \(1\)-dimensional \(\ell\)-adic Galois representation of Tate type and an irreducible \(7\)-dimensional \(\ell\)-adic Galois representation (see \cite[Example 9.1]{BFvdG3}), which is explained by a functorial lift from the exceptional group \(\mathrm{G}_2\) predicted by \cite{GrossSavin}. This is to say that if one continues a bit further, one encounters more complicated \(\ell\)-adic Galois representations in cohomology governing these arithmetic statistics. We end up using each of these contributions to deduce that \(e_\mathrm{c}(\mathcal{X}^{\times n}_g,\mathbb{Q}_\ell)\) is not Tate type above a certain range. 
	
	Nevertheless, it is reasonable to conjecture that these modular contributions to arithmetic statistics are negligible. As explained in \cite{AEKWB}, random matrix heuristics plausibly apply in the limit \(g\rightarrow\infty\) to the Frobenius eigenvalues of \(e_\mathrm{c}(\mathcal{X}^{\times n}_g,\mathbb{Q}_\ell)\) not explained by the existence of algebraic cycles, and bounding the traces of these matrices with high probability leads one to the heuristic that only certain Tate classes contribute to the Grothendieck-Lefschetz trace formula asymptotically. 

	Following this strategy, we pose the following conjecture about the distributions of the point counts \(\#A_g(\mathbb{F}_q)\) in the limit \(g\rightarrow\infty\): 
	
	\begin{conjecture*}
	\ref{Conjecture1} (compare to \cite[Conjecture 1]{AEKWB}) Let \(\lambda=1+\frac{1}{q}+\frac{1}{q(q-1)}=\frac{1}{1-q^{-1}}\). For all \(n\geq 1\) we have
	\begin{align*}
		\lim_{g\rightarrow\infty}q^{-ng}\mathbb{E}(\#A_g(\mathbb{F}_q)^n)=\lambda^{\frac{n(n+1)}{2}}
	\end{align*}
	\end{conjecture*}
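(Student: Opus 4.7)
The plan is to adapt the cohomological stability strategy of Achter-Erman-Kedlaya-Wood-Zureick-Brown \cite{AEKWB}, which treated the analogous conjecture for moduli of curves, to the setting of principally polarized Abelian varieties. Writing
\begin{align*}
q^{-ng}\,\mathbb{E}(\#A_g(\mathbb{F}_q)^n) = q^{-ng}\,\frac{\mathrm{tr}(\mathrm{Frob}_q\mid e_\mathrm{c}(\mathcal{X}^{\times n}_g,\mathbb{Q}_\ell))}{\mathrm{tr}(\mathrm{Frob}_q\mid e_\mathrm{c}(\mathcal{A}_g,\mathbb{Q}_\ell))},
\end{align*}
the first step would be to use Poincar\'e duality on the smooth Deligne-Mumford stacks $\mathcal{A}_g$ and $\mathcal{X}^{\times n}_g$ (of relative dimensions $g(g+1)/2$ and $g(g+1)/2+ng$ respectively) to rewrite these compactly supported Euler characteristics in terms of ordinary cohomology with a Tate twist; the normalization factor $q^{-ng}$ is then naturally absorbed, so the question becomes one about the stable Frobenius-weighted ordinary cohomology as $g\to\infty$.

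Next one applies cohomological stability. By Borel, $H^k(\mathcal{A}_g,\mathbb{Q}_\ell)$ stabilizes as $g\to\infty$ to a polynomial algebra in the odd Chern classes $\lambda_1,\lambda_3,\ldots$ of the Hodge bundle, which are algebraic and hence Tate. By Grushevsky-Hulek-Tommasi, $H^k(\mathcal{X}^{\times n}_g,\mathbb{Q}_\ell)$ also stabilizes, and the stable algebra is freely generated over the Borel algebra by the $n(n+1)/2$ natural tautological divisor classes on the fiber product: the $n$ fiberwise theta divisors $p_i^*\Theta$ for $1\leq i\leq n$, together with the $\binom{n}{2}$ ``difference'' divisors $(p_i-p_j)^*\Theta$ for $1\leq i<j\leq n$. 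These generators are algebraic, so the stable cohomology is entirely of Tate type.

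The decisive and heuristic step is to control the non-stable contribution. Following \cite{AEKWB}, one models the Frobenius eigenvalues on the non-Tate cohomology of $\mathcal{X}^{\times n}_g$, which arise from Galois representations attached to vector-valued Siegel cusp forms for $\mathrm{Sp}_{2g}(\mathbb{Z})$, as random unitary matrices of appropriate rank, and applies concentration of measure to bound their character values. In the limit $g\to\infty$ the normalized non-stable contribution to $q^{-ng}\mathbb{E}(\#A_g(\mathbb{F}_q)^n)$ vanishes with high probability, leaving only the stable Tate contribution. Reading off the Poincar\'e-Frobenius generating function from the Grushevsky-Hulek-Tommasi description and dividing by the corresponding series for $\mathcal{A}_g$ cancels the common Borel factor and leaves one geometric series $(1-q^{-1})^{-1}=\lambda$ for each of the $n(n+1)/2$ tautological divisors, producing exactly $\lambda^{n(n+1)/2}$.

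The main obstacle is the random matrix step: making it rigorous requires uniform bounds on traces of Hecke operators on spaces of vector-valued Siegel cusp forms across all weights and all $g$, well beyond what is presently known, which is precisely why the statement is posed as a conjecture rather than a theorem. A secondary technical task is to verify unambiguously that the surviving factor after dividing by the $\mathcal{A}_g$-series is precisely $\lambda^{n(n+1)/2}$ and not some other power, which requires careful bookkeeping of the Grushevsky-Hulek-Tommasi generators and the weights under which Frobenius acts on their tautological lifts.
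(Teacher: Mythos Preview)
Your approach is essentially the same as the paper's: use Poincar\'e duality, invoke Borel and Grushevsky--Hulek--Tommasi to identify the stable cohomology as a free polynomial algebra on $n(n+1)/2$ Tate classes over the Borel algebra, and argue heuristically that the remaining contribution is negligible so that the ratio of Frobenius-weighted Poincar\'e series collapses to $\lambda^{n(n+1)/2}$.

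The paper's treatment is somewhat more refined in two respects. First, rather than invoking random matrix heuristics directly, the paper isolates the conjectural input as a separate Conjecture~2.4 (the normalized trace on the quotient $B^*_\mathrm{c}=H^*_\mathrm{c}/R^*_\mathrm{c}$ tends to zero) and then \emph{rigorously} deduces Conjecture~2.1 from it. Second, in doing so the paper splits the point count into three pieces $T^{\mathrm{stable}}+T^{\mathrm{unstable}}+N$: the tautological classes in the stable range, the tautological classes \emph{beyond} the stable range, and the genuinely non-tautological quotient. You lump the last two together under ``non-stable contribution'' and treat both heuristically, but the middle piece $T^{\mathrm{unstable}}$ consists of Tate classes to which random matrix reasoning does not apply; the paper handles it instead by an explicit subexponential bound $\dim S^{2i}_n\leq\exp(c_n\sqrt{i})$ on the graded pieces of the tautological ring. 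A minor terminological point: the paper's $\binom{n}{2}$ off-diagonal generators are pullbacks $\pi_{i,j}^*P$ of the universal Poincar\'e divisor rather than your difference classes $(p_i-p_j)^*\Theta$, though the two sets generate the same algebra once the $p_i^*\Theta$ are present.
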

	
	We pose a second conjecture \ref{Conjecture2} about the negligible contribution of certain to these point counts (compare to \cite[Heuristic 2]{AEKWB}) and show that this implies the first conjecture. The computations done in the cases \(g=1\), \(g=2\), and \(g=3\) are consistent with this conjecture in their respective stability ranges. 

\paragraph{Relation to Other Work}

	Much work has been done regarding the cohomology of local systems on \(\mathcal{M}_{g,n}\) and its compactification (see \cite{Pandharipande} for a survey, and for example \cite{Bergstrom}, \cite{BergstromFaber}, \cite{BergstromFaberPayne}, \cite{BergstromTommasi}, \cite{CanningLarsonPayne1}, \cite{CanningLarsonPayne2}, \cite{ChanGalatiusPayne}, \cite{ChanFaberGalatiusPayne}, \cite{Getzler}, \cite{MadsenWeiss}), and likewise for \(\mathcal{A}_g\) and its compactifications (see \cite{HulekTommasi} for a survey, and for example \cite{BFvdG3} \cite{ChenLooijenga}, \cite{GrushevskyHulek}, \cite{GrushevskyHulekTommasi}, \cite{Hain}, \cite{HulekTommasi1}, \cite{HulekTommasi2}, \cite{LeeWeintraub}, \cite{Petersen}). 

	The method we have used to investigate arithmetic statistics for varieties over finite fields is hardly new: it is explained very clearly by Lee \cite{Lee} in the case \(g=2\), where the computations of \(H^i(\mathcal{X}_2,\mathbb{Q}_\ell)\) and \(H^i(\mathcal{X}^{\times 2}_2,\mathbb{Q}_\ell)\) appear. The computations in the case \(g=3\) are new, but use the same method. The theme of identifying in which range modular contributions appear in the cohomology of fiber powers of the universal Abelian variety represents a departure from this previous work. 
	
	The work of Achter-Erman-Kedlaya-Wood-Zureick-Brown \cite{AEKWB} concerns the point counts \(\#\mathcal{M}_{g,n}(\mathbb{F}_q)\) in the limit \(g\rightarrow\infty\), and uses results of Madsen-Weiss \cite{MadsenWeiss} on cohomological stability for \(\mathcal{M}_{g,n}\) to show that the distributions of the point counts \(\#C_g(\mathbb{F}_q)\) are asymptotically Poisson with mean \(q\lambda=q+1+\frac{1}{q-1}=\frac{1}{q-1}\), assuming a conjecture on the negligible contribution of non-tautological classes to point counts. We have used the same method to study the point counts \(\#\mathcal{X}^{\times n}_g(\mathbb{F}_q)\) in the limit \(g\rightarrow\infty\), using results of Borel \cite{BorelI}, \cite{BorelII} and Grushevsky-Hulek-Tommasi \cite{GrushevskyHulekTommasi} on cohomological stability for \(\mathcal{X}^{\times n}_g\) to study the asymptotics of the distributions of the point counts \(\#A_g(\mathbb{F}_q)\), assuming an analogous conjecture on the negligible contribution of unstable classes to point counts. 
	
	The work of Achter-Altu\v{g}-Garcia-Gordon \cite{AAGG} takes a rather different approach to the study arithmetic statistics for principally polarized Abelian varieties over \(\mathbb{F}_q\), starting from a theorem of Kottwitz relating masses of isogeny classes to volumes of tori and twisted orbital integrals, and then relating these to a product of local factors \(\nu_v([A,\lambda],\mathbb{F}_q)\) over all places \(v\) of \(\mathbb{Q}\). By contrast, almost every result we have used about the Galois action on the \(\ell\)-adic cohomology of local systems on \(\mathcal{A}_g\) relies on the Langlands-Kottwitz method relating traces of Frobenius to traces of Hecke operators, starting from the same theorem of Kottwitz and ultimately relating this to the stabilization of the Arthur-Selberg trace formula. It may be interesting to relate these two approaches, for instance by reexamining the computations in this paper in terms of explicit computations of twisted orbital integrals.  
	
\paragraph{Acknowledgments}

	My deepest gratitude goes to Seraphina Lee for providing an early draft of her paper \cite{Lee} and a Sage program on which these computations are based, and for her continued interest and discussions relevant to this work, in particular for catching some errors in earlier drafts. I also thank Jonas Bergstr\"om for helpful discussions regarding the range in which the conjectures on the cohomology of local systems on \(\mathcal{A}_3\) are unconditional. 
	
	I would also like to thank Jim Arthur for his support, and Julia Gordon for giving a talk at the Fields Institute Beyond Endoscopy Mini-Conference which so clearly emphasized to me the connection between arithmetic statistics for Abelian varieties and results of Langlands and Kottwitz. 
	
	Finally I would like to thank Benson Farb and Dan Petersen for encouraging this work in the beginning, and Daniel Litt for encouraging me to finally finish it. 

\section{Arithmetic Statistics and Cohomology of Moduli Stacks}

	We now explain the method we use to study point counts of Abelian varieties over finite fields in terms of the \(\ell\)-adic cohomology of their moduli stacks, following Lee \cite{Lee}.
	
\paragraph{Moduli of Abelian Varieties}
	
	Let \(\mathcal{A}_g\) be the moduli stack of principally polarized Abelian varieties of dimension \(g\) which is a smooth Deligne-Mumford stack of dimension \(\mathrm{dim}(\mathcal{A}_g)=\frac{g(g+1)}{2}\) over \(\mathbb{Z}\) (and hence over any \(\mathbb{F}_q\) by base change) and let \(\mathcal{A}_g(\mathbb{F}_q)\) be the groupoid of principally polarized Abelian varieties of dimension \(g\) over \(\mathbb{F}_q\). Let \(\pi:\mathcal{X}_g\rightarrow\mathcal{A}_g\) be the universal family of Abelian varieties over \(\mathcal{A}_g\). For \(n\geq 1\) consider the \(n\)-th fiber power of the universal family
	\begin{align*}
		\pi^n:\mathcal{X}^{\times n}_g:=\underbrace{\mathcal{X}_g\times_{\mathcal{A}_g}\hdots\times_{\mathcal{A}_g}\mathcal{X}_g}_n\rightarrow\mathcal{A}_g
	\end{align*}
	which is a smooth Deligne-Mumford stack of dimension \(\mathrm{dim}(\mathcal{X}^{\times n}_g)=\frac{g(g+1)}{2}+ng\) over \(\mathbb{Z}\) (and hence over any \(\mathbb{F}_q\) by base change). The fiber of \(\pi^n:\mathcal{X}^{\times n}_g\rightarrow\mathcal{A}_g\) over a point \([A,\lambda]\in\mathcal{A}_g\) is the product \(A^n\), so the point counts \(\#\mathcal{X}^{\times n}_g(\mathbb{F}_q)\) encode the point counts \(\#A(\mathbb{F}_q)^n\) averaged over their moduli and weighted by the number of automorphisms. 
	
	By definition the expected value \(\mathbb{E}(\#A_g(\mathbb{F}_q)^n)\) of the random variable \(\#A_g(\mathbb{F}_q)^n\) with respect the probability measure \(\mu_{\mathcal{A}_g(\mathbb{F}_q)}\) defined in the introduction is given 
	\begin{align*}
		\mathbb{E}(\#A_g(\mathbb{F}_q)^n)=\sum_{[A,\lambda]\in[\mathcal{A}_g(\mathbb{F}_q)]}\frac{\#A(\mathbb{F}_q)^n}{\#\mathcal{A}_g(\mathbb{F}_q)\#\mathrm{Aut}_{\mathbb{F}_q}(A,\lambda)}
	\end{align*}
	which are related to the groupoid cardinality \(\#\mathcal{X}^{\times n}_g(\mathbb{F}_q)\) as follows: 
	
	\begin{proposition}
	(Compare to \cite[Lemma 6.8]{Lee}) The expected value \(\mathbb{E}(\#A_g(\mathbb{F}_q)^n)\) is given 
	\begin{align*}
		\mathbb{E}(\#A_g(\mathbb{F}_q)^n):=\frac{\#\mathcal{X}^{\times n}_g(\mathbb{F}_q)}{\#\mathcal{A}_g(\mathbb{F}_q)}
	\end{align*}
	\end{proposition}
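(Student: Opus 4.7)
The plan is to reduce both sides to explicit sums over $[\mathcal{A}_g(\mathbb{F}_q)]$ and show they agree term-by-term via orbit-stabilizer. The definition of expected value already expresses the left-hand side as $\sum_{[A,\lambda]}\#A(\mathbb{F}_q)^n/(\#\mathcal{A}_g(\mathbb{F}_q)\,\#\mathrm{Aut}_{\mathbb{F}_q}(A,\lambda))$, so it suffices to establish the identity
\[
\#\mathcal{X}^{\times n}_g(\mathbb{F}_q)=\sum_{[A,\lambda]\in[\mathcal{A}_g(\mathbb{F}_q)]}\frac{\#A(\mathbb{F}_q)^n}{\#\mathrm{Aut}_{\mathbb{F}_q}(A,\lambda)}.
\]
Once this is proved, dividing by $\#\mathcal{A}_g(\mathbb{F}_q)$ gives the claim.

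First I would identify the isomorphism classes in the groupoid $\mathcal{X}^{\times n}_g(\mathbb{F}_q)$ with equivalence classes of tuples $(A,\lambda,P_1,\dots,P_n)$, where $(A,\lambda)$ is a principally polarized Abelian variety over $\mathbb{F}_q$ and $(P_1,\dots,P_n)\in A(\mathbb{F}_q)^n$, modulo the action of $\mathrm{Aut}_{\mathbb{F}_q}(A,\lambda)$ on the second factor. The automorphism group of such a tuple is the stabilizer $\mathrm{Stab}_{\mathrm{Aut}_{\mathbb{F}_q}(A,\lambda)}(P_1,\dots,P_n)$. Partitioning the groupoid cardinality by the isomorphism class of the underlying $(A,\lambda)$ yields
\[
\#\mathcal{X}^{\times n}_g(\mathbb{F}_q)=\sum_{[A,\lambda]}\sum_{[(P_1,\dots,P_n)]}\frac{1}{\#\mathrm{Stab}_{\mathrm{Aut}_{\mathbb{F}_q}(A,\lambda)}(P_1,\dots,P_n)},
\]
where the inner sum is over orbits of $\mathrm{Aut}_{\mathbb{F}_q}(A,\lambda)$ on $A(\mathbb{F}_q)^n$.

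Then I would apply the orbit-stabilizer theorem to the finite group $\mathrm{Aut}_{\mathbb{F}_q}(A,\lambda)$ acting on the finite set $A(\mathbb{F}_q)^n$: the size of each orbit equals $\#\mathrm{Aut}_{\mathbb{F}_q}(A,\lambda)/\#\mathrm{Stab}$, so summing over orbits gives $\#A(\mathbb{F}_q)^n=\#\mathrm{Aut}_{\mathbb{F}_q}(A,\lambda)\sum_{[P]}1/\#\mathrm{Stab}(P)$. Substituting this into the inner sum collapses it to $\#A(\mathbb{F}_q)^n/\#\mathrm{Aut}_{\mathbb{F}_q}(A,\lambda)$, producing the desired identity and hence the proposition.

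No step is a serious obstacle; the only mild care needed is in the bookkeeping of automorphism groups for the fiber product $\mathcal{X}^{\times n}_g=\mathcal{X}_g\times_{\mathcal{A}_g}\cdots\times_{\mathcal{A}_g}\mathcal{X}_g$, to confirm that an automorphism of a tuple $(A,\lambda,P_1,\dots,P_n)$ in this 2-fiber-product groupoid is exactly an element of $\mathrm{Aut}_{\mathbb{F}_q}(A,\lambda)$ fixing each $P_i$ (rather than, say, permuting them or acting via a nontrivial 2-morphism), after which the orbit-stabilizer step is formal.
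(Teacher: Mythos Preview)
Your proposal is correct and follows essentially the same approach as the paper: both arguments partition the groupoid cardinality of $\mathcal{X}^{\times n}_g(\mathbb{F}_q)$ over isomorphism classes $[A,\lambda]\in[\mathcal{A}_g(\mathbb{F}_q)]$ and apply orbit--stabilizer to the action of $\mathrm{Aut}_{\mathbb{F}_q}(A,\lambda)$ on $A(\mathbb{F}_q)^n$ to collapse the inner sum to $\#A(\mathbb{F}_q)^n/\#\mathrm{Aut}_{\mathbb{F}_q}(A,\lambda)$. The paper's writeup is slightly terser (it introduces the action groupoid $A(\mathbb{F}_q)^n/\!/\mathrm{Aut}_{\mathbb{F}_q}(A,\lambda)$ directly without the explicit discussion of the $2$-fiber-product bookkeeping you flag at the end), but the substance is identical.
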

	\begin{proof}
	Let \([A,\lambda]\in[\mathcal{A}_g(\mathbb{F}_q)]\) and consider the action of \(\mathrm{Aut}_{\mathbb{F}_q}(A,\lambda)\) on \(A^n\). Consider the action groupoid \([A(\mathbb{F}_q)^n]:=A(\mathbb{F}_q)^n/\!/\mathrm{Aut}_{\mathbb{F}_q}(A,\lambda)\). For \(\underline{x}\in A(\mathbb{F}_q)^n\) let \(\mathrm{Aut}_{\mathbb{F}_q}(A,\lambda;\underline{x})\subseteq\mathrm{Aut}_{\mathbb{F}_q}(A,\lambda)\) be the subgroup stabilizing \(\underline{x}\), and let \(\mathrm{Aut}_{\mathbb{F}_q}(A,\lambda)\cdot\underline{x}\) be the \(\mathrm{Aut}_{\mathbb{F}_q}(A,\lambda)\)-orbit of \(\underline{x}\). By the orbit-stabilizer theorem we have
	\begin{align*}
		\sum_{[\underline{x}]\in[A(\mathbb{F}_q)^n]}\frac{1}{\#\mathrm{Aut}(A,\lambda;\underline{x})}=\sum_{[\underline{x}]\in[A(\mathbb{F}_q)^n]}\frac{\#(\mathrm{Aut}_{\mathbb{F}_q}(A,\lambda)\cdot\underline{x})}{\#\mathrm{Aut}_{\mathbb{F}_q}(A,\lambda)}=\frac{\#A(\mathbb{F}_q)^n}{\#\mathrm{Aut}_{\mathbb{F}_q}(A,\lambda)}
	\end{align*}
	It follows that
	\begin{align*}
		\mathbb{E}(\#A_g(\mathbb{F}_q)^n) &= \sum_{[A,\lambda]\in[\mathcal{A}_g(\mathbb{F}_q)]}\frac{\#A(\mathbb{F}_q)^n}{\#\mathcal{A}_g(\mathbb{F}_q)\#\mathrm{Aut}_{\mathbb{F}_q}(A,\lambda)}\\
			&= \frac{1}{\#\mathcal{A}_g(\mathbb{F}_q)}\sum_{[A,\lambda]\in[\mathcal{A}_g(\mathbb{F}_q)]}\sum_{[\underline{x}]\in[A(\mathbb{F}_q)^n]}\frac{1}{\#\mathrm{Aut}_{\mathbb{F}_q}(A,\lambda;\underline{x})}\\
			&= \frac{1}{\#\mathcal{A}_g(\mathbb{F}_q)}\sum_{[A,\lambda;\underline{x}]\in[\mathcal{X}^{\times n}_g(\mathbb{F}_q)]}\frac{1}{\#\mathrm{Aut}_{\mathbb{F}_q}(A,\lambda;\underline{x})}=\frac{\#\mathcal{X}^{\times n}_g(\mathbb{F}_q)}{\#\mathcal{A}_g(\mathbb{F}_q)} \qedhere
	\end{align*}
	\end{proof}
	
	We will consider the moment generating function 
	\begin{align*}
		M_{\#A_g(\mathbb{F}_q)}(t):=\sum_{n\geq 0}\mathbb{E}(\#A_g(\mathbb{F}_q)^n)\frac{t^n}{n!}=\sum_{n\geq 0}\frac{\#\mathcal{X}^{\times n}_g(\mathbb{F}_q)}{\#\mathcal{A}_g(\mathbb{F}_q)}\frac{t^n}{n!}
	\end{align*}
	and we will consider the following normalization of the moment generating function
	\begin{align*}
		\widetilde{M}_{\#A_g(\mathbb{F}_q)}(t):=M_{\#A_g(\mathbb{F}_q)}(q^{-g}t)=\sum_{n\geq 0}q^{-ng}\frac{\#\mathcal{X}^{\times n}_g(\mathbb{F}_q)}{\#\mathcal{A}_g(\mathbb{F}_q)}\frac{t^n}{n!}
	\end{align*}
	which behaves better in the limit \(g\rightarrow\infty\). 
	
\paragraph{Grothendieck-Lefschetz Trace Formula}

	Now let \(\mathcal{X}\) be a Deligne-Mumford stack of finite type over \(\mathbb{F}_q\), and fix a prime \(\ell\) not dividing \(q\). For \(\mathbb{V}\) an \'etale \(\mathbb{Q}_\ell\)-sheaf on \(\mathcal{X}\) along with a choice of \(\mathbb{Z}_\ell\)-lattice \(\mathbb{V}_0\) write \(H^i(\mathcal{X},\mathbb{V})\) for the \(\ell\)-adic \'etale cohomology \(H^i_\mathrm{et}(\mathcal{X}_{\overline{\mathbb{F}}_q},\mathbb{V})=\varprojlim_nH^i_\mathrm{et}(\mathcal{X}_{\overline{\mathbb{F}}_q},\mathbb{V}_0/\ell^n)\otimes_{\mathbb{Z}_\ell}\mathbb{Q}_\ell\) and write \(\phi_q:H^i(\mathcal{X},\mathbb{V})\rightarrow H^i(\mathcal{X},\mathbb{V})\) for the arithmetic Frobenius. Similarly, write \(H^i_\mathrm{c}(\mathcal{X},\mathbb{V})\) for the compactly supported \(\ell\)-adic \'etale cohomology \(H^i_\mathrm{c}(\mathcal{X}_{\overline{\mathbb{F}}_q},\mathbb{V})=\varprojlim_nH^i_\mathrm{c,et}(\mathcal{X}_{\overline{\mathbb{F}}_q},\mathbb{V}_0/\ell^n)\otimes_{\mathbb{Z}_\ell}\mathbb{Q}_\ell\) and write \(\mathrm{Frob}_q:H^i(\mathcal{X},\mathbb{V})\rightarrow H^i(\mathcal{X},\mathbb{V})\) for the geometric Frobenius.
	
	When \(\mathcal{X}\) is smooth and has constant dimension the groupoid cardinality \(\#\mathcal{X}(\mathbb{F}_q)\) can be computed by a Grothendieck-Lefschetz trace formula as the alternating sum of traces of arithmetic (geometric) Frobenius on the (compactly supported) \(\ell\)-adic cohomology of \(\mathcal{X}\): 
	
	\begin{proposition}
	\label{GrothendieckLefschetz} Let \(\mathcal{X}\) be a smooth Deligne-Mumford stack of finite type and constant dimension \(d\) over \(\mathbb{F}_q\). Then we have
	\begin{align*}
		\#\mathcal{X}(\mathbb{F}_q)=q^d\sum_{i\geq 0}(-1)^i\mathrm{tr}(\phi_q|H^i(\mathcal{X},\mathbb{Q}_\ell))=\sum_{i\geq 0}(-1)^i\mathrm{tr}(\mathrm{Frob}_q|H^i_\mathrm{c}(\mathcal{X},\mathbb{Q}_\ell))
	\end{align*}
	\end{proposition}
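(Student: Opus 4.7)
The plan is to establish the two equalities separately. For the second equality, relating the groupoid cardinality $\#\mathcal{X}(\mathbb{F}_q)$ to the alternating sum of geometric Frobenius traces on compactly supported cohomology, I would invoke Behrend's extension of the Grothendieck-Lefschetz trace formula from separated schemes of finite type to Deligne-Mumford stacks of finite type. I would prove this by choosing an \'etale presentation $U_\bullet \rightarrow \mathcal{X}$ by a simplicial scheme (for instance the \v{C}ech nerve of an \'etale atlas $U \rightarrow \mathcal{X}$), recalling that $H^*_\mathrm{c}(\mathcal{X}, \mathbb{Q}_\ell)$ is computed by the simplicial spectral sequence with $E_1^{p,q} = H^q_\mathrm{c}(U_p, \mathbb{Q}_\ell)$, and verifying by a direct orbit-stabilizer count that the weighted groupoid cardinality $\#\mathcal{X}(\mathbb{F}_q) = \sum_{x \in [\mathcal{X}(\mathbb{F}_q)]} 1/\#\mathrm{Aut}(x)$ is computed by the corresponding alternating sum of $\#U_p(\mathbb{F}_q)$. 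Applying the classical Grothendieck-Lefschetz trace formula to each scheme $U_p$ and comparing the two alternating sums then yields the result.

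For the first equality I would appeal to Poincar\'e duality for smooth Deligne-Mumford stacks of constant dimension $d$ over $\mathbb{F}_q$, which yields a perfect Galois-equivariant pairing
\begin{align*}
H^i_\mathrm{c}(\mathcal{X}, \mathbb{Q}_\ell) \times H^{2d-i}(\mathcal{X}, \mathbb{Q}_\ell) \longrightarrow \mathbb{Q}_\ell(-d),
\end{align*}
identifying $H^i_\mathrm{c}(\mathcal{X}, \mathbb{Q}_\ell) \cong H^{2d-i}(\mathcal{X}, \mathbb{Q}_\ell)^\vee(-d)$ as $\mathrm{Gal}(\overline{\mathbb{F}}_q/\mathbb{F}_q)$-representations. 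If $\{\alpha_j\}$ are the eigenvalues of geometric Frobenius on $H^{2d-i}(\mathcal{X}, \mathbb{Q}_\ell)$, then the eigenvalues of geometric Frobenius on $H^i_\mathrm{c}(\mathcal{X}, \mathbb{Q}_\ell)$ are $\{q^d/\alpha_j\}$. Since arithmetic Frobenius is inverse to geometric Frobenius, the eigenvalues of $\phi_q$ on $H^{2d-i}(\mathcal{X}, \mathbb{Q}_\ell)$ are $\{\alpha_j^{-1}\}$, and one obtains
\begin{align*}
\mathrm{tr}(\mathrm{Frob}_q \mid H^i_\mathrm{c}(\mathcal{X}, \mathbb{Q}_\ell)) = q^d \cdot \mathrm{tr}(\phi_q \mid H^{2d-i}(\mathcal{X}, \mathbb{Q}_\ell)).
\end{align*}
Taking the alternating sum over $i$ and reindexing $j = 2d - i$ (using $(-1)^{2d-j} = (-1)^j$) then produces the first equality.

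The main obstacle is the first step: although Behrend's trace formula is by now classical, the bookkeeping required to translate between the weighted groupoid cardinality and the simplicial Frobenius trace demands some care, particularly in checking that isomorphism classes with nontrivial automorphism groups contribute with their correct weights after passing through the simplicial resolution. Once this is in hand, the passage to the first equality is a purely formal manipulation of Frobenius eigenvalues via Poincar\'e duality, and smoothness of $\mathcal{X}$ is used precisely to guarantee that Poincar\'e duality holds in the required form.
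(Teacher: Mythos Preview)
Your approach is correct and essentially the same as the paper's: both arguments combine Behrend's Lefschetz trace formula for Deligne--Mumford stacks with Poincar\'e duality for smooth stacks of pure dimension. The only cosmetic difference is the order of application---the paper cites Behrend's result in the form yielding the first equality directly (arithmetic Frobenius on ordinary cohomology, with the $q^d$ factor) and then deduces the second from Poincar\'e duality, whereas you establish the equality between $\#\mathcal{X}(\mathbb{F}_q)$ and the compactly-supported Frobenius trace first and then recover the middle expression via duality.
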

	\begin{proof}
	The first equality follows by \cite[Theorem 2.4.5]{BehrendThesis}, noting that the \'etale cohomology of Deligne-Mumford stacks agrees with the smooth cohomology used in this theorem. The second equality follows by Poincare duality (see \cite[Proposition 2.30]{Zheng} for the case of Deligne-Mumford stacks), noting that \(q^d\mathrm{tr}(\phi_q|H^i(\mathcal{X},\mathbb{Q}_\ell))=\mathrm{tr}(\mathrm{Frob}_q|H^{2d-i}_\mathrm{c}(\mathcal{X},\mathbb{Q}_\ell))\). 
	\end{proof}
	
	It follows that we have
	\begin{align*}
		\mathbb{E}(\#A_g(\mathbb{F}_q)^n)=\frac{\mathrm{tr}(\mathrm{Frob}_q|e_\mathrm{c}(\mathcal{X}^{\times n}_g,\mathbb{Q}_\ell))}{\mathrm{tr}(\mathrm{Frob}_q|e_\mathrm{c}(\mathcal{A}_g,\mathbb{Q}_\ell))}:=\frac{\sum_{i\geq 0}(-1)^i\mathrm{tr}(\mathrm{Frob}_q|H^i_\mathrm{c}(\mathcal{X}^{\times n}_g,\mathbb{Q}_\ell))}{\sum_{i\geq 0}(-1)^i\mathrm{tr}(\mathrm{Frob}_q|H^i_\mathrm{c}(\mathcal{A}_g,\mathbb{Q}_\ell))}
	\end{align*}
	It remains to compute the Euler characteristics \(e(\mathcal{X}^{\times n}_g,\mathbb{Q}_\ell):=\sum_{i\geq 0}(-1)^iH^i(\mathcal{X}^{\times n}_g,\mathbb{Q}_\ell)\), or Poincare dually the compactly supported Euler characteristics \(e_\mathrm{c}(\mathcal{X}^{\times n}_g,\mathbb{Q}_\ell):=\sum_{i\geq 0}(-1)^iH^i_\mathrm{c}(\mathcal{X}^{\times n}_g,\mathbb{Q}_\ell)\), as elements of the Grothendieck group of \(\ell\)-adic Galois representations. 
	
\paragraph{Leray Spectral Sequence} 

	Now we would like to compute the cohomology of \(\mathcal{X}^{\times n}_g\) in terms of the cohomology of local systems on \(\mathcal{A}_g\). We observe that the Leray spectral sequence for the morphism \(\pi^n:\mathcal{X}^{\times n}_g\rightarrow\mathcal{A}_g\) degenerates at the \(E_2\)-page, as it does for smooth projective morphisms of schemes: 

	\begin{proposition}
	\label{Leray} (Compare to \cite[Proposition 2.8]{Lee}) We have a spectral sequence
	\begin{align*}
		E^{i,j}_2=H^i(\mathcal{A}_g,\mathbb{R}^j\pi^n_*\mathbb{Q}_\ell)\Rightarrow H^{i+j}(\mathcal{X}^{\times n}_g,\mathbb{Q}_\ell)
	\end{align*}
	which degenerates at the \(E_2\)-page, and we have a spectral sequence
	\begin{align*}
		E^{i,j}_2=H^i_\mathrm{c}(\mathcal{A}_g,\mathbb{R}^j\pi^n_*\mathbb{Q}_\ell)\Rightarrow H^{i+j}_\mathrm{c}(\mathcal{X}^{\times n}_g,\mathbb{Q}_\ell)
	\end{align*}
	which degenerates at the \(E_2\)-page.
	\end{proposition}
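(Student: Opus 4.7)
The plan is to identify $\pi^n$ as a smooth proper morphism of Deligne--Mumford stacks and then invoke Deligne's degeneration theorem for smooth proper morphisms, mirroring the proof of \cite[Proposition 2.8]{Lee}.

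First I would verify that $\pi^n:\mathcal{X}^{\times n}_g\rightarrow\mathcal{A}_g$ is smooth and proper. The universal family $\pi:\mathcal{X}_g\rightarrow\mathcal{A}_g$ is smooth and proper, since its geometric fibers are abelian varieties. Both smoothness and properness are preserved under base change and composition, so the iterated fiber product $\pi^n$, realized as the composition of successive base changes of $\pi$, inherits both properties.

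Next I would apply Deligne's decomposition theorem for smooth proper morphisms: for such $\pi^n$, the derived pushforward splits as
\begin{align*}
R\pi^n_*\mathbb{Q}_\ell \simeq \bigoplus_{j\geq 0} R^j\pi^n_*\mathbb{Q}_\ell\,[-j]
\end{align*}
in the derived category of $\ell$-adic sheaves on $\mathcal{A}_g$. The key input is the relative hard Lefschetz theorem, for which a relatively ample class on $\mathcal{X}^{\times n}_g$ is furnished by the principal polarization pulled back along each of the $n$ projections. Given such a splitting, the Leray spectral sequence of any cohomological functor applied to $R\pi^n_*\mathbb{Q}_\ell$ degenerates at $E_2$, yielding the first statement. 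For the compactly supported version, since $\pi^n$ is proper we have $R\pi^n_!=R\pi^n_*$, and applying $H^*_\mathrm{c}(\mathcal{A}_g,-)$ to the same decomposition produces the degeneration of the compactly supported Leray spectral sequence.

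The main technical obstacle is transferring Deligne's decomposition theorem from the scheme setting to that of Deligne--Mumford stacks. This can be handled by choosing an étale atlas $U\rightarrow\mathcal{A}_g$, pulling back to obtain a smooth proper morphism of schemes to which Deligne's theorem applies directly, and descending the decomposition along étale cohomological descent; alternatively, one can observe that Deligne's argument is local on the base and depends only on the relative hard Lefschetz theorem for the smooth proper fibers, so the classical statement extends verbatim to smooth proper morphisms of Deligne--Mumford stacks.
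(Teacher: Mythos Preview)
Your proposal is correct and follows the same underlying idea as the paper: reduce to Deligne's degeneration theorem for smooth proper morphisms and handle the passage from schemes to Deligne--Mumford stacks. The difference lies in how that passage is implemented. You suggest pulling back along a generic \'etale atlas and descending, or arguing locality of Deligne's relative hard Lefschetz argument. The paper instead chooses a specific finite \'etale Galois cover, namely the level-$N$ moduli $\mathcal{A}_g[N]\rightarrow\mathcal{A}_g$ for $N\geq 3$, which is a genuine quasi-projective scheme; Deligne's theorem then applies verbatim to $\mathcal{X}_g[N]^{\times n}\rightarrow\mathcal{A}_g[N]$, and one recovers the stacky statement by taking $\mathrm{Sp}_{2g}(\mathbb{Z}/N\mathbb{Z})$-invariants via the Hochschild--Serre spectral sequence (which collapses since the group is finite and the coefficients are $\mathbb{Q}_\ell$). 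The paper's route has the advantage of being entirely concrete and avoiding any appeal to a stacky version of the decomposition theorem or to descent for derived-category splittings; your route is more conceptual but leaves the descent step somewhat informal. Either is acceptable, and your atlas suggestion is essentially a less explicit version of the paper's level-structure trick.
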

	\begin{proof}
	Let \(N\geq 3\) and let \(\mathcal{A}_g[N]\) be the moduli stack of principally polarized Abelian varieties of dimension \(g\) with full level \(N\) structure, which is a smooth quasi-projective scheme over \(\mathbb{Z}[\frac{1}{N}]\) (and hence over \(\mathbb{Q}\) or over any \(\mathbb{F}_q\) for \(q=p^k\) with \(p\nmid N\) by base change). Let \(\pi:\mathcal{X}_g[N]\rightarrow\mathcal{A}_g[N]\) be the universal family of Abelian varieties over \(\mathcal{A}_g[N]\). For \(n\geq 1\) consider the \(n\)-th fiber power of the universal family 
	\begin{align*}
		\pi^n:\mathcal{X}_g[N]^{\times n}:=\underbrace{\mathcal{X}_g[N]\times_{\mathcal{A}_g}\hdots\times_{\mathcal{A}_g}\mathcal{X}_g[N]}_n\rightarrow\mathcal{A}_g[N]
	\end{align*}
	which is a smooth quasi-projective scheme over \(\mathbb{Z}[\frac{1}{N}]\) (and hence over \(\mathbb{Q}\) or over any \(\mathbb{F}_q\) for \(q=p^k\) with \(p\nmid N\) by base change). Since \(\pi^n:\mathcal{X}_g[N]^{\times n}\rightarrow\mathcal{A}_g[N]\) is a smooth projective morphism, the Leray spectral sequence
	\begin{align*}
		E^{i,j}_2=H^i(\mathcal{A}_g[N],\mathbb{R}^j\pi^n_*\mathbb{Q}_\ell)\Rightarrow H^{i+j}(\mathcal{X}_g[N]^{\times n},\mathbb{Q}_\ell)
	\end{align*}
	degenerates at the \(E_2\)-page (see for example \cite[Proposition 2.4]{DeligneThesis} and \cite[Theorem 4.1.1]{DeligneWeilII}), so we have an isomorphism
	\begin{align*}
		\bigoplus_{i+j=k}H^i(\mathcal{A}_g[N],\mathbb{R}^j\pi^n_*\mathbb{Q}_\ell)\simeq H^k(\mathcal{X}_g[N]^{\times n},\mathbb{Q}_\ell)
	\end{align*}
	of \(\ell\)-adic Galois representations up to semisimplification. Now by the Hochschild-Serre spectral sequence \cite[Theorem 2.20]{Milne} for the \(\mathrm{Sp}_{2g}(\mathbb{Z}/N\mathbb{Z})\)-quotient \(\mathcal{A}_g[N]\rightarrow\mathcal{A}_g\) we have
	\begin{align*}
		H^i(\mathcal{A}_g[N],\mathbb{R}^j\pi^n_*\mathbb{Q}_\ell)^{\mathrm{Sp}_{2g}(\mathbb{Z}/N\mathbb{Z})}\simeq H^i(\mathcal{A}_g,\mathbb{R}^j\pi^n_*\mathbb{Q}_\ell)
	\end{align*}
	and by the Hochschild-Serre spectral sequence for the \(\mathrm{Sp}_{2g}(\mathbb{Z}/N\mathbb{Z})\)-quotient \(\mathcal{X}_g[N]^{\times n}\rightarrow\mathcal{X}^{\times n}_g\) (with \(\mathrm{Sp}_{2g}(\mathbb{Z}/N\mathbb{Z})\) acting diagonally) we have 
	\begin{align*}
		\bigoplus_{i+j=k}H^i(\mathcal{A}_g[N],\mathbb{R}^j\pi^n_*\mathbb{Q}_\ell)^{\mathrm{Sp}_{2g}(\mathbb{Z}/N\mathbb{Z})}\simeq H^k(\mathcal{X}_g[N]^{\times n},\mathbb{Q}_\ell)^{\mathrm{Sp}_{2g}(\mathbb{Z}/N\mathbb{Z})}\simeq H^k(\mathcal{X}^{\times n}_g,\mathbb{Q}_\ell)
	\end{align*}
	so by naturality of the Leray spectral sequence we can take \(\mathrm{Sp}_{2g}(\mathbb{Z}/N\mathbb{Z})\)-invariants and it follows that the Leray spectral sequence
	\begin{align*}
		E^{i,j}_2=H^i(\mathcal{A}_g,\mathbb{R}^j\pi^n_*\mathbb{Q}_\ell)\Rightarrow H^{i+j}(\mathcal{X}^{\times n}_g,\mathbb{Q}_\ell)
	\end{align*}
	degenerates at the \(E_2\)-page. The proof for the Leray spectral sequence for compactly supported cohomology is similar, and follows by Poincare duality, noting that \(\mathbb{R}^j\pi^n_!\mathbb{Q}_\ell\simeq\mathbb{R}^j\pi^n_*\mathbb{Q}_\ell\) since \(\pi^n\) is proper. 
	\end{proof}

	\begin{corollary}
	\label{EulerChar} We have
	\begin{align*}
		e(\mathcal{X}^{\times n}_g,\mathbb{Q}_\ell)=\sum_{j\geq 0}(-1)^je(\mathcal{A}_g,\mathbb{R}^j\pi^n_*\mathbb{Q}_\ell)
	\end{align*}
	and we have 
	\begin{align*}
		e_\mathrm{c}(\mathcal{X}^{\times n}_g,\mathbb{Q}_\ell)=\sum_{j\geq 0}(-1)^je_\mathrm{c}(\mathcal{A}_g,\mathbb{R}^j\pi^n_*\mathbb{Q}_\ell)
	\end{align*}
	as an element of the Grothendieck group of \(\ell\)-adic Galois representations.  
	\end{corollary}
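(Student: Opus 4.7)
The plan is to deduce the corollary directly from the degeneration of the Leray spectral sequence established in Proposition \ref{Leray}. Since the spectral sequence $E^{i,j}_2 = H^i(\mathcal{A}_g, \mathbb{R}^j\pi^n_*\mathbb{Q}_\ell) \Rightarrow H^{i+j}(\mathcal{X}^{\times n}_g, \mathbb{Q}_\ell)$ degenerates at the $E_2$-page, for each $k \geq 0$ we obtain an identity
\begin{align*}
	[H^k(\mathcal{X}^{\times n}_g, \mathbb{Q}_\ell)] = \sum_{i+j=k} [H^i(\mathcal{A}_g, \mathbb{R}^j\pi^n_*\mathbb{Q}_\ell)]
\end{align*}
in the Grothendieck group of $\ell$-adic Galois representations (the degeneration gives this up to semisimplification, which is exactly what is needed for an identity in the Grothendieck group).

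Next I would take the alternating sum over $k$ on both sides and swap the order of summation. Writing $k=i+j$ and using $(-1)^{i+j}=(-1)^i(-1)^j$, the double sum factors as
\begin{align*}
	e(\mathcal{X}^{\times n}_g, \mathbb{Q}_\ell) = \sum_{k \geq 0} (-1)^k [H^k(\mathcal{X}^{\times n}_g, \mathbb{Q}_\ell)] = \sum_{j \geq 0} (-1)^j \sum_{i \geq 0} (-1)^i [H^i(\mathcal{A}_g, \mathbb{R}^j\pi^n_*\mathbb{Q}_\ell)],
\end{align*}
and the inner sum is by definition $e(\mathcal{A}_g, \mathbb{R}^j\pi^n_*\mathbb{Q}_\ell)$, yielding the first formula. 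The compactly supported version is proved identically using the second spectral sequence in Proposition \ref{Leray}.

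There is no real obstacle here: once the Leray degeneration has been established, the corollary is a purely formal bookkeeping argument in the Grothendieck group. The only subtlety worth flagging is that the spectral sequence degeneration only gives isomorphisms of the associated graded, but since we are working in the Grothendieck group of $\ell$-adic Galois representations (which already identifies extensions with their semisimplifications), this is sufficient. One should also check that the sums are finite, which follows from the finite-dimensionality of $H^i_{(\mathrm{c})}(\mathcal{A}_g, \mathbb{R}^j\pi^n_*\mathbb{Q}_\ell)$ and the fact that only finitely many of these groups are nonzero, since $\mathcal{A}_g$ has finite cohomological dimension and $\mathbb{R}^j\pi^n_*\mathbb{Q}_\ell = 0$ for $j > 2ng$.
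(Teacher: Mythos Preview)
Your proposal is correct and matches the paper's approach: the corollary is stated immediately after Proposition~\ref{Leray} with no separate proof, so the intended argument is exactly the formal alternating-sum manipulation you describe. One minor remark: the Euler-characteristic identity in the Grothendieck group would follow from the mere existence of a convergent first-quadrant spectral sequence (since passing from $E_r$ to $E_{r+1}$ preserves alternating sums), so degeneration is not strictly needed here, but invoking it is harmless and consistent with the paper's presentation.
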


\paragraph{K\"unneth Formula}

	We can make one further simplification by using the K\"unneth formula to express the \(\ell\)-adic sheaves \(\mathbb{R}^j\pi^n_*\mathbb{Q}_\ell\) in terms of the \(\ell\)-adic sheaves \(\mathbb{R}^j\pi_*\mathbb{Q}_\ell\): 
	
	\begin{proposition}
	\label{Kunneth} We have an isomorphism
	\begin{align*}
		\mathbb{R}^j\pi^n_*\mathbb{Q}_\ell\simeq\bigoplus_{\substack{\lambda\vdash j\\ \lambda=(1^{j_1}\hdots n^{j_n})}}\bigotimes_{1\leq i\leq n}\wedge^{j_i}\mathbb{V}
	\end{align*}
	where \(\mathbb{V}=\mathbb{R}^1\pi_*\mathbb{Q}_\ell\) is the \(\ell\)-adic local system on \(\mathcal{A}_g\) whose fiber over \([A,\lambda]\in\mathcal{A}_g\) is \(H^1(A,\mathbb{Q}_\ell)\) corresponding to the standard representation of \(\mathrm{Sp}_{2g}\). 
	\end{proposition}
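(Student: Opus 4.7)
The plan is to combine two well-known ingredients: the relative Künneth formula for the iterated fiber product $\pi^n:\mathcal{X}^{\times n}_g\rightarrow\mathcal{A}_g$, and the classical identification of the cohomology ring of an Abelian variety with an exterior algebra on its first cohomology.

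First I would establish the relative Künneth isomorphism
\begin{align*}
\mathbb{R}^j\pi^n_*\mathbb{Q}_\ell\simeq\bigoplus_{j_1+\hdots+j_n=j}\bigotimes_{1\leq i\leq n}\mathbb{R}^{j_i}\pi_*\mathbb{Q}_\ell
\end{align*}
where the direct sum is indexed by the tuples of non-negative integers $(j_1,\hdots,j_n)$ summing to $j$ (the indexing set parameterized in the statement by $\lambda=(1^{j_1}\hdots n^{j_n})$). Because $\pi:\mathcal{X}_g\rightarrow\mathcal{A}_g$ is smooth and proper with connected fibers of dimension $g$, the higher direct images $\mathbb{R}^j\pi_*\mathbb{Q}_\ell$ are $\mathbb{Q}_\ell$-local systems of finite rank on $\mathcal{A}_g$, and proper base change identifies the stalk of $\mathbb{R}^{j_i}\pi_*\mathbb{Q}_\ell$ at a geometric point $[A,\lambda]$ with $H^{j_i}(A,\mathbb{Q}_\ell)$. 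To handle the stacky setting I would pass to the level cover $\mathcal{A}_g[N]$ for some $N\geq 3$, where the Künneth formula for smooth proper morphisms of schemes applies to $\mathcal{X}_g[N]^{\times n}\rightarrow\mathcal{A}_g[N]$, and then descend by taking $\mathrm{Sp}_{2g}(\mathbb{Z}/N\mathbb{Z})$-invariants, exactly as in \ref{Leray}.

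Next I would identify $\mathbb{R}^k\pi_*\mathbb{Q}_\ell\simeq\wedge^k\mathbb{V}$ as $\ell$-adic local systems on $\mathcal{A}_g$. Cup product in the universal family furnishes a natural map of local systems
\begin{align*}
\wedge^k\mathbb{V}=\wedge^k(\mathbb{R}^1\pi_*\mathbb{Q}_\ell)\rightarrow\mathbb{R}^k\pi_*\mathbb{Q}_\ell,
\end{align*}
and this map is an isomorphism fiberwise: the stalk computation reduces to the classical statement that for an Abelian variety $A$ of dimension $g$ the cup product induces an isomorphism $\wedge^k H^1(A,\mathbb{Q}_\ell)\simeq H^k(A,\mathbb{Q}_\ell)$, which follows from $A$ being a $K(\pi,1)$ for its topological fundamental group together with the fact that $H^*(A,\mathbb{Q}_\ell)$ is freely generated as a graded-commutative ring by $H^1$. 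A map of $\mathbb{Q}_\ell$-local systems which is an isomorphism on every geometric stalk is an isomorphism, so we obtain $\mathbb{R}^k\pi_*\mathbb{Q}_\ell\simeq\wedge^k\mathbb{V}$. Substituting this into the Künneth isomorphism yields the formula in the statement.

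I expect the main subtlety to be the first step: verifying that the relative Künneth formula holds in the Deligne-Mumford setting with the correct $\mathrm{Gal}(\overline{\mathbb{F}}_q/\mathbb{F}_q)$-equivariant structure. The level cover trick handles this cleanly because $\mathcal{A}_g[N]$ is a smooth quasi-projective scheme for $N\geq 3$ and the fiber products $\mathcal{X}_g[N]^{\times n}$ remain smooth projective over it, so the standard Künneth theorem applies and Hochschild-Serre descent preserves the tensor decomposition. The second step is essentially a pointwise check once the exterior algebra structure on $H^*(A,\mathbb{Q}_\ell)$ is granted.
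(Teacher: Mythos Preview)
Your proposal is correct and follows essentially the same two-step approach as the paper: relative K\"unneth for the fiber product, then the exterior algebra identification $\mathbb{R}^k\pi_*\mathbb{Q}_\ell\simeq\wedge^k\mathbb{V}$ via the fiberwise isomorphism $H^k(A,\mathbb{Q}_\ell)\simeq\wedge^k H^1(A,\mathbb{Q}_\ell)$. The only difference is in how the stacky K\"unneth is justified: the paper cites Zheng's K\"unneth formula for Deligne--Mumford stacks directly and runs an induction on $n$, whereas you pass to the level cover $\mathcal{A}_g[N]$ and descend---a perfectly valid alternative that trades one reference for another, but not a genuinely different argument.
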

	\begin{proof}
	By the K\"unneth formula (see \cite[Corollary 2.20]{Zheng} for the case of Deligne-Mumford stacks) we have have an isomorphism \(\mathbb{R}^j\pi^n_*\mathbb{Q}_\ell\simeq\bigoplus_{j_1+j_2=j}(\mathbb{R}^{j_1}\pi^{n-1}_*\mathbb{Q}_\ell)\otimes(\mathbb{R}^{j_2}\pi_*\mathbb{Q}_\ell)\), so by induction on \(n\) it follows that
	\begin{align*}
		\mathbb{R}^j\pi^n_*\mathbb{Q}_\ell\simeq\bigoplus_{\substack{\lambda\vdash j\\ \lambda=(1^{j_1}\hdots n^{j_n})}}\bigotimes_{1\leq i\leq n}\mathbb{R}^{j_i}\pi_*\mathbb{Q}_\ell
	\end{align*}
	Now the result follows since \(\mathbb{R}^j\pi_*\mathbb{Q}_\ell\simeq\wedge^j\mathbb{V}\) is the \(\ell\)-adic local sytem on \(\mathcal{A}_g\) whose fiber over \([A,\lambda]\in\mathcal{A}_g\) is \(H^j(A,\mathbb{Q}_\ell)\simeq\wedge^jH^1(A,\mathbb{Q}_\ell)\). 
	\end{proof}
	
	For \(\lambda=(\lambda_1\geq\hdots\geq\lambda_g\geq 0)\) a highest weight for \(\mathrm{Sp}_{2g}\) let \(\mathbb{V}_\lambda\) be the \(\ell\)-adic local system on \(\mathcal{A}_g\) occurring in \(\mathrm{Sym}^{\lambda_1-\lambda_2}(\mathbb{V})\otimes\hdots\otimes\mathrm{Sym}^{\lambda_{g-1}-\lambda_g}(\wedge^{g-1}\mathbb{V})\otimes\mathrm{Sym}^{\lambda_g}(\wedge^g\mathbb{V})\) corresponding to the irreducible highest weight representation \(V_\lambda\) of \(\mathrm{Sp}_{2g}\). The tensor product of highest weight representations decomposes as a direct sum of highest weight representations with multiplicities
	\begin{align*}
		\mathbb{V}_\lambda\otimes\mathbb{V}_{\lambda'}\simeq\bigoplus_{\lambda''}m_{\lambda,\lambda',\lambda''}\mathbb{V}_{\lambda''}
	\end{align*}
	where the multiplicities \(m_{\lambda,\lambda',\lambda''}\) can be computed in terms of Littlewood-Richardson coefficients and the image of the specialization morphism from the universal character ring (see \cite[Theorem 3.1]{Koike} and \cite[Section 2.2]{KoikeTerada}, though we will not use this description in later computations).
	
	It follows that we have a decomposition 
	\begin{align*}
		\mathbb{R}^j\pi^n_*\mathbb{Q}_\ell\simeq\bigoplus_\lambda\mathbb{V}_\lambda(\tfrac{|\lambda|-j}{2})^{\oplus m^{j,n}_\lambda}
	\end{align*}
	where the \(\mathbb{V}_\lambda\) are irreducible \(\ell\)-adic local systems on \(\mathcal{A}_g\) with multiplicity \(m^{j,n}_\lambda\geq 0\) determined by Newell-Littlewood numbers, and where \(|\lambda|=\lambda_1+\hdots+\lambda_g\). Then we have 
	\begin{align*}
		e_\mathrm{c}(\mathcal{X}^{\times n}_g,\mathbb{Q}_\ell)=\sum_{j\geq 0}(-1)^j\sum_\lambda m^{j,n}_\lambda e_\mathrm{c}(\mathcal{A}_g,\mathbb{V}_\lambda)(\tfrac{|\lambda|-j}{2})=\sum_\lambda f^n_\lambda(\mathbb{L})e_\mathrm{c}(\mathcal{A}_g,\mathbb{V}_\lambda)
	\end{align*}
	as elements of the Grothendieck group of \(\ell\)-adic Galois representations, where \(f^n_\lambda(\mathbb{L})=\sum_{j\geq 0}(-1)^jm^{j,n}_\lambda\mathbb{L}^{\frac{j-|\lambda|}{2}}\) is a polynomial in the Lefschetz motive \(\mathbb{L}=\mathbb{Q}_\ell(-1)\), in which case by applying the Grothendieck-Lefschetz trace formula we obtain
	\begin{align*}
		\mathbb{E}(\#A(\mathbb{F}_q)^n)=\frac{\sum_\lambda\mathrm{tr}(\mathrm{Frob}_q|f^n_\lambda(\mathbb{L})e_\mathrm{c}(\mathcal{A}_g,\mathbb{V}_\lambda))}{\mathrm{tr}(\mathrm{Frob}_q|e_\mathrm{c}(\mathcal{A}_g,\mathbb{Q}_\ell))}=\frac{\sum_\lambda f^n_\lambda(q)\mathrm{tr}(\mathrm{Frob}_q|e_\mathrm{c}(\mathcal{A}_g,\mathbb{V}_\lambda))}{\mathrm{tr}(\mathrm{Frob}_q|e_\mathrm{c}(\mathcal{A}_g,\mathbb{Q}_\ell))}
	\end{align*}
	We have reduced the problem of computing the moments \(E(\#A(\mathbb{F}_q)^n)\) to the problem of computing the multiplicities \(m^{j,n}_\lambda\), and to the problem of computing the Euler characteristics \(e_\mathrm{c}(\mathcal{A}_g,\mathbb{V}_\lambda)\) as elements of the Grothendieck group of \(\ell\)-adic Galois representations. The first problem is straightforward, although it is perhaps not so easy to produce clean expressions for these multiplicities except for small \(g\). The second problem is more difficult: explicit computations are only known for \(g=1\) by results of Eichler-Shimura, for \(g=2\) by results of Lee-Weintraub \cite{LeeWeintraub} and Petersen \cite{Petersen}, and for \(g=3\) by results of Hain \cite{Hain} and conjectures of Bergstro\"m-Faber-van der Geer \cite{BFvdG3}. We will summarize these computations at the end of the paper. 

\section{Conjectures on Point Counts as $g\rightarrow\infty$}

	We now consider the asymptotics of the distributions of the point counts \(\#A_g(\mathbb{F}_q)\) in the limit \(g\rightarrow\infty\). Following the strategy of \cite{AEKWB}, we pose the following conjecture: 
	
	\begin{conjecture}
	\label{Conjecture1} (compare to \cite[Conjecture 1]{AEKWB}) Let \(\lambda=1+\frac{1}{q}+\frac{1}{q(q-1)}=\frac{1}{1-q^{-1}}\). For all \(n\geq 1\) we have
	\begin{align*}
		\lim_{g\rightarrow\infty}q^{-ng}\mathbb{E}(\#A_g(\mathbb{F}_q)^n)=\lim_{g\rightarrow\infty}q^{-ng}\frac{\#\mathcal{X}^{\times n}_g(\mathbb{F}_q)}{\#\mathcal{A}_g(\mathbb{F}_q)}=\lambda^{\frac{n(n+1)}{2}}
	\end{align*}
	\end{conjecture}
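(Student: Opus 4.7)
The plan is to combine cohomological stability of $\mathcal{X}_g^{\times n}$ with the conjectural negligibility of unstable classes (Conjecture \ref{Conjecture2}), following the Achter-Erman-Kedlaya-Wood-Zureick-Brown strategy. First I would invoke Borel's theorem to identify the stable rational cohomology $H^*_{\mathrm{stable}}(\mathcal{A}_g,\mathbb{Q}_\ell)$ as a polynomial algebra $\mathbb{Q}[\lambda_1,\lambda_3,\lambda_5,\ldots]$ on the odd-indexed Chern classes of the Hodge bundle, each a Tate class on which arithmetic Frobenius $\phi_q$ acts as $q^{-(2i+1)}$. Next I would invoke the Grushevsky-Hulek-Tommasi stability theorem to identify $H^*_{\mathrm{stable}}(\mathcal{X}_g^{\times n},\mathbb{Q}_\ell)$ as a polynomial algebra over $H^*_{\mathrm{stable}}(\mathcal{A}_g,\mathbb{Q}_\ell)$ generated by $n+\binom{n}{2}=\frac{n(n+1)}{2}$ tautological classes of weight two --- the $n$ pullbacks $\Theta_1,\ldots,\Theta_n$ of the principal polarization via the projections and the $\binom{n}{2}$ Poincar\'e bundle classes $P_{ij}$ for $1\le i<j\le n$ --- each of Tate type with $\phi_q$ acting as $q^{-1}$.

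Second, I would apply the Grothendieck-Lefschetz trace formula (Proposition \ref{GrothendieckLefschetz}) to both $\mathcal{X}_g^{\times n}$ and $\mathcal{A}_g$ and split the alternating trace sums $\sum_i(-1)^i\mathrm{tr}(\phi_q|H^i(-,\mathbb{Q}_\ell))$ into a \emph{stable} contribution, coming from classes in the GHT stability range, and an \emph{unstable} remainder. In the stable range each Borel generator $\lambda_{2i+1}$ contributes a factor $(1-q^{-(2i+1)})^{-1}$ and each of the $\frac{n(n+1)}{2}$ weight-two tautological generators contributes a factor $(1-q^{-1})^{-1}=\lambda$. The Borel factors appear identically in numerator and denominator of the ratio $\#\mathcal{X}_g^{\times n}(\mathbb{F}_q)/\#\mathcal{A}_g(\mathbb{F}_q)$, so after multiplying by $q^{-ng}$ to cancel the relative-dimension factor $q^{\dim\mathcal{X}_g^{\times n}-\dim\mathcal{A}_g}=q^{ng}$ from the Grothendieck-Lefschetz formula, the stable part of $q^{-ng}\mathbb{E}(\#A_g(\mathbb{F}_q)^n)$ converges to exactly $\lambda^{n(n+1)/2}$.

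Third, I would invoke Conjecture \ref{Conjecture2} to control the unstable remainder: assuming the unstable contribution to $\#\mathcal{X}_g^{\times n}(\mathbb{F}_q)$ is $o(q^{g(g+1)/2+ng})$ and the analogous statement for $\#\mathcal{A}_g(\mathbb{F}_q)$, the unstable part of $q^{-ng}\mathbb{E}(\#A_g(\mathbb{F}_q)^n)$ tends to zero as $g\to\infty$. Combining with the stable computation above yields $\lim_{g\to\infty} q^{-ng}\mathbb{E}(\#A_g(\mathbb{F}_q)^n)=\lambda^{n(n+1)/2}$.

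The main obstacle is of course Conjecture \ref{Conjecture2} itself. The unstable part of $e_\mathrm{c}(\mathcal{X}_g^{\times n},\mathbb{Q}_\ell)$ is built from $\ell$-adic Galois representations attached to vector-valued Siegel cusp forms of arbitrary genus, and bounding their trace contributions uniformly in $g$ requires both control on the dimensions of the unstable pieces (which grow with $g$) and Ramanujan-type bounds on the Frobenius eigenvalues --- precisely what the random matrix heuristics discussed in the introduction and in \cite{AEKWB} are designed to make plausible but do not prove. Once Conjecture \ref{Conjecture2} is granted, the stability inputs and the Grothendieck-Lefschetz bookkeeping reduce to a formal generating-function computation, and the fact that the exponent in $\lambda^{n(n+1)/2}$ is the number of weight-two tautological generators $\Theta_i,P_{ij}$ rather than any subtler expression is the geometric content of the conjecture.
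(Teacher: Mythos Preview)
Your approach is essentially the paper's: invoke Borel and Grushevsky--Hulek--Tommasi stability to identify the stable cohomology as a free polynomial algebra on the $\lambda_{2i+1}$ together with the $\frac{n(n+1)}{2}$ weight-two classes $\Theta_i,P_{ij}$, compute the stable contribution to the Lefschetz trace as a ratio of Hilbert series, cancel the Borel factors, and invoke Conjecture~\ref{Conjecture2} to kill the rest.

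The one refinement you are missing is that the paper uses a \emph{three}-way split rather than your two-way split. Outside the stability range the paper separates the image of $S^*_{n,\ell}$ (denoted $R^*_\mathrm{c}$, giving the term $T^\mathrm{unstable}_{g,n,q}$) from the quotient $B^*_\mathrm{c}=H^*_\mathrm{c}/R^*_\mathrm{c}$ (giving the term $N_{g,n,q}$). Conjecture~\ref{Conjecture2} as stated concerns only $B^*_\mathrm{c}$, so invoking it does not by itself control the full ``unstable remainder'' in your sense. The paper handles $T^\mathrm{unstable}_{g,n,q}$ \emph{unconditionally} by bounding $\dim_{\mathbb{Q}_\ell}R^{2\dim(\mathcal{X}^{\times n}_g)-2i}_\mathrm{c}\leq\dim_\mathbb{Q}S^{2i}_n\leq\exp(c_n\sqrt{i})$ via the partition-number bound, which beats the geometric decay $q^{-i/2}$. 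Your proposal should either incorporate this estimate or note that your ``unstable remainder'' is strictly larger than what Conjecture~\ref{Conjecture2} addresses; otherwise the appeal to Conjecture~\ref{Conjecture2} in your third step does not quite close the argument.
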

	
	In other words, for \(P(\lambda)\) the distribution with moment generating function \(M_{P(\lambda)}(t)=\sum_{n\geq 0}\lambda^{\frac{n(n+1)}{2}}\frac{t^n}{n!}\), the conjecture predicts
	\begin{align*}
		\lim_{g\rightarrow\infty}\widetilde{M}_{\#A_g(\mathbb{F}_q)}(t)=M_{P(\lambda)}(t)
	\end{align*}
	so that the distributions of the normalized point counts \(q^{-g}\#A_g(\mathbb{F}_q)\) converge to the distribution \(P(\lambda)\) in the limit \(g\rightarrow\infty\).
	
	\begin{remark}
	Let \(\mathcal{M}_g\) be the moduli stack of genus \(g\) curves and let \(\mathcal{M}_{g,n}\) be the moduli stack of genus \(g\) curves with \(n\) marked points, which are smooth Deligne-Mumford stacks over \(\mathbb{Z}\) (and hence over any \(\mathbb{F}_q\) by base change). On the discrete probability space \(([\mathcal{M}_g(\mathbb{F}_q)],2^{[\mathcal{M}_g(\mathbb{F}_q)]},\mu_{\mathcal{M}_g(\mathbb{F}_q)})\) consider the random variable \(\#C_g:[\mathcal{M}_g(\mathbb{F}_q)]\rightarrow\mathbb{Z}\) assigning to \([C]\in[\mathcal{M}_g(\mathbb{F}_q)]\) the point count \(\#C(\mathbb{F}_q)\). With the above normalization and with the same \(\lambda\) as above, \cite[Conjecture 1]{AEKWB} reads
	\begin{align*}
		\lim_{g\rightarrow\infty}q^{-n}\mathbb{E}(\#C_g(\mathbb{F}_q)_n)=\lim_{g\rightarrow\infty}q^{-n}\frac{\#\mathcal{M}_{g,n}(\mathbb{F}_q)}{\#\mathcal{M}_g(\mathbb{F}_q)}=\lambda^n
	\end{align*}
	where \(X_n=X(X-1)\hdots(X-n+1)\) is the falling factorial. In other words, for \(\mathrm{Pois}(\lambda)\) the Poisson distribution with mean \(\lambda\) and with falling moment generating function \(\underline{M}_{\mathrm{Pois}(\lambda)}(t)=\sum_{n\geq 0}\lambda^n\frac{t^n}{n!}\), the conjecture predicts
	\begin{align*}
		\lim_{g\rightarrow\infty}\widetilde{\underline{M}}_{\#C_g(\mathbb{F}_q)}(t)=\underline{M}_{\mathrm{Pois}(\lambda)}(t)
	\end{align*}
	where \(\widetilde{\underline{M}}_{\#C_g(\mathbb{F}_q)}(t):=\underline{M}_{\#C_g(\mathbb{F}_q)}(q^{-1}t)=\sum_{n\geq 0}q^{-n}\frac{\#\mathcal{M}_{g,n}(\mathbb{F}_q)}{\#\mathcal{M}_g(\mathbb{F}_q)}\frac{t^n}{n!}\) is the normalization of the falling moment generating function \(\underline{M}_{\#C_g(\mathbb{F}_q)}(t):=\sum_{n\geq 0}\mathbb{E}(\#C_g(\mathbb{F}_q)_n)\frac{t^n}{n!}=\sum_{n\geq 0}\frac{\#\mathcal{M}_{g,n}(\mathbb{F}_q)}{\#\mathcal{M}_g(\mathbb{F}_q)}\frac{t^n}{n!}\), so that the distributions of the normalized point counts \(q^{-1}\#C_g(\mathbb{F}_q)\) converge to a Poisson distribution with mean \(\lambda\) in the limit \(g\rightarrow\infty\). It would be interesting to give a conceptual explanation for why the same \(\lambda\) appears. 
	\end{remark}
	
\paragraph{Cohomological Stability}
	
	We now review some results on cohomological stability for \(\mathcal{A}_g\) and \(\mathcal{X}^{\times n}_g\). Consider the product morphism 
	\begin{align*}
		\mathcal{A}_{g_1}(\mathbb{C})\times\mathcal{A}_{g_2}(\mathbb{C}) &\rightarrow \mathcal{A}_{g_1+g_2}(\mathbb{C})\\
		([A_1],[A_2]) &\mapsto [A_1\times A_2]
	\end{align*}
	Choosing an elliptic curve \([E]\in\mathcal{A}_1(\mathbb{C})\) we obtain a morphism 
	\begin{align*}
		\mathcal{A}_g(\mathbb{C}) &\rightarrow \mathcal{A}_{g+1}(\mathbb{C})\\
		[A] &\mapsto [A\times E]
	\end{align*}
	such that induced morphism on cohomology \(H^*(\mathcal{A}_{g+1}(\mathbb{C}),\mathbb{Q})\rightarrow H^*(\mathcal{A}_g(\mathbb{C}),\mathbb{Q})\) does not depend on the choice of elliptic curve \(E\), since any two elliptic curves over \(\mathbb{C}\) are homotopy equivalent. Similarly we obtain a morphism 
	\begin{align*}
		\mathcal{X}^{\times n}_g(\mathbb{C}) &\rightarrow \mathcal{X}^{\times n}_{g+1}(\mathbb{C})\\
		[A;x_1,\hdots,x_n] &\mapsto [A\times E;(x_1,0),\hdots,(x_n,0)]
	\end{align*}
	such that the induced morphism on cohomology \(H^*(\mathcal{X}^{\times n}_{g+1}(\mathbb{C}),\mathbb{Q})\rightarrow H^*(\mathcal{X}^{\times n}_g(\mathbb{C}),\mathbb{Q})\) does not depend on the choice of elliptic curve \(E\) for the same reason as above. 
	
	By \cite[Theorem 7.5]{BorelI} and \cite[Theorem 4.4]{BorelII} (and by \cite[Theorem 3.2]{Hain1} making the stability range explicit), the cohomology \(H^i(\mathcal{A}_g(\mathbb{C}),\mathbb{Q})\) stabilizes in degrees \(0\leq i\leq g-1\), where it agrees with the inverse limit \(H^i(\mathcal{A}_\infty(\mathbb{C}),\mathbb{Q})=\varprojlim_gH^*(\mathcal{A}_g(\mathbb{C}),\mathbb{Q})\). The stable cohomology \(H^*(\mathcal{A}_\infty(\mathbb{C}),\mathbb{Q})\) is a free graded \(\mathbb{Q}\)-algebra, which has the following description. 
	
	Consider the graded \(\mathbb{Q}\)-algebra \(S^*=\mathbb{Q}[\lambda_i]_{i\geq 1\text{ odd}}\) where \(\mathrm{deg}(\lambda_i)=2i\). We have an isomorphism of graded \(\mathbb{Q}\)-algebras 
	\begin{align*}
		S^* &\xrightarrow{\sim} H^*(\mathcal{A}_\infty(\mathbb{C}),\mathbb{Q})\\
			\lambda_i &\mapsto \pi_*u_i
	\end{align*}
	where \(u_i=c_i(\Omega_{\mathcal{X}_g/\mathcal{A}_g})\) is the \(i\)-th Chern class of the relative canonical bundle of the universal family \(\pi:\mathcal{X}_g\rightarrow\mathcal{A}_g\). In particular we have an isomorphism \(S^i\xrightarrow{\sim}H^i(\mathcal{A}_g(\mathbb{C}),\mathbb{Q})\) for all \(0\leq i\leq g-1\). 
	
	More generally by \cite[Theorem 6.1]{GrushevskyHulekTommasi} the cohomology \(H^i(\mathcal{X}^{\times n}_g(\mathbb{C}),\mathbb{Q})\) stabilizes in degrees \(0\leq i\leq g-1\), where it agrees with the inverse limit \(H^i(\mathcal{X}^{\times n}_\infty(\mathbb{C}),\mathbb{Q})=\varprojlim_gH^i(\mathcal{X}^{\times n}_g(\mathbb{C}),\mathbb{Q})\). The stable cohomology \(H^*(\mathcal{X}^{\times n}_\infty(\mathbb{C}),\mathbb{Q})\) is a free \(H^*(\mathcal{A}_\infty(\mathbb{C}),\mathbb{Q})\)-algebra, which has the following description.
	
	Consider the graded \(\mathbb{Q}\)-algebra \(S^*_n=S^*[T_i]_{1\leq i\leq n}[P_{i,j}]_{1\leq i<j\leq n}\) where \(\mathrm{deg}(T_i)=\mathrm{deg}(P_{i,j})=2\). We have an isomorphism of graded \(S^*\simeq H^*(\mathcal{A}_\infty(\mathbb{C}),\mathbb{Q})\)-algebras 
	\begin{align*}
		S^*_n &\xrightarrow{\sim} H^*(\mathcal{X}^{\times n}_\infty(\mathbb{C}),\mathbb{Q})\\
			\lambda_i &\mapsto \pi_*u_i\\
			T_i &\mapsto \pi^*_i\Theta\\
			P_{i,j} &\mapsto \pi^*_{i,j}P
	\end{align*}
	where \(\Theta\in H^2(\mathcal{X}_g(\mathbb{C}),\mathbb{Q})\) is the class of the universal theta divisor trivialized along the zero section and \(\pi_i:\mathcal{X}^{\times n}_g\rightarrow\mathcal{X}_g\) is the \(i\)-th projection, and where \(P\in H^2(\mathcal{X}^{\times 2}_g(\mathbb{C}),\mathbb{Q})\) is the class of the universal Poincare divisor trivialized along the zero section and \(\pi_{i,j}:\mathcal{X}^{\times n}_g\rightarrow\mathcal{X}^{\times 2}_g\) is the \((i,j)\)-th projection. In particular we have an isomorphism \(S^i_n\xrightarrow{\sim}H^i(\mathcal{X}^{\times n}_g(\mathbb{C}),\mathbb{Q})\) for all \(0\leq i\leq g-1\). 
	
	We now consider the action of Frobenius on \(\ell\)-adic cohomology. Consider the graded \(\mathbb{Q}_\ell\)-algebra \(S^*_{n,\ell}=S^*_n\otimes_\mathbb{Q}\mathbb{Q}_\ell\) with endomorphism \(\mathrm{Frob}_q\) given by \(\mathrm{Frob}_q(\lambda_i)=q^i\lambda_i\) and \(\mathrm{Frob}_q(T_i)=qT_i\), and \(\mathrm{Frob}_q(P_{i,j})=qP_{i,j}\). We have a morphism of graded \(\mathbb{Q}_\ell\)-algebras \(S^*_{n,\ell}\rightarrow H^*(\mathcal{X}^{\times n}_{g,\overline{\mathbb{F}}_q},\mathbb{Q}_\ell)\) defined the same way as the morphism of graded \(\mathbb{Q}\)-algebras \(S^*_n\rightarrow H^*(\mathcal{X}^{\times n}_g(\mathbb{C}),\mathbb{Q})\) obtained from the above construction. The stable classes \(\pi_*u_i\) and \(\pi^*_i\Theta\) and \(\pi^*_{i,j}P\) are Tate type since they are formed through pullback and pushforward of Chern classes, in particular the above morphism is \(\mathrm{Frob}_q\)-equivariant. 
	
	\begin{proposition}
	A choice of embedding \(\overline{\mathbb{Q}}_p\hookrightarrow\mathbb{C}\) induces a sequence of functorial isomorphisms
	\begin{align*}
		H^i(\mathcal{X}^{\times n}_{g,\overline{\mathbb{F}}_q},\mathbb{Q}_\ell)\xrightarrow{\sim}H^i(\mathcal{X}^{\times n}_{g,\mathbb{C}},\mathbb{Q}_\ell)\xrightarrow{\sim}H^i(\mathcal{X}^{\times n}_g(\mathbb{C}),\mathbb{Q}_\ell)
	\end{align*}
	under which the classes \(\pi^*u_i\) and \(\pi^*_i\Theta\) and \(\pi^*_{i,j}P\) map to the same classes by functoriality. 
	\end{proposition}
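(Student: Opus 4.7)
The plan is to combine three standard comparison results and then observe that the three classes in question, being constructed from universal algebraic data defined over $\mathrm{Spec}\,\mathbb{Z}$, are automatically matched on the nose.

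For the first isomorphism, I would use smooth and proper base change. Since by Faltings-Chai $\mathcal{X}^{\times n}_g$ is the complement of a normal crossings divisor in a smooth proper Deligne-Mumford stack $\overline{\mathcal{X}^{\times n}_g}$ over $\mathrm{Spec}\,\mathbb{Z}$, smooth-proper base change applied to $\overline{\mathcal{X}^{\times n}_g}$ (and inductively to the smooth proper boundary strata, or equivalently using the weight spectral sequence) produces an isomorphism $H^i(\mathcal{X}^{\times n}_{g,\overline{\mathbb{F}}_q},\mathbb{Q}_\ell)\xrightarrow{\sim}H^i(\mathcal{X}^{\times n}_{g,\overline{\mathbb{Q}}_p},\mathbb{Q}_\ell)$ for any $p\neq\ell$, as already indicated in the introduction. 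Composing with the isomorphism $H^i(\mathcal{X}^{\times n}_{g,\overline{\mathbb{Q}}_p},\mathbb{Q}_\ell)\xrightarrow{\sim}H^i(\mathcal{X}^{\times n}_{g,\mathbb{C}},\mathbb{Q}_\ell)$ coming from the chosen embedding $\overline{\mathbb{Q}}_p\hookrightarrow\mathbb{C}$ and the invariance of $\ell$-adic étale cohomology under extension of algebraically closed base fields gives the first arrow in the chain. For the second isomorphism I would invoke Artin's comparison theorem in the Deligne-Mumford setting (see Laumon-Moret-Bailly), yielding a natural identification between the $\ell$-adic étale cohomology of a finite-type Deligne-Mumford stack over $\mathbb{C}$ and the singular cohomology with $\mathbb{Q}_\ell$-coefficients of its complex-analytic realization.

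For the matching of classes, observe that $u_i=c_i(\Omega_{\mathcal{X}_g/\mathcal{A}_g})$ is the $\ell$-adic Chern class of an algebraic coherent sheaf, while $\Theta$ and $P$ are cycle classes of the algebraic universal theta divisor and Poincaré divisor; all three are constructed universally over $\mathrm{Spec}\,\mathbb{Z}$ and so are preserved by each base change by construction. Chern classes and divisor cycle classes are compatible with smooth-proper base change, with extension of algebraically closed fields, and with Artin's comparison theorem, since in each case the relevant isomorphism commutes with the $\ell$-adic cycle class map (in the analytic setting this is just the identification of the topological Chern class with the cycle class of the algebraic one). Because the morphisms $\pi$, $\pi_i$, and $\pi_{i,j}$ are themselves defined over $\mathrm{Spec}\,\mathbb{Z}$, their pushforward and pullback commute with every comparison isomorphism, and hence $\pi_*u_i$, $\pi_i^*\Theta$, and $\pi_{i,j}^*P$ map to the analogous classes at every step.

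The principal technical obstacle is simply verifying that each of the three comparison results extends from schemes to Deligne-Mumford stacks. This is handled by passing to a smooth atlas and descending, since smooth-proper base change, Artin comparison, and the cycle class map are all compatible with smooth surjections. Once these stackwise versions are invoked, the compatibility of the distinguished classes under the resulting chain of isomorphisms is essentially formal.
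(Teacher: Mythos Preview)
Your proposal is correct and follows essentially the same strategy as the paper, but the paper is more concrete about the reduction to the scheme case. Rather than invoking comparison theorems directly for Deligne-Mumford stacks and then gesturing at ``passing to a smooth atlas and descending,'' the paper fixes a level $N\geq 3$, works with the scheme $X=\mathcal{X}_g[N]^{\times n}$ and its toroidal compactification $\overline{X}=(\mathcal{X}_g[N]^{\times n})^{\mathrm{tor}}$ (a smooth proper algebraic space over $\mathbb{Z}_p$ with boundary a relative normal crossings divisor, by Faltings--Chai/Lan), applies the comparison isomorphisms there via \cite[Lemma 8]{AEKWB} (noting that this lemma, which packages Nakayama's nearby-cycle argument and the Artin comparison, extends verbatim to algebraic spaces), and then descends to $\mathcal{X}^{\times n}_g=[X/\mathrm{Sp}_{2g}(\mathbb{Z}/N\mathbb{Z})]$ by taking invariants via the Hochschild--Serre spectral sequence. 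Your approach buys generality and brevity; the paper's buys precision, since the ``atlas'' is an explicit finite \'etale Galois cover and the descent is just taking $G$-invariants rather than a simplicial argument.
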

	\begin{proof}
	We employ \cite[Lemma 8]{AEKWB}: Let \(\overline{X}\) be a smooth proper scheme over \(\mathbb{Z}_p\), let \(D\) be a relative normal crossings divisor on \(\overline{X}\), let \(G\) be a finite group acting on \(\overline{X}\) and on \(D\), let \(X=\overline{X}-D\), and let \(\mathcal{X}=[X/G]\) be the corresponding stack quotient. Then a choice of embedding \(\overline{\mathbb{Q}}_p\hookrightarrow\mathbb{C}\) induces a sequence of functorial isomorphisms \(H^i(\mathcal{X}_{\overline{\mathbb{F}}_q},\mathbb{Q}_\ell)\xrightarrow{\sim}H^i(\mathcal{X}_\mathbb{C},\mathbb{Q}_\ell)\xrightarrow{\sim}H^i(\mathcal{X}(\mathbb{C}),\mathbb{Q}_\ell)\). 
	
	Now let \(N\geq 3\) and for \(n\geq 1\) consider the \(n\)-th fiber power of the universal family \(X=\mathcal{X}_g[N]^{\times n}\) over \(\mathcal{A}_g[N]\) which is a smooth quasi-projective scheme over \(\mathbb{Z}_p\) for \(p\nmid N\). Consider the toroidal compactification \(\overline{X}=(\mathcal{X}_g[N]^{\times n})^\mathrm{tor}\): by \cite[Chapter VI, Theorem 1.1]{FaltingsChai} (or more generally by \cite[Theorem 2.15(1)]{LanToroidal}) this is a smooth projective algebraic space over \(\mathbb{Z}_p\) for \(p\nmid N\) such that the complement \(D=\overline{X}-X\) is a relative (simple) normal crossings divisor. The natural action of the finite group \(G=\mathrm{Sp}_{2g}(\mathbb{Z}/N\mathbb{Z})\) on \(X\) extends to an action on \(\overline{X}\) and on \(D\), and the corresponding stack quotient is given \(\mathcal{X}=[X/G]=\mathcal{X}^{\times n}_g\). Now the result follows, noting that \cite[Lemma 8]{AEKWB} still applies for algebraic spaces (when \(G\) is trivial the first isomorphism in the lemma follows from \cite[Proposition 4.3]{Nakayama} and the second isomorphism in the lemma follows from the comparison isomorphism \cite[Theorem I.11.6]{FreitagKiehl}, and in general the lemma follows from the Hochschild-Serre spectral sequence \cite[Theorem 2.20]{Milne}, and all of these still apply for algebraic spaces). 
	\end{proof}
	
	By composition with the morphism of graded \(\mathbb{Q}_\ell\)-algebras \(S^*_{n,\ell}\rightarrow H^*(\mathcal{X}^{\times n}_{g,\overline{\mathbb{F}}_q},\mathbb{Q}_\ell)\) we obtain an isomorphism \(S^i_{n,\ell}\xrightarrow{\sim}H^i(\mathcal{X}^{\times n}_g(\mathbb{C}),\mathbb{Q}_\ell)\) for all \(0\leq i\leq g-1\), obtained by tensoring the isomorphism \(S^i_n\xrightarrow{\sim}H^i(\mathcal{X}^{\times n}_g(\mathbb{C}),\mathbb{Q})\) over \(\mathbb{Q}\) with \(\mathbb{Q}_\ell\), in particular this does not depend on the choice of embedding \(\overline{\mathbb{Q}}_p\hookrightarrow\mathbb{C}\). It follows that we have an isomorphism \(S^*_{n,\ell}\xrightarrow{\sim}H^*(\mathcal{X}^{\times n}_{g,\overline{\mathbb{F}}_q},\mathbb{Q}_\ell)\) for all \(0\leq i\leq g-1\). In particular for \(0\leq i\leq g-1\) odd we have \(H^{2\mathrm{dim}(\mathcal{X}^{\times n}_g)-i}_\mathrm{c}(\mathcal{X}^{\times n}_{g,\overline{\mathbb{F}}_q},\mathbb{Q}_\ell)=0\), and for \(0\leq i\leq g-1\) even we have \(H^{2\mathrm{dim}(\mathcal{X}^{\times n})-i}_\mathrm{c}(\mathcal{X}^{\times n}_{g,\overline{\mathbb{F}}_q},\mathbb{Q}_\ell)=\mathrm{dim}_{\mathbb{Q}_\ell}(S^i_{n,\ell})\mathbb{L}^{\mathrm{dim}(\mathcal{X}^{\times n}_g)-\frac{i}{2}}\), by Poincare duality. 
	
\paragraph{Negligible Contributions to Point Counts as $g\rightarrow\infty$}
	
	Let \(R^*_\mathrm{c}(\mathcal{X}^{\times n}_{g,\overline{\mathbb{F}}_q},\mathbb{Q}_\ell)\) be the subring of \(H^*_\mathrm{c}(\mathcal{X}^{\times n}_{g,\overline{\mathbb{F}}_q},\mathbb{Q}_\ell)\) generated by the image of \(S^*_{n,\ell}\), and let \(B^*_\mathrm{c}(\mathcal{X}^{\times n}_{g,\overline{\mathbb{F}}_q},\mathbb{Q}_\ell)=H^*_\mathrm{c}(\mathcal{X}^{\times n}_{g,\overline{\mathbb{F}}_q},\mathbb{Q}_\ell)/R^*_\mathrm{c}(\mathcal{X}^{\times n}_{g,\overline{\mathbb{F}}_q},\mathbb{Q}_\ell)\). We conjecture that the traces of Frobenius on the classes not in the image of \(S^*_{n,\ell}\) should be negligible in the limit \(g\rightarrow\infty\): 
	
	\begin{conjecture}
	\label{Conjecture2} (compare to \cite[Heuristic 2]{AEKWB}) For all \(n\geq 0\) we have 
	\begin{align*}
		\lim_{g\rightarrow\infty}q^{-\mathrm{dim}(\mathcal{X}^{\times n}_g)}\sum_{0\leq i\leq 2\mathrm{dim}(\mathcal{X}^{\times n}_g)-g}(-1)^i\mathrm{tr}(\mathrm{Frob}_q|B^i_\mathrm{c}(\mathcal{X}^{\times n}_{g,\overline{\mathbb{F}}_q},\mathbb{Q}_\ell))=0
	\end{align*}
	\end{conjecture}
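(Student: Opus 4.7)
The plan is to adapt the strategy outlined in \cite{AEKWB} for \(\mathcal{M}_{g,n}\) to the present setting of \(\mathcal{X}^{\times n}_g\). By Poincare duality, the classes in \(B^i_\mathrm{c}(\mathcal{X}^{\times n}_{g,\overline{\mathbb{F}}_q},\mathbb{Q}_\ell)\) for \(i\leq 2\mathrm{dim}(\mathcal{X}^{\times n}_g)-g\) correspond, after dualizing and Tate twisting, to the unstable cohomology classes in \(H^j(\mathcal{X}^{\times n}_{g,\overline{\mathbb{F}}_q},\mathbb{Q}_\ell)\) for \(j\geq g\), that is, those classes which lie outside the stability range of Borel and Grushevsky-Hulek-Tommasi and which are not produced from the tautological generators \(\pi_*u_i\), \(\pi^*_i\Theta\), and \(\pi^*_{i,j}P\). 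The goal is then to show that the trace of \(\mathrm{Frob}_q\) on these unstable classes is negligible compared to the top-degree normalization \(q^{\mathrm{dim}(\mathcal{X}^{\times n}_g)}\) as \(g\rightarrow\infty\).

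First I would apply Deligne's weight bounds: the eigenvalues of \(\mathrm{Frob}_q\) on the weight \(w\) graded piece of \(H^i_\mathrm{c}\) have absolute value \(q^{w/2}\) with \(w\leq i\), yielding the a priori bound \(\mathrm{dim}_{\mathbb{Q}_\ell}(B^i_\mathrm{c})\cdot q^{i/2}\) on the \(i\)-th summand. Since the range of summation ends at \(i=2\mathrm{dim}(\mathcal{X}^{\times n}_g)-g\), after dividing by \(q^{\mathrm{dim}(\mathcal{X}^{\times n}_g)}\) each term is bounded pointwise by \(\mathrm{dim}_{\mathbb{Q}_\ell}(B^i_\mathrm{c})\cdot q^{-g/2}\). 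This would already suffice if one could show that the total unstable Betti number \(\sum_i\mathrm{dim}_{\mathbb{Q}_\ell}(B^i_\mathrm{c})\) grows more slowly than \(q^{g/2}\) in \(g\), but even this weaker statement appears out of reach without further input.

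The essential ingredient, in the spirit of \cite[Heuristic 2]{AEKWB}, is a random matrix cancellation. The Frobenius eigenvalues on \(B^*_\mathrm{c}\) are ultimately governed by \(\ell\)-adic Galois representations attached to vector-valued Siegel modular forms for \(\mathrm{Sp}_{2g}(\mathbb{Z})\), and traces of elements in the associated compact Lie groups should exhibit square-root cancellation relative to the dimension. Combined with upper bounds on the dimensions of spaces of Siegel modular forms, for instance via Arthur's classification as exploited by Ta\"ibi \cite{Taibi}, \cite{TaibiAg}, this cancellation should suffice to force the normalized sum to vanish. The hardest step will be to make the random matrix heuristic rigorous: even in the analogous case of \(\mathcal{M}_{g,n}\) this is unresolved, and a rigorous treatment seems to require quantitative equidistribution statements for the Zariski closure of the image of Galois in the cohomology of the local systems \(\mathbb{V}_\lambda\) on \(\mathcal{A}_g\) as both \(g\) and the weight \(\lambda\) grow. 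Partial support for the conjecture is provided by Theorems \ref{Theorem1}, \ref{Theorem2}, and \ref{Theorem3}: within each stability range, the computed expected values match the prediction of Conjecture \ref{Conjecture1} on the nose.
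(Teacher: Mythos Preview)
The statement you are addressing is a \emph{conjecture} in the paper, not a theorem; the paper offers no proof and does not claim one. What the paper does with \ref{Conjecture2} is show that it implies \ref{Conjecture1}, by decomposing \(\#\mathcal{X}^{\times n}_g(\mathbb{F}_q)\) into stable, unstable, and negligible pieces \(T^\mathrm{stable}_{g,n,q}+T^\mathrm{unstable}_{g,n,q}+N_{g,n,q}\), computing the limit of the stable piece via the Hilbert series of \(S^*_n\), bounding the unstable piece using the sub-exponential growth of \(\dim_\mathbb{Q}(S^{2i}_n)\), and identifying \ref{Conjecture2} with the vanishing of \(\lim_{g\to\infty}q^{-\dim(\mathcal{X}^{\times n}_g)}N_{g,n,q}\).

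Your proposal is not a proof either, and you seem aware of this: you correctly note that the Deligne weight bound alone reduces the problem to controlling \(\sum_i\dim_{\mathbb{Q}_\ell}(B^i_\mathrm{c})\) against \(q^{g/2}\), and that the random matrix cancellation step is unresolved even in the \(\mathcal{M}_{g,n}\) case of \cite{AEKWB}. So what you have written is a heuristic justification in the spirit of \cite[Heuristic 2]{AEKWB}, not a proof, and it should be labeled as such. There is no gap to fix because there is no argument to complete; the honest statement is that \ref{Conjecture2} remains open, supported by the heuristics you outline and by the numerical evidence from \ref{Theorem1}, \ref{Theorem2}, \ref{Theorem3} in the stable range.
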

	
	We now show that \ref{Conjecture2} implies \ref{Conjecture1}, following the same strategy as in \cite[Theorem 3]{AEKWB}. We first review some results on cohomological stability, following \cite[Section 7]{HulekTommasi}.
	
	Now we break up the point count \(\#\mathcal{X}^{\times n}_g(\mathbb{F}_q)\) into stable, unstable, and negligible contributions: 
	\begin{align*}
		T^\mathrm{stable}_{g,n,q} &:= \sum_{0\leq i\leq g-1}(-1)^i\mathrm{tr}(\mathrm{Frob}_q|H^{2\mathrm{dim}(\mathcal{X}^{\times n}_g)-i}_\mathrm{c}(\mathcal{X}^{\times n}_{g,\overline{\mathbb{F}}_q},\mathbb{Q}_\ell))\\
			&= \sum_{0\leq i\leq g-1}(-1)^i\mathrm{tr}(\mathrm{Frob}_q|R^{2\mathrm{dim}(\mathcal{X}^{\times n}_g)-i}_\mathrm{c}(\mathcal{X}^{\times n}_{g,\overline{\mathbb{F}}_q},\mathbb{Q}_\ell))\\
		T^\mathrm{unstable}_{g,n,q} &:= \sum_{g\leq i\leq 2\mathrm{dim}(\mathcal{X}^{\times n}_g)}(-1)^i\mathrm{tr}(\mathrm{Frob}_q|R^{2\mathrm{dim}(\mathcal{X}^{\times n}_g)-i}_\mathrm{c}(\mathcal{X}^{\times n}_{g,\overline{\mathbb{F}}_q},\mathbb{Q}_\ell))\\
		N_{g,n,q} &:= \sum_{g\leq i\leq 2\mathrm{dim}(\mathcal{X}^{\times n}_g)}(-1)^i\mathrm{tr}(\mathrm{Frob}_q|B^{2\mathrm{dim}(\mathcal{X}^{\times n}_g)-i}_\mathrm{c}(\mathcal{X}^{\times n}_{g,\overline{\mathbb{F}}_q},\mathbb{Q}_\ell))
	\end{align*}
	Then by definition we have
	\begin{align*}
		\#\mathcal{X}^{\times n}_g(\mathbb{F}_q)=T^\mathrm{stable}_{g,n,q}+T^\mathrm{unstable}_{g,n,q}+N_{g,n,q}
	\end{align*}
	and the second conjecture is equivalent to the assertion that 
	\begin{align*}
		\lim_{g\rightarrow\infty}q^{-\mathrm{dim}(\mathcal{X}^{\times n}_g)}N_{g,n,q}=0
	\end{align*}
	for all \(n\geq 0\). Consider the Hilbert-Poincare series 
	\begin{align*}
		\mathrm{HS}_{S^*_n}(z):=\sum_{i\geq 0}\mathrm{dim}_\mathbb{Q}(S^i_n)z^i=\prod_{1\leq i\leq n}\frac{1}{1-z^2}\prod_{1\leq i<j\leq n}\frac{1}{1-z^2}\prod_{i\geq 1\text{ odd}}\frac{1}{1-z^{2i}}
	\end{align*}
	Now since \(R^i_{n,\ell}=R^i_n\otimes_\mathbb{Q}\mathbb{Q}_\ell\simeq R^{2\mathrm{dim}(\mathcal{X}^{\times n}_g)-i}_\mathrm{c}(\mathcal{X}^{\times n}_{g,\overline{\mathbb{F}}_q},\mathbb{Q}_\ell)\) we have
	\begin{align*}
		\lim_{g\rightarrow\infty}q^{-\mathrm{dim}(\mathcal{X}^{\times n}_g)}T^\mathrm{stable}_{g,n,q} &= \lim_{g\rightarrow\infty}q^{-\mathrm{dim}(\mathcal{X}^{\times n}_g)}\sum_{0\leq i\leq g-1}(-1)^i\mathrm{tr}(\mathrm{Frob}_q|R^{2\mathrm{dim}(\mathcal{X}^{\times n}_g)-i}_\mathrm{c}(\mathcal{X}^{\times n}_{g,\overline{\mathbb{F}}_q},\mathbb{Q}_\ell))\\
			&= \lim_{g\rightarrow\infty}q^{-\mathrm{dim}(\mathcal{X}^{\times n}_g)}\sum_{0\leq i\leq g-1}(-1)^i\mathrm{tr}(\mathrm{Frob}_q|S^i_{n,\ell})\\
			&= \lim_{g\rightarrow\infty}q^{-\mathrm{dim}(\mathcal{X}^{\times n}_g)}\sum_{0\leq i\leq g-1}q^{-i}\mathrm{dim}_\mathbb{Q}(S^{2i}_n)\\
			&= \sum_{i\geq 0}q^{-i}\mathrm{dim}_\mathbb{Q}(S^{2i}_n)=\mathrm{HS}_{S^*_n}(q^{-\frac{1}{2}})
	\end{align*}
	Let \(P_\mathrm{odd}(z)=\sum_{i\geq 0}p_\mathrm{odd}(i)z^i\) be the generating function for the odd partition numbers \(p_\mathrm{odd}(i)\) (the number of partitions of \(\{1,\hdots,i\}\) into odd parts), and let \(Q_n(z)=\sum_{i\geq 0}\binom{n+i-1}{i}z^i\) be the generating function for the binomial coefficients \(\binom{n+i-1}{i}\) (the number of multisets with cardinality \(n\) and weighted cardinality \(i\)). Then we have \(\mathrm{HS}_{S^*_n}(z)=Q_{\frac{n(n+1)}{2}}(z^2)P_\mathrm{odd}(z^2)\). 
	
	For the partition numbers \(p(i)\) (the number of partitions of \(\{1,\hdots,n\}\)) one has the exponential bound \(p_\mathrm{odd}(i)\leq p(i)\leq\exp(c\sqrt{i})\) for some constant \(c\) not depending on \(i\). In particular we have
	\begin{align*}
		\mathrm{dim}_\mathbb{Q}(S^{2i}_n)=\sum_{0\leq j\leq i}\binom{\frac{n(n+1)}{2}+j-i}{j-1}p_\mathrm{odd}(i-j)\leq\exp(c_n\sqrt{i})
	\end{align*}
	for some constant \(c_n\) not depending on \(i\). Since \(R^*_\mathrm{c}(\mathcal{X}^{\times n}_g,\mathbb{Q}_\ell)\) is defined in terms of the image of a morphism from \(S^*_n\) to cohomology we have \(\mathrm{dim}_{\mathbb{Q}_\ell}(R^{2\mathrm{dim}(\mathcal{X}^{\times n}_g)-2i}_\mathrm{c}(\mathcal{X}^{\times n}_{g,\overline{\mathbb{F}}_q},\mathbb{Q}_\ell))\leq\mathrm{dim}_\mathbb{Q}(S^{2i}_n)\), in particular we have \(\mathrm{dim}_{\mathbb{Q}_\ell}(R^{2\mathrm{dim}(\mathcal{X}^{\times n}_g)-2i}_\mathrm{c}(\mathcal{X}^{\times n}_{g,\overline{\mathbb{F}}_q},\mathbb{Q}_\ell))\leq\exp(c_n\sqrt{i})\). Now we have
	\begin{align*}
		\lim_{g\rightarrow\infty}q^{-\mathrm{dim}(\mathcal{X}^{\times n}_g)}T^\mathrm{unstable}_{g,n,q} &= \lim_{g\rightarrow\infty}q^{-\mathrm{dim}(\mathcal{X}^{\times n}_g)}\sum_{g\leq i\leq 2\mathrm{dim}(\mathcal{X}^{\times n}_g)}(-1)^i\mathrm{tr}(\mathrm{Frob}_q|R^{2\mathrm{dim}(\mathcal{X}^{\times n}_g)-i}_\mathrm{c}(\mathcal{X}^{\times n}_{g,\overline{\mathbb{F}}_q},\mathbb{Q}_\ell))\\
			&\leq \lim_{g\rightarrow\infty}\sum_{g\leq i\leq 2\mathrm{dim}(\mathcal{X}^{\times n}_g)}(-1)^iq^{-\frac{i}{2}}\mathrm{dim}_\mathbb{Q}(S^i_n)\\
			&\leq \lim_{g\rightarrow\infty}\sum_{g\leq i\leq 2\mathrm{dim}(\mathcal{X}^{\times n}_g)}(-1)^iq^{-\frac{i}{2}}\exp(c_n\sqrt{i})=0
	\end{align*}
	Now suppose that \(\lim_{g\rightarrow\infty}q^{-\mathrm{dim}(\mathcal{X}^{\times n}_g)}N_{g,n,q}=0\). Then we have 
	\begin{align*}
		\lim_{g\rightarrow\infty}q^{-ng}\frac{\#\mathcal{X}^{\times n}_g(\mathbb{F}_q)}{\#\mathcal{A}_g(\mathbb{F}_q)} &= \lim_{g\rightarrow\infty}q^{-ng}\frac{T^\mathrm{stable}_{g,n,q}+T^\mathrm{unstable}_{g,n,q}+N_{g,n,q}}{T^\mathrm{stable}_{g,0,q}+T^\mathrm{unstable}_{g,0,q}+N_{g,0,q}}\\
			&= \frac{\mathrm{HS}_{S^*_n}(q^{-\frac{1}{2}})}{\mathrm{HS}_{S^*}(q^{-\frac{1}{2}})}=\prod_{1\leq i\leq n}\frac{1}{1-q^{-1}}\prod_{1\leq i<j\leq n}\frac{1}{1-q^{-1}}=\lambda^{\frac{n(n+1)}{2}}
	\end{align*}
	so it follows that the second conjecture \ref{Conjecture2} on the negligible contribution of non-Tate classes to point counts implies the first conjecture \ref{Conjecture1} on the asymptotics of the distribution.

	Expanded around \(q=\infty\), the conjecture \ref{Conjecture1} predicts the following leading terms for the expected values \(\mathbb{E}(\#A(\mathbb{F}_q)^n)\) in the limit \(g\rightarrow\infty\): 
	\begin{align*}
		\lim_{g\rightarrow\infty}q^{-g}\mathbb{E}(\#A_g(\mathbb{F}_q)) &= 1+q^{-1}+q^{-2}+q^{-3}+q^{-4}+\hdots\\
		\lim_{g\rightarrow\infty}q^{-2g}\mathbb{E}(\#A_g(\mathbb{F}_q)^2) &= 1+3q^{-1}+6q^{-2}+10q^{-3}+15q^{-4}+\hdots\\
		\lim_{g\rightarrow\infty}q^{-3g}\mathbb{E}(\#A_g(\mathbb{F}_q)^3) &= 1+6q^{-1}+21q^{-2}+56q^{-3}+126q^{-4}+\hdots\\
		\lim_{g\rightarrow\infty}q^{-4g}\mathbb{E}(\#A_g(\mathbb{F}_q)^4) &= 1+10q^{-1}+55q^{-2}+220q^{-3}+715q^{-4}+\hdots\\
		\lim_{g\rightarrow\infty}q^{-5g}\mathbb{E}(\#A_g(\mathbb{F}_q)^5) &= 1+15q^{-1}+120q^{-2}+680q^{-3}+3060q^{-4}+\hdots
	\end{align*}

\section{Computations for $g=1$}

	Let \(\mathcal{A}_1\) be the moduli stack of elliptic curves, which is a smooth Deligne-Mumford stack of dimension \(1\) over \(\mathbb{Z}\). Let \(\pi:\mathcal{X}_1\rightarrow\mathcal{A}_1\) be the universal elliptic curve over \(\mathcal{A}_1\) and let \(\mathbb{V}=\mathbb{R}^1\pi_*\mathbb{Q}_\ell\) be the \(\ell\)-adic local system on \(\mathcal{A}_1\) corresponding to the standard representation of \(\mathrm{SL}_2\). For \(\lambda\geq 0\) an integer let \(\mathbb{V}_\lambda=\mathrm{Sym}^\lambda(\mathbb{V})\) be the \(\ell\)-adic local system on \(\mathcal{A}_1\) corresponding to the irreducible \(\lambda+1\)-dimensional representation of \(\mathrm{SL}_2\). For \(\lambda\) odd we have \(H^*(\mathcal{A}_1,\mathbb{V}_\lambda)=0\) since \(-\mathrm{id}\in\mathrm{SL}_2(\mathbb{Z})\) acts by multiplication by \((-1)^\lambda\) on the stalks of \(\mathbb{V}_\lambda\). 
	
	Let \(\mathbb{S}_{\Gamma(1)}[\lambda+2]=\bigoplus_f\rho_f\) be the \(\ell\)-adic Galois representation corresponding to cusp forms of weight \(\lambda+2\) for \(\Gamma(1)=\mathrm{SL}_2(\mathbb{Z})\): for each eigenform \(f\in S_{\lambda+2}(\Gamma(1))\) we have a \(2\)-dimensional \(\ell\)-adic Galois representation \(\rho_f\), and we have 
	\begin{align*}
		\mathrm{tr}(\mathrm{Frob}_p|\mathbb{S}_{\Gamma(1)}[\lambda+2])=\mathrm{tr}(T_p|S_{\lambda+2}(\Gamma(1)))
	\end{align*} 
	for every prime \(p\), which determines \(\mathbb{S}_{\Gamma(1)}[\lambda+1]\) as an element of the Grothendieck group of \(\ell\)-adic Galois representations. The \(\ell\)-adic Galois representation \(\rho_f\) is irreducible as a representation of \(\mathrm{Gal}(\overline{\mathbb{Q}}/\mathbb{Q})\) and of \(\mathrm{Gal}(\overline{\mathbb{F}}_p/\mathbb{F}_p)\). 
	
	By work of Eichler-Shimura and Deligne we have the following: 
	
	\begin{proposition}
	\label{EichlerShimura1} \cite[Theorem 2.3]{BFvdG3} For \(\lambda>0\) even we have 
	\begin{align*}
		e_\mathrm{c}(\mathcal{A}_1,\mathbb{V}_\lambda)=-H^1_\mathrm{c}(\mathcal{A}_1,\mathbb{V}_\lambda)=-\mathbb{S}_{\Gamma(1)}[\lambda+2]-1
	\end{align*}
	as an element of the Grothendieck group of \(\ell\)-adic Galois representations. 
	\end{proposition}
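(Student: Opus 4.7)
The plan is to reduce $e_\mathrm{c}(\mathcal{A}_1, \mathbb{V}_\lambda)$ to the single cohomology group $H^1_\mathrm{c}(\mathcal{A}_1, \mathbb{V}_\lambda)$ by a vanishing argument, and then identify this cohomology group via the Eichler-Shimura isomorphism together with Deligne's construction of $\ell$-adic Galois representations attached to cusp forms.

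First I would establish that $H^i_\mathrm{c}(\mathcal{A}_1, \mathbb{V}_\lambda) = 0$ for $i \neq 1$. Since $\mathcal{A}_1$ is a smooth Deligne-Mumford stack of dimension $1$, compactly supported cohomology is concentrated in degrees $0, 1, 2$. The group $H^0_\mathrm{c}(\mathcal{A}_1, \mathbb{V}_\lambda)$ vanishes because $\mathcal{A}_1$ is connected and non-proper, so any compactly supported horizontal section of a local system must be zero. For $H^2_\mathrm{c}$, Poincar\'e duality gives $H^2_\mathrm{c}(\mathcal{A}_1, \mathbb{V}_\lambda) \cong H^0(\mathcal{A}_1, \mathbb{V}_\lambda^\vee(1))^\vee$, and using $\mathbb{V}^\vee \cong \mathbb{V}(1)$ from the Weil pairing this becomes the dual of $V_\lambda^{\mathrm{SL}_2(\mathbb{Z})}(\lambda+1) = 0$, since the nontrivial irreducible representations of $\mathrm{SL}_2$ have no $\mathrm{SL}_2(\mathbb{Z})$-invariants. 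Thus $e_\mathrm{c}(\mathcal{A}_1, \mathbb{V}_\lambda) = -H^1_\mathrm{c}(\mathcal{A}_1, \mathbb{V}_\lambda)$.

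Second, I would compute $H^1_\mathrm{c}(\mathcal{A}_1, \mathbb{V}_\lambda)$ by splitting it into an interior part and a boundary contribution at the cusp. For the open immersion $j : \mathcal{A}_1 \hookrightarrow \overline{\mathcal{M}}_{1,1}$ with the cusp as complement, the excision long exact sequence expresses $H^1_\mathrm{c}(\mathcal{A}_1, \mathbb{V}_\lambda)$ as an extension of the interior cohomology $H^1_!(\mathcal{A}_1, \mathbb{V}_\lambda) := \mathrm{Im}(H^1_\mathrm{c} \to H^1)$ by the cokernel of the restriction to the local cohomology at the cusp. By the Eichler-Shimura isomorphism together with Deligne's theorem attaching $\ell$-adic Galois representations to Hecke eigenforms in $S_{\lambda+2}(\Gamma(1))$, the interior cohomology is identified with $\mathbb{S}_{\Gamma(1)}[\lambda+2]$. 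The cusp contribution is computed via the coinvariants of the unipotent local monodromy $N = \bigl(\begin{smallmatrix} 1 & 1 \\ 0 & 1 \end{smallmatrix}\bigr)$ on $V_\lambda = \mathrm{Sym}^\lambda(\mathrm{std})$; this space is one-dimensional with trivial Frobenius action, contributing the trivial Galois representation $\mathbb{Q}_\ell$. Combining these yields $H^1_\mathrm{c}(\mathcal{A}_1, \mathbb{V}_\lambda) = \mathbb{S}_{\Gamma(1)}[\lambda+2] + 1$ in the Grothendieck group.

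The main obstacle is identifying the Galois action on the Eisenstein boundary contribution precisely. A careful analysis of the weight filtration at the cusp is needed to see why the contribution to $H^1_\mathrm{c}$ is the trivial representation $1$, while the dual Eisenstein contribution to $H^1$ (coming from invariants rather than coinvariants) is $\mathbb{L}^{\lambda+1}$: both are one-dimensional but sit in different graded pieces of the local Hodge structure at the cusp. An alternative route is to obtain the character over $\mathbb{C}$ via the classical Betti Eichler-Shimura theory, transport the cuspidal piece to the $\ell$-adic side via Deligne's construction, and then identify the boundary contribution by a direct computation on $\overline{\mathcal{M}}_{1,1}$.
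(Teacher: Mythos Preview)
The paper does not give its own proof of this proposition; it is stated with a citation to \cite[Theorem 2.3]{BFvdG3} and attributed to Eichler--Shimura and Deligne, and is then used as a black box. So there is no argument in the paper to compare against.

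Your outline is the standard one and is correct. The vanishing of $H^0_\mathrm{c}$ and $H^2_\mathrm{c}$ for $\lambda>0$ is exactly as you say, and the decomposition of $H^1_\mathrm{c}$ into the interior (cuspidal) piece $\mathbb{S}_{\Gamma(1)}[\lambda+2]$ and a one-dimensional Eisenstein piece at the cusp is the content of Eichler--Shimura together with Deligne's construction. One small point of care: depending on which exact sequence you write down (using $j_!\to j_*$ versus $j_!\to Rj_*$), the boundary term appearing in $H^1_\mathrm{c}$ is naturally the \emph{invariants} $(\mathbb{V}_\lambda)^N$ rather than the coinvariants; both are one-dimensional, but it is the invariant line that carries the trivial Galois action and hence contributes the class $1$, consistent with your Poincar\'e-dual observation that the Eisenstein piece of $H^1$ is $\mathbb{L}^{\lambda+1}$. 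This does not affect the Grothendieck-group identity, and your discussion of the weight filtration already flags exactly this subtlety.
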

	
	This remains true for \(\lambda=0\) if we set \(\mathbb{S}_{\Gamma(1)}[2]:=-\mathbb{L}-1\): we have
	\begin{align*}
		e_\mathrm{c}(\mathcal{A}_1,\mathbb{Q}_\ell)=H^2_\mathrm{c}(\mathcal{A}_1,\mathbb{Q}_\ell)=\mathbb{L}
	\end{align*}	
	We will use the following values for the Euler characteristics \(e_\mathrm{c}(\mathcal{A}_1,\mathbb{V}_\lambda)\), which are obtained by combining \ref{EichlerShimura1} with the vanishing of the spaces \(S_{\lambda+2}(\Gamma(1))\) for all \(\lambda\geq 0\) with \(\lambda\leq 9\): 
	\begin{center}\begin{tabular}{|c|c|}
		\hline
		\(\lambda\) & \(e_\mathrm{c}(\mathcal{A}_1,\mathbb{V}_\lambda)\)\\
		\hline\hline
		\(0\) & \(\mathbb{L}\)\\
		\hline
		\(2\) & \(-1\)\\
		\hline
		\(4\) & \(-1\)\\
		\hline
	\end{tabular}\qquad\begin{tabular}{|c|c|}
		\hline
		\(\lambda\) & \(e_\mathrm{c}(\mathcal{A}_1,\mathbb{V}_\lambda)\)\\
		\hline\hline
		\(6\) & \(-1\)\\
		\hline
		\(8\) & \(-1\)\\
		\hline
		\(10\) & \(-\mathbb{S}_{\Gamma(1)}[12]-1\)\\
		\hline
	\end{tabular}\end{center}
	The space \(S_{12}(\Gamma(1))\) is spanned by the discriminant cusp form
	\begin{align*}
		\Delta=\sum_{n\geq 1}\tau(n)q^n=q-24q^2+252q^3-1472q^4+\hdots
	\end{align*}
	which contributes an irreducible \(2\)-dimensional \(\ell\)-adic Galois representation \(\mathbb{S}_{\Gamma(1)}[12]\) to \(H^1(\mathcal{A}_1,\mathbb{V}_{10})\), with the property that \(\mathrm{tr}(\mathrm{Frob}_p|\mathbb{S}_{\Gamma(1)}[12])=\tau(p)\), which is not polynomial in \(p\).
	
	
	We obtain the following result (compare to the tables at the end of \cite{Getzler}):

	\begin{theorem}
	\label{Theorem1} The cohomology \(H^i(\mathcal{X}^{\times n}_1,\mathbb{Q}_\ell)\) is Tate type for all \(i\) and all \(1\leq n\leq 9\) (see table 1). In this range the compactly supported Euler characteristics are given by: 
	\begin{align*}
		e_\mathrm{c}(\mathcal{X}_1,\mathbb{Q}_\ell) &= \mathbb{L}^2+\mathbb{L}\\
		e_\mathrm{c}(\mathcal{X}^{\times 2}_1,\mathbb{Q}_\ell) &= \mathbb{L}^3+3\mathbb{L}^2+\mathbb{L}-1\\
		e_\mathrm{c}(\mathcal{X}^{\times 3}_1,\mathbb{Q}_\ell) &= \mathbb{L}^4+6\mathbb{L}^3+6\mathbb{L}^2-2\mathbb{L}-3\\
		e_\mathrm{c}(\mathcal{X}^{\times 4}_1,\mathbb{Q}_\ell) &= \mathbb{L}^5+10\mathbb{L}^4+20\mathbb{L}^3+4\mathbb{L}^2-14\mathbb{L}-7\\
		e_\mathrm{c}(\mathcal{X}^{\times 5}_1,\mathbb{Q}_\ell) &= \mathbb{L}^6+15\mathbb{L}^5+50\mathbb{L}^4+40\mathbb{L}^3-30\mathbb{L}^2-49\mathbb{L}-15\\
		e_\mathrm{c}(\mathcal{X}^{\times 6}_1,\mathbb{Q}_\ell) &= \mathbb{L}^7+21\mathbb{L}^6+105\mathbb{L}^5+160\mathbb{L}^4-183\mathbb{L}^2-139\mathbb{L}-31\\
		e_\mathrm{c}(\mathcal{X}^{\times 7}_1,\mathbb{Q}_\ell) &= \mathbb{L}^8+28\mathbb{L}^7+196\mathbb{L}^6+469\mathbb{L}^5+280\mathbb{L}^4-427\mathbb{L}^3-700\mathbb{L}^2-356\mathbb{L}-63\\
		e_\mathrm{c}(\mathcal{X}^{\times 8}_1,\mathbb{Q}_\ell) &= \mathbb{L}^9+36\mathbb{L}^8+336\mathbb{L}^7+1148\mathbb{L}^6+1386\mathbb{L}^5-406\mathbb{L}^4-2436\mathbb{L}^3-2224\mathbb{L}^2-860\mathbb{L}-127\\
		e_\mathrm{c}(\mathcal{X}^{\times 9}_1,\mathbb{Q}_\ell) &= \mathbb{L}^{10}+45\mathbb{L}^9+540\mathbb{L}^8+2484\mathbb{L}^7+4662\mathbb{L}^6+1764\mathbb{L}^5-6090\mathbb{L}^4-9804\mathbb{L}^3-6372\mathbb{L}^2-2003\mathbb{L}-255
	\end{align*}
	The cohomology \(H^i(\mathcal{X}^{\times 10}_1,\mathbb{Q}_\ell)\) is Tate type for all \(i\neq 11\) (see table 1), whereas for \(i=11\) we have
	\begin{align*}
		H^{11}(\mathcal{X}^{\times 10}_1,\mathbb{Q}_\ell)=\mathbb{S}_{\Gamma(1)}[12]+\mathbb{L}^{11}+99\mathbb{L}^{10}+1925\mathbb{L}^9+12375\mathbb{L}^8+29700\mathbb{L}^7
	\end{align*}
	where \(\mathbb{S}_{\Gamma(1)}[12]\) is the \(2\)-dimensional Galois representation attached to the weight \(12\) cusp form \(\Delta\in S_{12}(\Gamma(1))\). In this case the compactly supported Euler characteristic is given by: 
	\begin{align*}
		e_\mathrm{c}(\mathcal{X}^{\times 10}_1,\mathbb{Q}_\ell) &= -\mathbb{S}_{\Gamma(1)}[12]\\
			&+ \mathbb{L}^{11}+55\mathbb{L}^{10}+825\mathbb{L}^9+4905\mathbb{L}^8+12870\mathbb{L}^7+12264\mathbb{L}^6\\
			&- 9240\mathbb{L}^5-33210\mathbb{L}^4-33495\mathbb{L}^3-17095\mathbb{L}^2-4553\mathbb{L}-511
	\end{align*}
	In particular the compactly supported Euler characteristic \(e_\mathrm{c}(\mathcal{X}^{\times n}_1,\mathbb{Q}_\ell)\) is not Tate type if \(n\geq 10\). 
	\end{theorem}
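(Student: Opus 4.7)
The plan is to combine Proposition \ref{Leray} (degeneration of the Leray spectral sequence), Proposition \ref{Kunneth} (K\"unneth decomposition of $\mathbb{R}^j\pi^n_*\mathbb{Q}_\ell$), and Proposition \ref{EichlerShimura1} (the Eichler--Shimura computation on $\mathcal{A}_1$) with the Clebsch--Gordan decomposition of tensor powers of the standard $\mathrm{SL}_2$-representation. First, by Proposition \ref{Kunneth}, the sheaf $\mathbb{R}^j\pi^n_*\mathbb{Q}_\ell$ is a direct sum, indexed by tuples $(j_1,\ldots,j_n)\in\{0,1,2\}^n$ with $\sum_i j_i=j$, of tensor products $\bigotimes_i\wedge^{j_i}\mathbb{V}$. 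Since $g=1$ the only nonzero exterior powers are $\wedge^0\mathbb{V}=\mathbb{Q}_\ell$, $\wedge^1\mathbb{V}=\mathbb{V}$, and $\wedge^2\mathbb{V}=\mathbb{L}$; if $k$ of the indices equal $1$ and $m$ equal $2$, so that $k+2m=j$, the corresponding summand is $\mathbb{V}^{\otimes k}\otimes\mathbb{L}^m$, appearing with multinomial multiplicity $\binom{n}{k,m,n-k-m}$.

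Second, using Clebsch--Gordan for $\mathrm{SL}_2$, one has $\mathbb{V}^{\otimes k}\simeq\bigoplus_\lambda m^k_\lambda\,\mathbb{V}_\lambda$ where $\lambda$ ranges over integers with $0\leq\lambda\leq k$ and $\lambda\equiv k\pmod 2$, with multiplicities given by the ballot numbers $m^k_\lambda=\binom{k}{(k-\lambda)/2}-\binom{k}{(k-\lambda)/2-1}$. Feeding this into the alternating sum of Corollary \ref{EulerChar} expresses $e_\mathrm{c}(\mathcal{X}^{\times n}_1,\mathbb{Q}_\ell)$ as an explicit $\mathbb{Z}[\mathbb{L}]$-linear combination of the Euler characteristics $e_\mathrm{c}(\mathcal{A}_1,\mathbb{V}_\lambda)$ for even $\lambda$ with $0\leq\lambda\leq n$, with the $\mathbb{L}^m$ twists bookkeeping the contributions of the $\wedge^2\mathbb{V}$ factors.

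Third, I substitute the values from Proposition \ref{EichlerShimura1}: since $S_{\lambda+2}(\Gamma(1))=0$ for all even $\lambda$ with $0\leq\lambda\leq 9$, $e_\mathrm{c}(\mathcal{A}_1,\mathbb{V}_\lambda)$ is Tate type throughout this range, while the first non-Tate contribution $e_\mathrm{c}(\mathcal{A}_1,\mathbb{V}_{10})=-\mathbb{S}_{\Gamma(1)}[12]-1$ enters at $\lambda=10$. Because $\mathbb{V}_{10}$ cannot occur in $\mathbb{V}^{\otimes k}$ unless $k\geq 10$, and $k\leq n$, the Euler characteristic is Tate type for $1\leq n\leq 9$. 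For $n=10$ the unique configuration producing $\mathbb{V}_{10}$ is $(j_1,\ldots,j_{10})=(1,\ldots,1)$, which forces $j=10$ and contributes with ballot multiplicity $m^{10}_{10}=1$ and no Tate twist; the corresponding non-Tate piece lies in $H^1(\mathcal{A}_1,\mathbb{V}_{10})$ and therefore in $H^{1+10}(\mathcal{X}^{\times 10}_1,\mathbb{Q}_\ell)=H^{11}$, producing the claimed $\mathbb{S}_{\Gamma(1)}[12]$ summand while leaving the complementary Tate part of $H^{11}$ and every other cohomological degree Tate type.

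The main obstacle is not conceptual but computational: extracting the exact polynomials in $\mathbb{L}$ decorating both $e_\mathrm{c}(\mathcal{X}^{\times n}_1,\mathbb{Q}_\ell)$ for $1\leq n\leq 10$ and the Tate part of $H^{11}(\mathcal{X}^{\times 10}_1,\mathbb{Q}_\ell)$ requires summing large collections of multinomial-weighted ballot numbers twisted by powers of $\mathbb{L}$. This bookkeeping is best automated in a computer algebra system, as in the Sage program of Lee acknowledged by the author, and then compared against the stated coefficients.
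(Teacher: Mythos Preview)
Your approach matches the paper's: combine Propositions \ref{Leray}, \ref{Kunneth}, and \ref{EichlerShimura1}, and compute multiplicities via the Clebsch--Gordan rule (the paper states this rule in the form \(\mathbb{V}_{\lambda_1}\otimes\mathbb{V}_{\lambda_2}=\mathbb{V}_{\lambda_1+\lambda_2}\oplus\cdots\oplus\mathbb{V}_{|\lambda_1-\lambda_2|}\), while you give the closed-form ballot numbers, which is a nice refinement). The argument that \(H^i(\mathcal{X}^{\times n}_1,\mathbb{Q}_\ell)\) is Tate type for \(n\leq 9\), and that \(\mathbb{S}_{\Gamma(1)}[12]\) appears exactly once and only in \(H^{11}\) when \(n=10\), is correct and complete.

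There is one genuine gap. The theorem asserts that \(e_\mathrm{c}(\mathcal{X}^{\times n}_1,\mathbb{Q}_\ell)\) is not Tate type for \emph{all} \(n\geq 10\), but you only treat \(n=10\). For \(n>10\) the local system \(\mathbb{V}_{10}\) (and also \(\mathbb{V}_{12},\mathbb{V}_{14},\ldots\)) appears in many summands of \(\mathbb{R}^j\pi^n_*\mathbb{Q}_\ell\), with various Tate twists and in various cohomological degrees, and one must rule out cancellation of the resulting \(\mathbb{S}_{\Gamma(1)}[12]\) contributions in the alternating sum. The paper supplies the missing step by a parity argument: any irreducible \(2\)-dimensional piece can only arise from \(\mathbb{S}_{\Gamma(1)}[\lambda+2]\subset H^1(\mathcal{A}_1,\mathbb{V}_\lambda)\) with \(\lambda\) even; since \(\mathbb{V}_\lambda\) then occurs only in \(\mathbb{R}^j\pi^n_*\mathbb{Q}_\ell\) for \(j\equiv\lambda\pmod 2\), i.e.\ \(j\) even, every such contribution lands in \(H^{1+j}\) for odd \(1+j\). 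Hence all non-Tate contributions enter the Euler characteristic with the same sign and cannot cancel. You should add this (or an equivalent) argument to close the case \(n>10\).
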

	\begin{proof}
	Follows by combining \ref{Leray} and \ref{Kunneth} with \ref{EichlerShimura1}. In this case the multiplicities \(m^{j,n}_\lambda\) are easily computed using the fact that
	\begin{align*}
		\mathbb{V}_{\lambda_1}\otimes\mathbb{V}_{\lambda_2}=\mathbb{V}_{\lambda_1+\lambda_2}\oplus\mathbb{V}_{\lambda_1+\lambda_2-2}\oplus\hdots\oplus\mathbb{V}_{|\lambda_1-\lambda_2|}
	\end{align*}
	To argue that \(e_\mathrm{c}(\mathcal{X}^{\times n}_1,\mathbb{Q}_\ell)\) is not Tate type if \(n\geq 10\) note that \(H^{11}(\mathcal{X}^{\times 10}_1,\mathbb{Q}_\ell)\) (which is not Tate type, owing to the irreducible \(2\)-dimensional contribution \(\mathbb{S}_{\Gamma(1)}[12]\) to \(H^1(\mathcal{A}_1,\mathbb{V}_{10})\)) appears as a summand in \(H^{11}(\mathcal{X}^{\times n}_1,\mathbb{Q}_\ell)\) for all \(n\geq 10\) by the K\"unneth formula. This contribution cannot be cancelled in the Euler characteristic: since the contribution occurs in \(H^i(\mathcal{X}^{\times n}_1,\mathbb{Q}_\ell)\) for \(i\) odd, any contribution leading to cancellation would have to occur in \(H^i(\mathcal{X}^{\times n}_1,\mathbb{Q}_\ell)\) for \(i\) even. Since \(H^*(\mathcal{A}_1,\mathbb{V}_\lambda)=0\) for \(\lambda>0\) odd, any contribution to \(H^i(\mathcal{X}^{\times n}_1,\mathbb{Q}_\ell)\) for \(i\) even would have to come from a contribution to \(H^0(\mathcal{A}_1,\mathbb{V}_\lambda)\) (since \(H^2(\mathcal{A}_1,\mathbb{V}_\lambda)=0\) for all \(\lambda\geq 0\)), but there are no irreducible \(2\)-dimensional contributions in this case: the only irreducible \(2\)-dimensional contributions come from the contribution \(\mathbb{S}_{\Gamma(1)}[\lambda+2]\) to \(H^1(\mathcal{A}_1,\mathbb{V}_\lambda)\). 
	\end{proof}
	
	We obtain the following corollary:
	
	\begin{corollary}
	\label{MGF1} The first \(9\) terms of the moment generating function \(M_{\#A_1(\mathbb{F}_q)}(t)\) are rational functions in \(q\):
	\begin{align*}
		1 &+ (\mathbf{q+1})t\\
		&+ (\mathbf{q^2+3q}+1-\tfrac{1}{q})\tfrac{t^2}{2!}\\
		&+ (\mathbf{q^3+6q^2}+6q-2-\tfrac{3}{q})\tfrac{t^3}{3!}\\
		&+ (\mathbf{q^4+10q^3}+20q^2+4q-14-\tfrac{7}{q})\tfrac{t^4}{4!}\\
		&+ (\mathbf{q^5+15q^4}+50q^3+40q^2-30q-49-\tfrac{15}{q})\tfrac{t^5}{5!}\\
		&+ (\mathbf{q^6+21q^5}+105q^4+160q^3-183q-139-\tfrac{31}{q})\tfrac{t^6}{6!}\\
		&+ (\mathbf{q^7+28q^6}+196q^5+469q^4+280q^3-427q^2-700q-356-\tfrac{63}{q})\tfrac{t^7}{7!}\\
		&+ (\mathbf{q^8+36q^7}+336q^6+1148q^5+1386q^4-406q^3-2436q^2-2224q-860-\tfrac{127}{q})\tfrac{t^8}{8!}\\
		&+ (\mathbf{q^9+45q^8}+540q^7+2484q^6+4662q^5+1764q^4-6090q^3-9804q^2-6372q-2003-\tfrac{255}{q})\tfrac{t^9}{9!}
	\end{align*}
	\end{corollary}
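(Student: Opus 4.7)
The proof is essentially a bookkeeping exercise once Theorem \ref{Theorem1} is available, so my plan is to assemble the corollary directly from that theorem together with the Grothendieck-Lefschetz trace formula of Proposition \ref{GrothendieckLefschetz}.

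First I would recall that for \(1\leq n\leq 9\), Theorem \ref{Theorem1} provides an explicit expression for \(e_\mathrm{c}(\mathcal{X}^{\times n}_1,\mathbb{Q}_\ell)\) as a \(\mathbb{Z}\)-linear combination of powers of the Lefschetz motive \(\mathbb{L}=\mathbb{Q}_\ell(-1)\); in particular each such Euler characteristic is of Tate type. Since \(\mathrm{tr}(\mathrm{Frob}_q|\mathbb{L}^k)=q^k\), applying the trace of Frobenius to the formulas in Theorem \ref{Theorem1} amounts to substituting \(\mathbb{L}\mapsto q\), and Proposition \ref{GrothendieckLefschetz} then yields the groupoid point counts \(\#\mathcal{X}^{\times n}_1(\mathbb{F}_q)\) as explicit integer-coefficient polynomials in \(q\) for each \(n\) in this range.

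Next I would use the relation
\begin{align*}
	\mathbb{E}(\#A_1(\mathbb{F}_q)^n)=\frac{\#\mathcal{X}^{\times n}_1(\mathbb{F}_q)}{\#\mathcal{A}_1(\mathbb{F}_q)}=\frac{\#\mathcal{X}^{\times n}_1(\mathbb{F}_q)}{q}
\end{align*}
established in the introduction, using \(\#\mathcal{A}_1(\mathbb{F}_q)=q\). Dividing each polynomial point count from the previous step by \(q\) produces a Laurent polynomial in \(q\) (a rational function in \(q\) with a possible pole only at \(q=0\)), which I would then slot into the definition \(M_{\#A_1(\mathbb{F}_q)}(t)=\sum_{n\geq 0}\mathbb{E}(\#A_1(\mathbb{F}_q)^n)\frac{t^n}{n!}\) to obtain the displayed series.

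There is no real obstacle: all the hard work is absorbed into Theorem \ref{Theorem1}, and what remains is purely arithmetic. The only things worth double-checking are that the division by \(q\) is done correctly (so that, for example, the constant term \(\mathbb{L}\) in \(e_\mathrm{c}(\mathcal{X}_1,\mathbb{Q}_\ell)\) contributes the \(+1\) and not a \(\tfrac{1}{q}\) term to \(\mathbb{E}(\#A_1(\mathbb{F}_q))\)) and that the \(n=0\) term evaluates to \(1\), as required for a moment generating function; the latter follows from \(\#\mathcal{X}^{\times 0}_1(\mathbb{F}_q)=\#\mathcal{A}_1(\mathbb{F}_q)\).
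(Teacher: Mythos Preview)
Your proposal is correct and matches the paper's approach: the corollary is stated without a separate proof, being an immediate consequence of Theorem~\ref{Theorem1} via the Grothendieck--Lefschetz trace formula (substituting \(\mathbb{L}\mapsto q\)) and dividing by \(\#\mathcal{A}_1(\mathbb{F}_q)=q\). There is nothing to add.
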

	
	Note that the first \(2\) coefficients in each of these terms (in bold) are consistent with \ref{Conjecture1}.
	
\section{Computations for $g=2$}

	Let \(\mathcal{A}_2\) be the moduli stack of principally polarized Abelian surfaces, which is a smooth Deligne-Mumford stack of dimension \(3\) over \(\mathbb{Z}\). Let \(\pi:\mathcal{X}_2\rightarrow\mathcal{A}_2\) be the universal Abelian surface over \(\mathcal{A}_2\) and let \(\mathbb{V}=\mathbb{R}^1\pi_*\mathbb{Q}_\ell\) be the \(\ell\)-adic local system on \(\mathcal{A}_2\) corresponding to the standard representation of \(\mathrm{Sp}_4\). For \(\lambda=(\lambda_1\geq\lambda_2\geq 0)\) a dominant integral highest weight for \(\mathrm{Sp}_4\) let \(\mathbb{V}_\lambda\) be the \(\ell\)-adic local system on \(\mathcal{A}_2\) corresponding to the irreducible representation of \(\mathrm{Sp}_4\) of highest weight \(\lambda\), occurring in \(\mathrm{Sym}^{\lambda_1-\lambda_2}(\mathbb{V})\otimes\mathrm{Sym}^{\lambda_2}(\wedge^2\mathbb{V})\). For \(\lambda_1+\lambda_2\) odd we have \(H^*(\mathcal{A}_2,\mathbb{V}_\lambda)=0\) since \(-\mathrm{id}\in\mathrm{Sp}_4(\mathbb{Z})\) acts by multiplication by \((-1)^{\lambda_1+\lambda_2}\) on the stalks of \(\mathbb{V}_{\lambda_1,\lambda_2}\). 
	
	Let \(\mathbb{S}_{\Gamma(1)}[\lambda_1-\lambda_2,\lambda_2+3]=\bigoplus_F\rho_F\) be the \(\ell\)-adic Galois representation corresponding to vector-valued Siegel cusp forms of weight \((\lambda_1-\lambda_2,\lambda_2+3)\) for \(\Gamma(1)=\mathrm{Sp}_4(\mathbb{Z})\): for each eigenform \(F\in S_{\lambda_1-\lambda_2,\lambda_2+3}(\Gamma(1))\) we have a \(4\)-dimensional \(\ell\)-adic Galois representation \(\rho_F\), and we have 
	\begin{align*}
		\mathrm{tr}(\mathrm{Frob}_p|\mathbb{S}_{\Gamma(1)}[\lambda_1-\lambda_2,\lambda_2+3])=\mathrm{tr}(T_p|S_{\lambda_1-\lambda_2,\lambda_2+3}(\Gamma(1)))
	\end{align*} 
	for every prime \(p\), which determines \(\mathbb{S}_{\Gamma(1)}[\lambda_1-\lambda_2,\lambda_2+3]\) as an element of the Grothendieck group of \(\ell\)-adic Galois representations. 
	
	As a representation of \(\mathrm{Gal}(\overline{\mathbb{Q}}/\mathbb{Q})\) the \(\ell\)-adic Galois representation \(\rho_F\) need not be irreducible: it is reducible for instance when \(F\in S_{0,k}(\Gamma(1))\) is the Saito-Kurokawa lift of a cusp form \(f\in S_{2k-2}(\Gamma(1))\) (see \cite[Theorem 21.1]{vdG} for a description of the Saito-Kurokawa lift), in which case \(\rho_F\simeq\rho_f+\mathbb{L}^{k-1}+\mathbb{L}^{k-2}\) up to semisimplification. On the other hand if \(F\in S_{\lambda_1-\lambda_2,\lambda_2+3}(\Gamma(1))\) is a vector-valued Siegel modular form of general type, the \(\ell\)-adic Galois representation \(\rho_F\) is irreducible as a representation of \(\mathrm{Gal}(\overline{\mathbb{Q}}/\mathbb{Q})\) and of \(\mathrm{Gal}(\overline{\mathbb{F}}_p/\mathbb{F}_p)\) (see \cite[Theorem I, Theorem II]{Weissauer}). Write \(\mathbb{S}^\mathrm{gen}_{\Gamma(1)}[\lambda_1-\lambda_2,\lambda_2+3]\) for the \(\ell\)-adic Galois representation corresponding to vector-valued Siegel cusp forms of general type. 
	
	By work of Petersen, using work of Harder \cite{Harder} and Flicker \cite{FlickerPGSp4} as input, we have the following:  

	\begin{proposition}
	\label{EichlerShimura2} \cite[Theorem 2.1]{Petersen} (compare to \cite[Conjecture 6.3]{BFvdG3}) for \(\lambda_1\geq\lambda_2\geq 0\) with \(\lambda_1+\lambda_2>0\) even we have
	\begin{align*}
		e_\mathrm{c}(\mathcal{A}_2,\mathbb{V}_{\lambda_1,\lambda_2}) &= -\mathbb{S}_{\Gamma(1)}[\lambda_1-\lambda_2,\lambda_2+3]+e_\mathrm{c,extr}(\mathcal{A}_2,\mathbb{V}_{\lambda_1,\lambda_2})
	\end{align*}
	as an element of the Grothendieck group of \(\ell\)-adic Galois representations, where \(e_\mathrm{c,extr}(\mathcal{A}_2,\mathbb{V}_{\lambda_1,\lambda_2})\) is given by
	\begin{align*}
		e_\mathrm{c,extr}(\mathcal{A}_2,\mathbb{V}_{\lambda_1,\lambda_2}) &= -s_{\Gamma(1)}[\lambda_1+\lambda_2+4]\mathbb{S}_{\Gamma(1)}[\lambda_1-\lambda_2+2]\mathbb{L}^{\lambda_2+1}\\
			&+ s_{\Gamma(1)}[\lambda_1-\lambda_2+2]-s_{\Gamma(1)}[\lambda_1+\lambda_2+4]\mathbb{L}^{\lambda_2+1}\\
			&+ \begin{cases} \mathbb{S}_{\Gamma(1)}[\lambda_2+2]+1 & \lambda_1\text{ even}\\ -\mathbb{S}_{\Gamma(1)}[\lambda_1+3] & \lambda_1\text{ odd} \end{cases}
	\end{align*}
	where \(s_{\Gamma(1)}[k]\) is the dimension of the space of cusp forms of weight \(k\) for \(\Gamma(1)=\mathrm{SL}_2(\mathbb{Z})\) (where we set \(\mathbb{S}_{\Gamma(1)}[2]:=-\mathbb{L}-1\) and \(s_{\Gamma(1)}[2]:=-1\)).
	\end{proposition}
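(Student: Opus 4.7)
The plan is to follow the Langlands-Kottwitz strategy implemented by Petersen, which rests on Harder's computation of Eisenstein cohomology for \(\mathrm{Sp}_4\) and Flicker's endoscopic transfer from \(\mathrm{PGSp}_4\) to \(\mathrm{GL}_4\). First I would decompose \(e_\mathrm{c}(\mathcal{A}_2,\mathbb{V}_{\lambda_1,\lambda_2})\) into an interior (cuspidal) contribution and an Eisenstein (boundary) contribution, using the comparison between compactly supported cohomology, intersection cohomology, and the cohomology of the Baily-Borel compactification \(\mathcal{A}_2^{\mathrm{BB}}\). This geometric splitting mirrors the Arthur decomposition of the discrete automorphic spectrum of \(\mathrm{GSp}_4\) into cuspidal and residual parts.

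For the interior contribution, I would apply the Langlands-Kottwitz method: stabilize the Grothendieck-Lefschetz trace formula for \(\mathrm{tr}(\mathrm{Frob}_p \mid H^\bullet_!(\mathcal{A}_2, \mathbb{V}_{\lambda_1,\lambda_2}))\) and match it term-by-term against the stable Arthur-Selberg trace formula for the discrete spectrum of \(\mathrm{GSp}_4\). Using Arthur's endoscopic classification together with Flicker's transfer, separate the discrete spectrum into (i) stable tempered representations corresponding to vector-valued Siegel cusp forms of general type, and (ii) CAP/Saito-Kurokawa representations lifted from elliptic cusp forms in \(S_{\lambda_1+\lambda_2+4}(\Gamma(1))\). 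Weissauer's theorem identifies the Galois representations attached to (i) and assembles them into \(-\mathbb{S}_{\Gamma(1)}[\lambda_1-\lambda_2,\lambda_2+3]\); the endoscopic contribution in (ii) produces the non-tempered term \(-s_{\Gamma(1)}[\lambda_1+\lambda_2+4]\mathbb{S}_{\Gamma(1)}[\lambda_1-\lambda_2+2]\mathbb{L}^{\lambda_2+1}\) after semisimplification, which must be tracked carefully to account for the reducibility of the Saito-Kurokawa Galois representation.

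For the boundary contribution, I would invoke Harder's analysis of Eisenstein cohomology. The boundary of \(\mathcal{A}_2^{\mathrm{BB}}\) consists of \(\mathcal{A}_1\) and a point, corresponding to the Siegel and Klingen parabolics of \(\mathrm{Sp}_4\). Restricting \(\mathbb{V}_{\lambda_1,\lambda_2}\) to the Levi subgroups \(\mathrm{GL}_2\) and \(\mathrm{GL}_1 \times \mathrm{SL}_2\) via Kostant's branching theorem, and then applying Proposition \ref{EichlerShimura1} to compute the cohomology of the resulting \(\mathrm{SL}_2\)-local systems on \(\mathcal{A}_1\), yields the Eisenstein terms \(s_{\Gamma(1)}[\lambda_1-\lambda_2+2]\) and \(-s_{\Gamma(1)}[\lambda_1+\lambda_2+4]\mathbb{L}^{\lambda_2+1}\). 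The parity-dependent correction (\(\mathbb{S}_{\Gamma(1)}[\lambda_2+2]+1\) for \(\lambda_1\) even, or \(-\mathbb{S}_{\Gamma(1)}[\lambda_1+3]\) for \(\lambda_1\) odd) arises according to whether the inducing data lies on a wall of the Weyl chamber, determining whether the corresponding Eisenstein series has a pole or is holomorphic at the relevant point and whether the image in cohomology is square-integrable.

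The hard part will be the stabilization of the trace formula, which requires the fundamental lemma for \(\mathrm{GSp}_4\) and its endoscopic groups (now known through the work of Ngô together with the reductions of Hales and Weissauer), along with the delicate separation of CAP and endoscopic contributions from generic ones in Arthur's classification. A secondary difficulty is the parity-dependent residue calculation for Eisenstein series, which must be executed by hand using Langlands' inner product formula since it depends on the relative position of the weight \((\lambda_1,\lambda_2)\) to the Weyl chamber walls; this is where Harder's explicit case analysis is indispensable.
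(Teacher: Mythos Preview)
The paper does not give its own proof of this proposition; it simply cites \cite[Theorem 2.1]{Petersen} and notes that Petersen's argument uses Harder \cite{Harder} and Flicker \cite{FlickerPGSp4} as input. Your sketch is therefore not being compared against a proof in the paper but against Petersen's original argument, and at that level your outline is broadly faithful: the decomposition into an interior (cuspidal/endoscopic) part handled via Langlands--Kottwitz and Flicker's transfer, and a boundary (Eisenstein) part handled via Harder's computation, is exactly the architecture Petersen uses.

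One point of caution: your description of the parity-dependent term as arising from ``whether the inducing data lies on a wall of the Weyl chamber'' and a residue calculation via Langlands' inner product formula is somewhat imprecise. In Petersen's actual argument (see \cite[4.3.3--4.3.5]{Petersen}), the dichotomy between \(\lambda_1\) even and \(\lambda_1\) odd comes from the structure of the Eisenstein cohomology attached to the Klingen parabolic, specifically from which Kostant representative contributes a nonvanishing \(\mathrm{SL}_2\)-local system on the boundary stratum \(\mathcal{A}_1\), combined with the location of the relevant pole of the intertwining operator. It is not a wall-crossing phenomenon in the usual sense. Similarly, the Saito--Kurokawa contribution you place under (ii) in the interior part is, in Petersen's treatment, handled by noting that CAP representations contribute to inner cohomology but not to \(L^2\)-cohomology in the expected degree, so the bookkeeping of where \(-s_{\Gamma(1)}[\lambda_1+\lambda_2+4]\mathbb{S}_{\Gamma(1)}[\lambda_1-\lambda_2+2]\mathbb{L}^{\lambda_2+1}\) sits requires more care than your sketch suggests. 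These are refinements rather than gaps, but if you were to write this out in full you would need to track them.
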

	
	This remains true for \((\lambda_1,\lambda_2)=(0,0)\) if we set \(\mathbb{S}_{\Gamma(1)}[0,3]:=-\mathbb{L}^3-\mathbb{L}^2-\mathbb{L}-1\): by \cite[Corollary 5.2.3]{LeeWeintraub} we have 
	\begin{align*}
		e_\mathrm{c}(\mathcal{A}_2,\mathbb{Q}_\ell)=\mathbb{L}^3+\mathbb{L}^2
	\end{align*}
	We will use the following values for the Euler characteristics \(e_\mathrm{c}(\mathcal{A}_2,\mathbb{V}_{\lambda_1,\lambda_2})\), which are obtained by combining \ref{EichlerShimura2} with the vanishing of the spaces \(S_{\lambda_1-\lambda_2,\lambda_2+3}(\Gamma(1))\) for all \(\lambda_1\geq\lambda_2\geq 0\) with \(\lambda_1,\lambda_2\leq 7\) except for \(\lambda_1=\lambda_2=7\):
	\begin{center}\begin{tabular}{|c|c|}
		\hline
		\((\lambda_1,\lambda_2)\) & \(e_\mathrm{c}(\mathcal{A}_2,\mathbb{V}_{\lambda_1,\lambda_2})\)\\
		\hline\hline
		\((0,0)\) & \(\mathbb{L}^3+\mathbb{L}^2\)\\
		\hline
		\((2,0)\) & \(-\mathbb{L}\)\\
		\((1,1)\) & \(-1\)\\
		\hline
		\((4,0)\) & \(-\mathbb{L}\)\\
		\((3,1)\) & \(0\)\\
		\((2,2)\) & \(0\)\\
		\hline
		\((6,0)\) & \(-\mathbb{L}\)\\
		\((5,1)\) & \(0\)\\
		\((4,2)\) & \(1\)\\
		\((3,3)\) & \(-1\)\\
		\hline
	\end{tabular}\qquad\begin{tabular}{|c|c|}
		\hline
		\((\lambda_1,\lambda_2)\) & \(e_\mathrm{c}(\mathcal{A}_2,\mathbb{V}_{\lambda_1,\lambda_2})\)\\
		\hline\hline
		\((7,1)\) & \(-\mathbb{L}^2\)\\
		\((6,2)\) & \(-\mathbb{L}^3+1\)\\
		\((5,3)\) & \(-\mathbb{L}^4\)\\
		\((4,4)\) & \(\mathbb{L}^6\)\\
		\hline
		\((7,3)\) & \(0\)\\
		\((6,4)\) & \(1\)\\
		\((5,5)\) & \(-1\)\\
		\hline
		\((7,5)\) & \(-\mathbb{L}^6\)\\
		\((6,6)\) & \(\mathbb{L}^8\)\\
		\hline
		\((7,7)\) & \(-\mathbb{S}_{\Gamma(1)}[18]-\mathbb{L}^8-1\)\\
		\hline
	\end{tabular}\end{center}
	The space \(S_{0,10}(\Gamma(1))\) is spanned by the Igusa cusp form (see \cite{OberdieckPixton}):
	\begin{align*}
		\chi_{10} &= (q^{-1}-2+q)q_1q_2-(2q^{-2}+16q^{-1}-36+16q+2q^2)(q^2_1q_2+q_1q^2_2)\\
			&+ (q^{-3}+36q^{-2}+99q^{-1}-272+99q+36q^2+q^3)(q^3_1q_2+q_1q^3_2)\\
			&+ (4q^{-3}+72q^{-2}+252q^{-1}-656+252q+72q^2+4q^3)q^2_1q^2_2+\hdots
	\end{align*}
	which is a Saito-Kurokawa lift of the weight \(18\) cusp form \(f_{18}=\Delta E_6\in S_{18}(\Gamma(1))\) and contributes an irreducible \(2\)-dimensional \(\ell\)-adic Galois representation \(\mathbb{S}_{\Gamma(1)}[18]\) to \(H^3(\mathcal{A}_2,\mathbb{V}_{7,7})\) (see for example \cite[4.3.5]{Petersen}) with the property that \(\mathrm{tr}(\mathrm{Frob}_p|\mathbb{S}_{\Gamma(1)}[18])=\lambda_p(f_{18})\) (the eigenvalue of the Hecke operator \(T_p\) on \(f_{18}\)), which is not polynomial in \(p\); the remaining summands \(\mathbb{L}^9\) and \(\mathbb{L}^8\) of the \(4\)-dimensional \(\ell\)-adic Galois representation \(\mathbb{S}_{\Gamma(1)}[0,10]=\mathbb{S}_{\Gamma(1)}[18]+\mathbb{L}^9+\mathbb{L}^8\) do not contribute to \(H^3(\mathcal{A}_2,\mathbb{V}_{7,7})\). 
	
	We will use another contribution which does not appear in the above table but which was mentioned in the introduction. The space \(S_{6,8}(\Gamma(1))\) is spanned by the vector-valued cusp form (see \cite[Section 8]{CleryvdG})
	\begin{align*}
		\chi_{6,8} &= \left(\begin{smallmatrix} 0\\ 0\\ q^{-1}-2+q\\ 2(q-q^{-1})\\ q^{-1}-2+q\\ 0\\ 0 \end{smallmatrix}\right)q_1q_2+\left(\begin{smallmatrix} 0\\ 0\\ -2(q^{-2}+8q^{-1}-18+8q+q^2)\\ 8(q^{-2}+4q^{-1}-4q-q^2)\\ -2(7q^{-2}-4q^{-1}-6-4q+7q^2)\\ 12(q^{-2}-2q^{-1}+2q-q^2)\\ -4(q^{-2}-4q^{-1}+6-4q+q^2) \end{smallmatrix}\right)q_1q^2_2\\
			&+ \left(\begin{smallmatrix} -4(q^{-2}-4q^{-1}+6-4q+q^2)\\ 12(q^{-2}-2q^{-1}+2q-q^2)\\ -2(7q^{-2}-4q^{-1}-6-4q+7q^2)\\ 8(q^{-2}+4q^{-1}-4q-q^2)\\ -2(q^{-2}+8q^{-1}-18+8q+q^2)\\ 0\\ 0 \end{smallmatrix}\right)q^2_1q_2+\left(\begin{smallmatrix} 16(q^{-3}-9q^{-1}+16-9q+q^3)\\ -72(q^{-3}-3q^{-1}+3q-q^3)\\ 128(q^{-3}-2+q^3)\\ -144(q^{-3}+5q^{-1}-5q-q^3)\\ 128(q^{-3}-2+q^3)\\ -72(q^{-3}-3q^{-1}+3q-q^3)\\ 16(q^{-3}-9q^{-1}+16-9q+q^3) \end{smallmatrix}\right)q^2_1q^2_2+\hdots
	\end{align*}
	which is of general type and contributes an irreducible \(4\)-dimensional \(\ell\)-adic Galois representation \(\mathbb{S}_{\Gamma(1)}[6,8]\) to \(H^3_\mathrm{c}(\mathcal{A}_2,\mathbb{V}_{11,5})\) (see for example \cite[4.3.1]{Petersen}) with the property that \(\mathrm{tr}(\mathrm{Frob}_p|\mathbb{S}_{\Gamma(1)}[6,8])=\lambda_p(\chi_{6,8})\) (the eigenvalue of the Hecke operator \(T_p\) acting on \(\chi_{6,8}\)) which is not polynomial in \(p\). 
	

	We obtain the following result: 
	
	\pagebreak
	
	\begin{theorem}
	\label{Theorem2} The cohomology \(H^i(\mathcal{X}^{\times n}_2,\mathbb{Q}_\ell)\) is Tate type for all \(i\) and all \(1\leq n\leq 6\) (see table 2). In this range the compactly supported Euler characteristics are given by: 
	\begin{align*}
		e_\mathrm{c}(\mathcal{X}_2,\mathbb{Q}_\ell) &= \mathbb{L}^5+2\mathbb{L}^4+2\mathbb{L}^3+\mathbb{L}^2-1\\
		e_\mathrm{c}(\mathcal{X}^{\times 2}_2,\mathbb{Q}_\ell) &= \mathbb{L}^7+4\mathbb{L}^6+9\mathbb{L}^5+9\mathbb{L}^4+3\mathbb{L}^3-5\mathbb{L}^2-5\mathbb{L}-3\\
		e_\mathrm{c}(\mathcal{X}^{\times 3}_2,\mathbb{Q}_\ell) &= \mathbb{L}^9+7\mathbb{L}^8+27\mathbb{L}^7+49\mathbb{L}^6+46\mathbb{L}^5+3\mathbb{L}^4-42\mathbb{L}^3-53\mathbb{L}^2-24\mathbb{L}-7\\
		e_\mathrm{c}(\mathcal{X}^{\times 4}_2,\mathbb{Q}_\ell) &= \mathbb{L}^{11}+11\mathbb{L}^{10}+65\mathbb{L}^9+191\mathbb{L}^8+320\mathbb{L}^7+257\mathbb{L}^6\\
			&- 65\mathbb{L}^5-425\mathbb{L}^4-474\mathbb{L}^3-273\mathbb{L}^2-73\mathbb{L}-14\\
		e_\mathrm{c}(\mathcal{X}^{\times 5}_2,\mathbb{Q}_\ell) &= \mathbb{L}^{13}+16\mathbb{L}^{12}+135\mathbb{L}^{11}+590\mathbb{L}^{10}+1525\mathbb{L}^9+2292\mathbb{L}^8+1527\mathbb{L}^7\\
			&- 1285\mathbb{L}^6-4219\mathbb{L}^5-4730\mathbb{L}^4-2814\mathbb{L}^3-923\mathbb{L}^2-135\mathbb{L}-21\\
		e_\mathrm{c}(\mathcal{X}^{\times 6}_2,\mathbb{Q}_\ell) &= \mathbb{L}^{15}+22\mathbb{L}^{14}+252\mathbb{L}^{13}+1540\mathbb{L}^{12}+5683\mathbb{L}^{11}+13035\mathbb{L}^{10}+17779\mathbb{L}^9+8660\mathbb{L}^8\\
			&- 17614\mathbb{L}^7-44408\mathbb{L}^6-48770\mathbb{L}^5-30667\mathbb{L}^4-10437\mathbb{L}^3-1391\mathbb{L}^2+142\mathbb{L}+2
	\end{align*}
	The cohomology \(H^i(\mathcal{X}^{\times 7}_2,\mathbb{Q}_\ell)\) is Tate type for all \(i\neq 17\) (see table 2), whereas for \(i=17\) we have
	\begin{align*}
		H^{17}(\mathcal{X}^{\times 7}_2,\mathbb{Q}_\ell)=\mathbb{S}_{\Gamma(1)}[18]+\mathbb{L}^{17}+1176\mathbb{L}^{15}+63700\mathbb{L}^{13}+6860\mathbb{L}^{12}+321048\mathbb{L}^{11}+294440\mathbb{L}^{10}+\mathbb{L}^9
	\end{align*}
	where \(\mathbb{S}_{\Gamma(1)}[18]\) is the \(2\)-dimensional \(\ell\)-adic Galois representation attached to the weight \(18\) cusp form \(f_{18}=\Delta E_6\in S_{18}(\Gamma(1))\). In this case the compactly supported Euler characteristic is given by: 
	\begin{align*}
		e_\mathrm{c}(\mathcal{X}^{\times 7}_2,\mathbb{Q}_\ell) &= -\mathbb{S}_{\Gamma(1)}[18]\\
			&+ \mathbb{L}^{17}+29\mathbb{L}^{16}+434\mathbb{L}^{15}+3542\mathbb{L}^{14}+17717\mathbb{L}^{13}+56924\mathbb{L}^{12}+118692\mathbb{L}^{11}+145567\mathbb{L}^{10}+37850\mathbb{L}^9\\
			&- 226570\mathbb{L}^8-487150\mathbb{L}^7-529851\mathbb{L}^6-342930\mathbb{L}^5-121324\mathbb{L}^4-9491\mathbb{L}^3+9018\mathbb{L}^2+3164\mathbb{L}+223
	\end{align*}
	In particular the compactly supported Euler characteristic \(e_\mathrm{c}(\mathcal{X}^{\times n}_2,\mathbb{Q}_\ell)\) is not Tate type if \(n\geq 7\).
	\end{theorem}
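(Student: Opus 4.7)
The plan is to apply the machinery from Section 1: by Proposition \ref{Leray} and Proposition \ref{Kunneth} we have the reduction
\begin{align*}
	e_\mathrm{c}(\mathcal{X}^{\times n}_2,\mathbb{Q}_\ell)=\sum_\lambda f^n_\lambda(\mathbb{L})\,e_\mathrm{c}(\mathcal{A}_2,\mathbb{V}_\lambda),
\end{align*}
where $f^n_\lambda(\mathbb{L})$ is a polynomial in $\mathbb{L}$ recording the multiplicities $m^{j,n}_\lambda$ of $\mathbb{V}_\lambda$ inside $\mathbb{R}^j\pi^n_*\mathbb{Q}_\ell$. To compute these multiplicities I would use the $\mathrm{Sp}_4$ decompositions $\wedge^0\mathbb{V}=\mathbb{Q}_\ell$, $\wedge^1\mathbb{V}=\mathbb{V}_{1,0}$, $\wedge^2\mathbb{V}=\mathbb{V}_{1,1}\oplus\mathbb{L}$, $\wedge^3\mathbb{V}=\mathbb{V}_{1,0}\otimes\mathbb{L}$, $\wedge^4\mathbb{V}=\mathbb{L}^2$, and then iteratively expand the K\"unneth summands $\wedge^{j_1}\mathbb{V}\otimes\cdots\otimes\wedge^{j_n}\mathbb{V}$ into irreducibles via the $\mathrm{Sp}_4$ tensor product rule (Littlewood-Richardson with the Koike-Terada modification giving the Newell-Littlewood numbers). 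Since each factor $\wedge^{j_i}\mathbb{V}$ contributes highest weight sum at most $2$ and first coordinate at most $1$, only local systems $\mathbb{V}_{\lambda_1,\lambda_2}$ with $\lambda_1+\lambda_2\le 2n$ and $\lambda_1\le n$ occur, and the parity vanishing $H^*(\mathcal{A}_2,\mathbb{V}_\lambda)=0$ for $|\lambda|$ odd restricts further to $|\lambda|$ even; for $1\le n\le 7$ this yields precisely the $\lambda$ collected in the table above.

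For $1\le n\le 6$, every relevant $\lambda$ satisfies $\lambda_1\le 6$ and $|\lambda|\le 12$, and one checks that every $\mathbb{S}_{\Gamma(1)}[\cdot]$ appearing in Proposition \ref{EichlerShimura2} vanishes: the main Siegel term $-\mathbb{S}_{\Gamma(1)}[\lambda_1-\lambda_2,\lambda_2+3]$ is zero since no vector-valued Siegel cusp form lives in this range (the first ones, $\chi_{10}\in S_{0,10}$ and $\chi_{6,8}\in S_{6,8}$, require $(\lambda_1,\lambda_2)=(7,7)$ and $(11,5)$ respectively), and the elliptic terms $\mathbb{S}_{\Gamma(1)}[\lambda_1-\lambda_2+2]$, $\mathbb{S}_{\Gamma(1)}[\lambda_2+2]$, $\mathbb{S}_{\Gamma(1)}[\lambda_1+3]$ in $e_\mathrm{c,extr}$ all vanish because each of these weights is at most $9<12$ where $S_k(\Gamma(1))=0$. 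So each $e_\mathrm{c}(\mathcal{A}_2,\mathbb{V}_\lambda)$ is Tate type in this range, and substitution into the reduction formula yields the displayed Euler characteristics. Tate-ness of each individual $H^i(\mathcal{X}^{\times n}_2,\mathbb{Q}_\ell)$ follows degree-by-degree via Leray from the analogous statement for each $H^i(\mathcal{A}_2,\mathbb{V}_\lambda)$ in the range.

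For $n=7$ the highest weight $(7,7)$ appears for the first time, with multiplicity one arising uniquely from the top component $\mathbb{V}_{1,1}^{\otimes 7}\subset(\wedge^2\mathbb{V})^{\otimes 7}$ in $\mathbb{R}^{14}\pi^7_*\mathbb{Q}_\ell$. By Proposition \ref{EichlerShimura2} we have $e_\mathrm{c}(\mathcal{A}_2,\mathbb{V}_{7,7})=-\mathbb{S}_{\Gamma(1)}[18]-\mathbb{L}^8-1$, carrying the first non-Tate piece: the $2$-dimensional Galois representation attached to $f_{18}=\Delta E_6$ via the Saito-Kurokawa lift $\chi_{10}$, sitting in $H^3(\mathcal{A}_2,\mathbb{V}_{7,7})$ and hence in $H^{17}(\mathcal{X}^{\times 7}_2,\mathbb{Q}_\ell)$ by Leray. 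For $n\ge 7$, the K\"unneth formula propagates $\mathbb{V}_{7,7}$ into $\mathbb{R}^{14}\pi^n_*\mathbb{Q}_\ell$ with positive multiplicity (e.g.\ by appending trivial factors $\wedge^0\mathbb{V}$), so that the contribution $-f^n_{(7,7)}(\mathbb{L})\,\mathbb{S}_{\Gamma(1)}[18]$ to the Euler characteristic is non-zero. Other potential $\mathbb{S}_{\Gamma(1)}[18]$-contributions coming from $\mathbb{S}_{\Gamma(1)}[\lambda_1-\lambda_2+2]$, $\mathbb{S}_{\Gamma(1)}[\lambda_2+2]$, or $\mathbb{S}_{\Gamma(1)}[\lambda_1+3]$ in $e_\mathrm{c,extr}$ require $\lambda_1\ge 15$ or $\lambda_2\ge 16$, and come with prescribed Tate twists $\mathbb{L}^{\lambda_2+1}$; non-cancellation against these follows by separating Frobenius weights in the purity decomposition (each Tate twist $\mathbb{S}_{\Gamma(1)}[18](-k)$ has distinct weight $17+2k$), arguing as in the proof of Theorem \ref{Theorem1}.

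The main obstacle is the combinatorial bookkeeping of the multiplicities $m^{j,n}_\lambda$ across the full $(j,n,\lambda)$ range, best handled by computer algebra building on the Sage code of \cite{Lee}, together with the entry-by-entry verification that no non-Tate Galois contribution slips into the $e_\mathrm{c,extr}$ terms for any $\lambda$ with $\lambda_1\le 6$ occurring in the decomposition.
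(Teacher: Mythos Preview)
Your computational strategy for $1\le n\le 7$ matches the paper's: combine the Leray degeneration, the K\"unneth decomposition into $\mathrm{Sp}_4$-irreducibles, and Petersen's formula \ref{EichlerShimura2}, with the multiplicities $m^{j,n}_\lambda$ handled by machine. Your vanishing check for the modular pieces when $\lambda_1\le 6$ is correct and gives Tate type in that range.

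The gap is in your non-cancellation argument for $n\ge 7$. Two problems: first, your claim that the competing $\mathbb{S}_{\Gamma(1)}[18]$-contributions in $e_\mathrm{c,extr}$ ``come with prescribed Tate twists $\mathbb{L}^{\lambda_2+1}$'' is false---only the term $\mathbb{S}_{\Gamma(1)}[\lambda_1-\lambda_2+2]$ carries that twist, while $\mathbb{S}_{\Gamma(1)}[\lambda_2+2]$ and $\mathbb{S}_{\Gamma(1)}[\lambda_1+3]$ do not. Second, even granting twists, the polynomial factors $f^n_\lambda(\mathbb{L})$ themselves introduce arbitrary Tate shifts, so a pure weight-separation argument cannot isolate the $(7,7)$ contribution. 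Your appeal to ``arguing as in the proof of Theorem~\ref{Theorem1}'' is also off: that proof uses \emph{parity of cohomological degree}, not Frobenius weight.

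The paper's fix is a two-step argument. For $7\le n\le 15$ it uses parity: $\mathbb{S}_{\Gamma(1)}[18]$ sits in $H^3(\mathcal{A}_2,\mathbb{V}_{7,7})$, hence in odd-degree cohomology of $\mathcal{X}^{\times n}_2$; any cancelling copy would have to live in $H^j(\mathcal{A}_2,\mathbb{V}_\lambda)$ for $j\in\{0,2,4\}$, and the only $2$-dimensional irreducibles there are $\mathbb{S}_{\Gamma(1)}[\lambda_2+2]\mathbb{L}^{\lambda_1+2}$ in $H^4$, forcing $\lambda_2=16>n$. For $n\ge 11$ the paper switches to the irreducible $4$-dimensional $\mathbb{S}_{\Gamma(1)}[6,8]$ in $H^3(\mathcal{A}_2,\mathbb{V}_{11,5})$, which can \emph{never} be cancelled since no $4$-dimensional irreducible appears in $H^j(\mathcal{A}_2,\mathbb{V}_\lambda)$ for $j$ even. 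The two ranges overlap, covering all $n\ge 7$. Your argument as written does not cover $n\ge 15$.
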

	\begin{proof}
	Follows by combining \ref{Leray} and \ref{Kunneth} with \ref{EichlerShimura2}. In this case we computed the multiplicities \(m^{j,n}_\lambda\) with a SAGE program (available on request). 
	
	To argue that \(e_\mathrm{c}(\mathcal{X}^{\times n}_2,\mathbb{Q}_\ell)\) is not Tate type if \(n\geq 7\) note that \(H^{17}(\mathcal{X}^{\times 7}_2,\mathbb{Q}_\ell)\) (which is not Tate type, owing to the irreducible \(2\)-dimensional contribution \(\mathbb{S}_{\Gamma(1)}[18]\) to \(H^3(\mathcal{A}_2,\mathbb{V}_{7,7})\)) appears as a summand in \(H^{17}(\mathcal{X}^{\times n}_2,\mathbb{Q}_\ell)\) for all \(n\geq 7\) by the K\"unneth formula. This contribution cannot be cancelled in the Euler characteristic, at least for \(7\leq n\leq 15\): since the contribution occurs in \(H^i(\mathcal{X}^{\times n}_2,\mathbb{Q}_\ell)\) for \(i\) odd, any contribution leading to cancellation would have to occur in \(H^i(\mathcal{X}^{\times n}_2,\mathbb{Q}_\ell)\) for \(i\) even. Since \(H^*(\mathcal{A}_2,\mathbb{V}_{\lambda_1,\lambda_2})=0\) for \(\lambda_1+\lambda_2>0\) odd, any contribution to \(H^i(\mathcal{X}^{\times n}_2,\mathbb{Q}_\ell)\) for \(i\) even would have to come from a contribution to \(H^j(\mathcal{A}_2,\mathbb{V}_{\lambda_1,\lambda_2})\) for \(j=0,2,4\) (since \(H^6(\mathcal{A}_2,\mathbb{V}_{\lambda_1,\lambda_2})=0\) for all \(\lambda_1\geq\lambda_2\geq 0\)). The only irreducible \(2\)-dimensional contributions that occur in this way come from the contribution \(\mathbb{S}_{\Gamma(1)}[\lambda_2+2]\mathbb{L}^{\lambda_1+2}\) to \(H^4(\mathcal{A}_2,\mathbb{V}_{\lambda_1,\lambda_2})\) (Poincare dual to the contribution \(\mathbb{S}_{\Gamma(1)}[\lambda_2+2]\) to \(H^2_\mathrm{c}(\mathcal{A}_2,\mathbb{V}_{\lambda_1,\lambda_2})\) in \cite[Theorem 2.1]{Petersen}), which would require \(\lambda_2=16\) for cancellation. 
	
	Now note that \(H^{19}(\mathcal{X}^{\times 11}_2,\mathbb{Q}_\ell)\) (which is not Tate type, owing to the irreducible \(4\)-dimensional contribution \(\mathbb{S}_{\Gamma(1)}[6,8]\) to \(H^3(\mathcal{A}_2,\mathbb{V}_{11,5})\)) appears as a summand in \(H^{19}(\mathcal{X}^{\times n}_2,\mathbb{Q}_\ell)\) for all \(n\geq 11\) by the K\"unneth formula. This contribution cannot be cancelled in the Euler characteristic: by the same reasoning as above any contribution leading to cancellation would have to come from a contribution to \(H^j(\mathcal{A}_2,\mathbb{V}_{\lambda_1,\lambda_2})\) for \(j=0,2,4\), but there are no irreducible \(4\)-dimensional contributions in this case: the only irreducible \(4\)-dimensional contributions come from the contribution \(\mathbb{S}^\mathrm{gen}_{\Gamma(1)}[\lambda_1-\lambda_2,\lambda_2+3]\) to \(H^3(\mathcal{A}_2,\mathbb{V}_{\lambda_1,\lambda_2})\) (Poincare dual to the contribution \(\mathbb{S}^\mathrm{gen}_{\Gamma(1)}[\lambda_1-\lambda_2,\lambda_2+3]\) to \(H^3_\mathrm{c}(\mathcal{A}_2,\mathbb{V}_{\lambda_1,\lambda_2})\) in \cite[Theorem 2.1]{Petersen}). 
	\end{proof}
	
	Note that the contribution \(\mathbb{S}_{\Gamma(1)}[18]\) should always persist, but we cannot argue this without estimates on the multiplicities \(m^{j,n}_\lambda\). 
	
	We obtain the following corollary:
	
	\begin{corollary}
	\label{MGF2} The first \(6\) terms of the moment generating function \(M_{\#A_2(\mathbb{F}_q)}(t)\) are rational functions in \(q\): 
	\begin{align*}
		1 &+ (\mathbf{q^2+q+1}-\frac{1}{q^3+q^2})t\\
			&+ (\mathbf{q^4+3q^3+6q^2}+3q-\frac{5q^2+5q+3}{q^3+q^2})\frac{t^2}{2!}\\
			&+ (\begin{matrix} \mathbf{q^6+6q^5+21q^4}+28q^3\\ +18q^2-15q-27 \end{matrix}-\frac{26q^2+24q+7}{q^3+q^2})\frac{t^3}{3!}\\
			&+ (\begin{matrix} \mathbf{q^8+10q^7+55q^6}+136q^5+184q^4\\ +73q^3-138q^2-287q-187 \end{matrix}-\frac{86q^2+73q+14}{q^3+q^2})\frac{t^4}{4!}\\
			&+ (\begin{matrix} \mathbf{q^{10}+15q^9+120q^8}+470q^7+1055q^6+1237q^5\\ +290q^4-1575q^3-2644q^2-2086q-728 \end{matrix}-\frac{195q^2+135q+21}{q^3+q^2})\frac{t^5}{5!}\\
			&+ (\begin{matrix} \mathbf{q^{12}+21q^{11}+231q^{10}}+1309q^9+4374q^8+8661q^7+9118q^6\\ -458 q^5-17156q^4-27252q^3-21518q^2-9149q-1288 \end{matrix}-\frac{103q^2-142q-2}{q^3+q^2})\frac{t^6}{6!}
	\end{align*}
	\end{corollary}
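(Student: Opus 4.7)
The proof is an immediate consequence of Theorem \ref{Theorem2} together with the Grothendieck--Lefschetz trace formula (Proposition \ref{GrothendieckLefschetz}) and the identity $\#\mathcal{A}_2(\mathbb{F}_q)=q^3+q^2$ recorded in the introduction. My plan is as follows.

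First, for each $1\leq n\leq 6$, I apply Proposition \ref{GrothendieckLefschetz} to the explicit expression for $e_\mathrm{c}(\mathcal{X}^{\times n}_2,\mathbb{Q}_\ell)$ given in Theorem \ref{Theorem2}. Since that theorem asserts $e_\mathrm{c}(\mathcal{X}^{\times n}_2,\mathbb{Q}_\ell)$ is of Tate type in this range, it is a $\mathbb{Z}$-linear combination of Lefschetz motives $\mathbb{L}^k=\mathbb{Q}_\ell(-k)$, and $\mathrm{tr}(\mathrm{Frob}_q\mid \mathbb{L}^k)=q^k$. Consequently $\#\mathcal{X}^{\times n}_2(\mathbb{F}_q)$ is obtained simply by the substitution $\mathbb{L}\mapsto q$ in the polynomials displayed in Theorem \ref{Theorem2}, producing a polynomial in $q$ of degree $\dim(\mathcal{X}^{\times n}_2)=3+2n$.

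Second, I use the relation $\mathbb{E}(\#A_2(\mathbb{F}_q)^n)=\#\mathcal{X}^{\times n}_2(\mathbb{F}_q)/\#\mathcal{A}_2(\mathbb{F}_q)$ from the proposition in Section 1 together with the Lee--Weintraub formula $\#\mathcal{A}_2(\mathbb{F}_q)=q^3+q^2$. Performing polynomial long division of each $\#\mathcal{X}^{\times n}_2(\mathbb{F}_q)$ by $q^3+q^2$ yields a polynomial quotient of degree $2n$ plus a remainder $r_n(q)$ of degree at most $2$, producing the displayed rational expression of the form (polynomial in $q$) $-\,r_n(q)/(q^3+q^2)$. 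Assembling these into $\sum_{n=0}^{6}\mathbb{E}(\#A_2(\mathbb{F}_q)^n)\tfrac{t^n}{n!}$ gives the stated first six terms of $M_{\#A_2(\mathbb{F}_q)}(t)$.

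The substantive work has already been done in Theorem \ref{Theorem2}; what remains here is genuinely mechanical. The only obstacle worth naming is arithmetic reliability: one must divide six polynomials of growing size by $q^3+q^2$ and track constants accurately. I would carry out the divisions using the same SAGE program used to compute the multiplicities $m^{j,n}_\lambda$ in the proof of Theorem \ref{Theorem2}, and as a consistency check I would verify that the top three coefficients (those shown in bold) of the polynomial part of the $n$-th expected value equal $q^{2n}+\binom{n(n+1)/2}{1}q^{2n-1}+\binom{n(n+1)/2+1}{2}q^{2n-2}$, matching the leading behavior predicted by Conjecture \ref{Conjecture1} through the expansion of $\lambda^{n(n+1)/2}=(1-q^{-1})^{-n(n+1)/2}$.
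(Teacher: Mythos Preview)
Your proposal is correct and follows exactly the approach the paper intends: the corollary is stated without proof as an immediate consequence of Theorem~\ref{Theorem2}, obtained by substituting $\mathbb{L}\mapsto q$ in the Tate-type Euler characteristics and dividing by $\#\mathcal{A}_2(\mathbb{F}_q)=q^3+q^2$. Your added consistency check against the leading coefficients predicted by Conjecture~\ref{Conjecture1} is precisely the observation the paper records immediately after the corollary.
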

	
	Note that the first \(3\) coefficients in each of these terms (in bold) are consistent with \ref{Conjecture1}. 
	
\section{Computations for $g=3$}

	Let \(\mathcal{A}_3\) be the moduli stack of principally polarized Abelian threefolds, which is a smooth Deligne-Mumford stack of dimension \(6\) over \(\mathbb{Z}\). Let \(\pi:\mathcal{X}_3\rightarrow\mathcal{A}_3\) be the universal Abelian threefold over \(\mathcal{A}_3\) and let \(\mathbb{V}=\mathbb{R}^1\pi_*\mathbb{Q}_\ell\) be the \(\ell\)-adic local system on \(\mathcal{A}_3\) corresponding to the standard representation of \(\mathrm{Sp}_6\). For \(\lambda=(\lambda_1\geq\lambda_2\geq\lambda_3\geq 0)\) a dominant integral highest weight for \(\mathrm{Sp}_6\) let \(\mathbb{V}_\lambda\) be the \(\ell\)-adic local system on \(\mathcal{A}_3\) corresponding to the irreducible representation of \(\mathrm{Sp}_6\) of highest weight \(\lambda\), occurring in \(\mathrm{Sym}^{\lambda_1-\lambda_2}(\mathbb{V})\otimes\mathrm{Sym}^{\lambda_2-\lambda_3}(\wedge^2\mathbb{V})\otimes\mathrm{Sym}^{\lambda_3}(\wedge^3\mathbb{V})\). For \(\lambda_1+\lambda_2+\lambda_3\) odd we have \(H^*(\mathcal{A}_3,\mathbb{V}_\lambda)=0\) since \(-\mathrm{id}\in\mathrm{Sp}_6(\mathbb{Z})\) acts by multiplication by \((-1)^{\lambda_1+\lambda_2+\lambda_3}\) on the stalks of \(\mathbb{V}_{\lambda_1,\lambda_2,\lambda_3}\). 
	
	Let \(\mathbb{S}_{\Gamma(1)}[\lambda_1-\lambda_2,\lambda_2-\lambda_3,\lambda_3+4]=\bigoplus_F\rho_F\) be the \(\ell\)-adic Galois representation corresponding to vector-valued Siegel cusp forms of weight \((\lambda_1-\lambda_2,\lambda_2-\lambda_3,\lambda_3+4)\) for \(\Gamma(1)=\mathrm{Sp}_6(\mathbb{Z})\): for each eigenform \(F\in S_{\lambda_1-\lambda_2,\lambda_2-\lambda_3,\lambda_3+4}(\Gamma(1))\) we have an \(8\)-dimensional \(\ell\)-adic Galois representation \(\rho_F\), and we have 
	\begin{align*}
		\mathrm{tr}(\mathrm{Frob}_p|\mathbb{S}_{\Gamma(1)}[\lambda_1-\lambda_2,\lambda_2-\lambda_3,\lambda_3+4])=\mathrm{tr}(T_p|S_{\lambda_1-\lambda_2,\lambda_2-\lambda_3,\lambda_3+4}(\Gamma(1)))
	\end{align*} 
	for every prime \(p\), which determines \(\mathbb{S}_{\Gamma(1)}[\lambda_1-\lambda_2,\lambda_2-\lambda_3,\lambda_3+4]\) as an element of the Grothendieck group of \(\ell\)-adic Galois representations. 
	
	As a representation of \(\mathrm{Gal}(\overline{\mathbb{Q}}/\mathbb{Q})\) the \(\ell\)-adic Galois representation \(\rho_F\) need not be irreducible, for example if \(F\) is one of the lifts predicted by \cite[Conjecture 7.7]{BFvdG3}. On the other hand if \(F\in S_{\lambda_1-\lambda_2,\lambda_2-\lambda_3,\lambda_3+4}(\Gamma(1))\) is a vector-valued Siegel cusp form of general type, the \(\ell\)-adic Galois representation \(\rho_F\) is predicted to be irreducible as a representation of \(\mathrm{Gal}(\overline{\mathbb{Q}}/\mathbb{Q})\) and of \(\mathrm{Gal}(\overline{\mathbb{F}}_p/\mathbb{F}_p)\). Write \(\mathbb{S}^\mathrm{gen}_{\Gamma(1)}[\lambda_1-\lambda_2,\lambda_2-\lambda_3,\lambda_3+4]\) for the \(\ell\)-adic Galois representation corresponding to vector-valued Siegel cusp forms of general type. 
	
	By work of Bergstr\"om-Faber-van der Geer, one conjectures the following: 
	
	\begin{conjecture}
	\label{EichlerShimura3} \cite[Conjecture 7.1]{BFvdG3} For \(\lambda_1\geq\lambda_2\geq\lambda_3\) with \(\lambda_1+\lambda_2+\lambda_3>0\) even we have
	\begin{align*}
		e_\mathrm{c}(\mathcal{A}_3,\mathbb{V}_{\lambda_1,\lambda_2,\lambda_3}) &= \mathbb{S}_{\Gamma(1)}[\lambda_1-\lambda_2,\lambda_2-\lambda_3,\lambda_3+4]+e_\mathrm{c,extr}(\mathcal{A}_3,\mathbb{V}_{\lambda_1,\lambda_2,\lambda_3})
	\end{align*}
	as an element of the Grothendieck group of \(\ell\)-adic Galois representations where \(e_\mathrm{c,extr}(\mathcal{A}_3,\mathbb{V}_{\lambda_1,\lambda_2,\lambda_3}) \) is given by 
	\begin{align*}
		e_\mathrm{c,extr}(\mathcal{A}_3,\mathbb{V}_{\lambda_1,\lambda_2,\lambda_3}) &= -e_\mathrm{c}(\mathcal{A}_2,\mathbb{V}_{\lambda_1+1,\lambda_2+1})-e_\mathrm{c,extr}(\mathcal{A}_2,\mathbb{V}_{\lambda_1+1,\lambda_2+1})\otimes\mathbb{S}_{\Gamma(1)}[\lambda_3+2]\\
			&+ e_\mathrm{c}(\mathcal{A}_2,\mathbb{V}_{\lambda_1+1,\lambda_3})+e_\mathrm{c,extr}(\mathcal{A}_2,\mathbb{V}_{\lambda_1+1,\lambda_3})\otimes\mathbb{S}_{\Gamma(1)}[\lambda_2+3]\\
			&- e_\mathrm{c}(\mathcal{A}_2,\mathbb{V}_{\lambda_2,\lambda_3})-e_\mathrm{c,extr}(\mathcal{A}_2,\mathbb{V}_{\lambda_2,\lambda_3})\otimes\mathbb{S}_{\Gamma(1)}[\lambda_1+4]
	\end{align*}
	\end{conjecture}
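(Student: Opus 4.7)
The plan is to apply the Langlands-Kottwitz method to $\mathcal{A}_3$, viewed as the Shimura variety associated to $\mathrm{GSp}_6$, comparing the Grothendieck-Lefschetz trace formula with the stabilization of the Arthur-Selberg trace formula. This is the same strategy used by Petersen \cite{Petersen} for $g=2$, building on Harder and Flicker. The output is a decomposition of $e_\mathrm{c}(\mathcal{A}_3,\mathbb{V}_\lambda)$ in the Grothendieck group of $\ell$-adic Galois representations into an ``inner'' part coming from cuspidal automorphic representations of $\mathrm{GSp}_6$ and a ``boundary'' or Eisenstein part coming from parabolic induction from Levis of the three conjugacy classes of proper maximal parabolic subgroups of $\mathrm{Sp}_6$. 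The inner part, via Arthur's endoscopic classification applied to $\mathrm{GSp}_6$ (work of Ta\"ibi and Kret-Shin), should decompose over Arthur packets: generic packets contribute the $8$-dimensional piece $\mathbb{S}^\mathrm{gen}_{\Gamma(1)}[\lambda_1-\lambda_2,\lambda_2-\lambda_3,\lambda_3+4]$, while non-tempered (CAP) and endoscopic packets contribute lower-dimensional pieces all collected inside $\mathbb{S}_{\Gamma(1)}[\lambda_1-\lambda_2,\lambda_2-\lambda_3,\lambda_3+4]$.

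For the boundary part I would use the stratification of the Satake compactification $\mathcal{A}^*_3=\mathcal{A}_3\sqcup\mathcal{A}_2\sqcup\mathcal{A}_1\sqcup\mathcal{A}_0$, whose strata correspond to standard parabolics of $\mathrm{Sp}_6$. Using Franke's theorem identifying non-cuspidal cohomology with Eisenstein cohomology, together with the spectral sequence coming from the boundary stratification of the Borel-Serre (or toroidal) compactification, the three terms in $e_\mathrm{c,extr}(\mathcal{A}_3,\mathbb{V}_{\lambda_1,\lambda_2,\lambda_3})$ should match the three families of Eisenstein series induced from the Siegel parabolic (Levi $\mathrm{GL}_3$), the Klingen parabolic (Levi $\mathrm{GL}_1\times\mathrm{Sp}_4$), and the intermediate parabolic (Levi $\mathrm{GL}_2\times\mathrm{Sp}_2$). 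Each such contribution involves the cohomology of a boundary stratum isomorphic as a stack to $\mathcal{A}_2$ with a local system $\mathbb{V}_\mu$, twisted by a modular-form Galois representation $\mathbb{S}_{\Gamma(1)}[k]$ arising from the $\mathcal{A}_1$ factor inside the Levi. The three highest-weight shifts $(\lambda_1+1,\lambda_2+1)$, $(\lambda_1+1,\lambda_3)$, and $(\lambda_2,\lambda_3)$ appearing in the conjectural formula would be recovered from Kostant's formula for Lie algebra cohomology of the unipotent radicals, shifted by the half-sum of positive roots in each parabolic.

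The hard part will be twofold. First, controlling the Galois representations attached to cuspidal automorphic representations of $\mathrm{GSp}_6$ --- local-global compatibility, irreducibility, and Hodge-Tate weights --- is highly nontrivial: Kret-Shin handles generic holomorphic cusp forms, but the non-generic and endoscopic Arthur packets for $\mathrm{GSp}_6$ are still only partially understood, which is why \cite{BFvdG3} only conjectures irreducibility of the relevant generic piece $\mathbb{S}^\mathrm{gen}_{\Gamma(1)}[\cdot]$. Second, unwinding the Eisenstein contributions so as to recover the precise signs and Tate twists in the explicit formula for $e_\mathrm{c,extr}$ requires careful bookkeeping in the Borel-Serre spectral sequence; this is the step at which the $g=2$ result \ref{EichlerShimura2} feeds recursively into the $g=3$ formula, and where the distinction between $e_\mathrm{c}(\mathcal{A}_2,\mathbb{V}_\mu)$ and its extraneous part $e_\mathrm{c,extr}(\mathcal{A}_2,\mathbb{V}_\mu)$ has to be tracked carefully to avoid double counting contributions from deeper boundary strata. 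In practice, for any individual highest weight $\lambda$ one can instead verify the conjecture by combining Ta\"ibi's dimension formulas for vector-valued Siegel modular forms on $\mathrm{Sp}_6(\mathbb{Z})$ \cite{Taibi} with the classification results of Chenevier-Ta\"ibi \cite{ChenevierTaibi} and explicit point counts over small finite fields, which is exactly the mechanism that makes \ref{Theorem3} unconditional in the stated range.
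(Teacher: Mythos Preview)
The statement in question is a \emph{conjecture}, not a theorem: the paper does not give a proof. It is quoted verbatim from \cite[Conjecture 7.1]{BFvdG3} and used as a hypothesis in \ref{Theorem3}. The only discussion the paper offers in lieu of a proof is: (i) the conjecture was formulated after extensive point counts on curves of genus $\leq 3$ over finite fields; (ii) it is known unconditionally for $\lambda_1+\lambda_2+\lambda_3\leq 10$ by combining these point counts with the dimension formulas of Ta\"ibi \cite{Taibi} and the classification of Chenevier--Ta\"ibi \cite{ChenevierTaibi} (via \cite{CanningLarsonPayne2}); and (iii) it is claimed to be proven in general in unpublished work of Ta\"ibi \cite{TaibiAg}.

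Your outline is therefore not comparable to any proof in the paper, because there is none. That said, your sketch is broadly consistent with what the paper says about the expected strategy: the Langlands--Kottwitz comparison of the Grothendieck--Lefschetz and stabilized Arthur--Selberg trace formulas is exactly the method the paper attributes to Petersen for $g=2$ and to Ta\"ibi's unpublished work for $g\geq 3$. Your closing paragraph about verifying individual weights via Ta\"ibi's dimension tables, Chenevier--Ta\"ibi, and point counts matches the paper's own account of the unconditional range. What you have written is a reasonable roadmap toward the conjecture, but you should present it as such rather than as a proof; in particular the issues you flag (Galois-theoretic properties of the non-generic Arthur packets for $\mathrm{GSp}_6$, and the precise bookkeeping of signs and twists in the Eisenstein contribution) are genuine obstacles, not merely technicalities, and are the reason the statement remains conjectural in the published literature.
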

	
	This remains true for \((\lambda_1,\lambda_2,\lambda_3)=(0,0,0)\) if we set \(\mathbb{S}_{\Gamma(1)}[0,0,4]:=\mathbb{L}^6+\mathbb{L}^5+\mathbb{L}^4+2\mathbb{L}^3+\mathbb{L}^2+\mathbb{L}+1\): by \cite[Theorem 1]{Hain} we have 
	\begin{align*}
		e_\mathrm{c}(\mathcal{A}_3,\mathbb{Q}_\ell)=\mathbb{L}^6+\mathbb{L}^5+\mathbb{L}^4+\mathbb{L}^3+1
	\end{align*}
	As explained in \cite[Section 8]{BFvdG3} this conjecture was made after extensive point counts for curves up to genus \(3\) over finite fields. In particular by \cite[Remark 8.2]{BFvdG3} the conjecture is true for all \((\lambda_1,\lambda_2,\lambda_3)\) with \(\lambda_1+\lambda_2+\lambda_3\leq 6\) on the basis of these point counts since \(S_{\lambda_1-\lambda_2,\lambda_2-\lambda_3,\lambda_3+4}(\Gamma(1))\) has dimension \(0\) in these cases by \cite{Taibi}. In view of \cite[Theorem 1.9]{CanningLarsonPayne2}, using the classification results of Chevevier-Ta\"ibi \cite{ChenevierTaibi}, the conjecture is true for all \((\lambda_1,\lambda_2,\lambda_3)\) with \(\lambda_1+\lambda_2+\lambda_3\leq 10\) on the basis of these point counts. The conjecture is claimed to be proven unconditionally by unpublished work of Ta\"ibi \cite{TaibiAg}. 
	
	We will use the following values for the Euler characteristics \(e_\mathrm{c}(\mathcal{A}_3,\mathbb{V}_{\lambda_1,\lambda_2,\lambda_3})\), which are obtained by combining \ref{EichlerShimura3} with the vanishing \(S_{\lambda_1-\lambda_2,\lambda_2-\lambda_3,\lambda_3+4}(\Gamma(1))\) for all \(\lambda_1\geq\lambda_2\geq\lambda_3\geq 0\) with \(\lambda_1,\lambda_2,\lambda_3\leq 6\) obtained by \cite{Taibi} (compare to the tables at the end of \cite{BFvdG3}): 
	\begin{center}\begin{tabular}{|c|c|}
		\hline
		\((\lambda_1,\lambda_2,\lambda_3)\) & \(e_\mathrm{c}(\mathcal{A}_3,\mathbb{V}_{\lambda_1,\lambda_2,\lambda_3})\)\\
		\hline\hline
		\((0,0,0)\) & \(\mathbb{L}^6+\mathbb{L}^5+\mathbb{L}^4+\mathbb{L}^3+1\)\\
		\hline
		\((2,0,0)\) & \(-\mathbb{L}^3-\mathbb{L}^2\)\\
		\((1,1,0)\) & \(-\mathbb{L}\)\\
		\hline
		\((4,0,0)\) & \(-\mathbb{L}^3-\mathbb{L}^2\)\\
		\((3,1,0)\) & \(0\)\\
		\((2,2,0)\) & \(0\)\\
		\((2,1,1)\) & \(1\)\\
		\hline
		\((6,0,0)\) & \(-2\mathbb{L}^3-\mathbb{L}^2\)\\
		\((5,1,0)\) & \(-\mathbb{L}^4\)\\
		\((4,2,0)\) & \(-\mathbb{L}^5+\mathbb{L}\)\\
		\((4,1,1)\) & \(1\)\\
		\((3,3,0)\) & \(\mathbb{L}^7-\mathbb{L}\)\\
		\((3,2,1)\) & \(0\)\\
		\((2,2,2)\) & \(1\)\\
		\hline
		\((6,2,0)\) & \(\mathbb{L}\)\\
		\((6,1,1)\) & \(-\mathbb{L}^2+1\)\\
		\((5,3,0)\) & \(0\)\\
		\((5,2,1)\) & \(0\)\\
		\((4,4,0)\) & \(0\)\\
		\((4,3,1)\) & \(0\)\\
		\((4,2,2)\) & \(\mathbb{L}^4\)\\
		\((3,3,2)\) & \(-\mathbb{L}^6+1\)\\
		\hline
	\end{tabular}\qquad\begin{tabular}{|c|c|}
		\hline
		\((\lambda_1,\lambda_2,\lambda_3)\) & \(e_\mathrm{c}(\mathcal{A}_3,\mathbb{V}_{\lambda_1,\lambda_2,\lambda_3})\)\\
		\hline\hline
		\((6,4,0)\) & \(-\mathbb{L}^7+\mathbb{L}\)\\
		\((6,3,1)\) & \(-\mathbb{L}^2\)\\
		\((6,2,2)\) & \(0\)\\
		\((5,5,0)\) & \(\mathbb{L}^9-\mathbb{L}\)\\
		\((5,4,1)\) & \(0\)\\
		\((5,3,2)\) & \(-\mathbb{L}^3\)\\
		\((4,4,2)\) & \(0\)\\
		\((4,3,3)\) & \(-\mathbb{L}^4+1\)\\
		\hline
		\((6,6,0)\) & \(\mathbb{S}_{\Gamma(1)}[0,10]+\mathbb{L}^{10}\)\\
		\((6,5,1)\) & \(-\mathbb{L}^2\)\\
		\((6,4,2)\) & \(\mathbb{L}^6-1\)\\
		\((6,3,3)\) & \(1\)\\
		\((5,5,2)\) & \(-\mathbb{L}^8-\mathbb{L}^3+1\)\\
		\((5,4,3)\) & \(0\)\\
		\((4,4,4)\) & \(-\mathbb{L}^6+1\)\\
		\hline
		\((6,6,2)\) & \(\mathbb{S}_{\Gamma(1)}[0,10]-\mathbb{L}^9+\mathbb{L}^3\)\\
		\((6,5,3)\) & \(\mathbb{L}^4\)\\
		\((6,4,4)\) & \(0\)\\
		\((5,5,4)\) & \(-\mathbb{L}^8+1\)\\
		\hline
		\((6,6,4)\) & \(\mathbb{S}_{\Gamma(1)}[0,10]-\mathbb{L}^9\)\\
		\((6,5,5)\) & \(-\mathbb{L}^6+1\)\\
		\hline
		\((6,6,6)\) & \(\mathbb{S}_{\Gamma(1)}[0,10]-\mathbb{L}^9-\mathbb{L}^8+1\)\\
		\hline
	\end{tabular}\end{center}
	We will use another contribution which does not appear n the above table. For \(\lambda=(9,6,3)\) we have a contribution from an \(8\)-dimensional \(\ell\)-adic Galois representation \(\mathbb{S}_{\Gamma(1)}[3,3,7]\) which decomposes into a \(1\)-dimensional \(\ell\)-adic Galois representation of Tate type and an irreducible \(7\)-dimensional \(\ell\)-adic Galois representation (see \cite[Example 9.1]{BFvdG3}), which is explained by a functorial lift from the exceptional group \(\mathrm{G}_2\) predicted by \cite{GrossSavin}. 
	
	The Langlands correspondence predicts in this case that an irreducible \(8\)-dimensional Galois representation \(\rho:\mathrm{Gal}(\overline{\mathbb{Q}}/\mathbb{Q})\rightarrow\mathrm{GL}_8(\overline{\mathbb{Q}}_\ell)\) (which is the composition of a \(\mathrm{Spin}_7\) Galois representation \(\rho':\mathrm{Gal}(\overline{\mathbb{Q}}/\mathbb{Q})\rightarrow\mathrm{Spin}_7(\overline{\mathbb{Q}}_\ell)=\widehat{\mathrm{PGSp}}_6\) with the \(8\)-dimensional spin representation \(\mathrm{spin}:\mathrm{Spin}_7(\overline{\mathbb{Q}}_\ell)\rightarrow\mathrm{GL}_8(\overline{\mathbb{Q}}_\ell)\)) contributing to the cohomology \(H^*(\mathcal{A}_3,\mathbb{V}_\lambda)\) must come from a packet of cuspidal automorphic representations \(\pi\) of \(\mathrm{PGSp}_6(\mathbb{A}_\mathbb{Q})\) with \(\pi_\infty|_{\mathrm{Sp}_6(\mathbb{R})}\) varying over all members of a discrete series L-packet. As the \((\mathfrak{sp}_6,\mathrm{U}(3))\)-cohomology of such discrete series representations is concentrated in degree \(3\) by \cite{VoganZuckerman}, such a contribution can only occur in \(H^6(\mathcal{A}_3,\mathbb{V}_\lambda)\). 
	
	As explained in \cite{GrossSavin}, any such \(\rho':\mathrm{Gal}(\overline{\mathbb{Q}}/\mathbb{Q})\rightarrow\mathrm{Spin}_7(\overline{\mathbb{Q}}_\ell)\) factoring through the inclusion \(\widehat{\mathrm{G}}_2=\mathrm{G}_2(\overline{\mathbb{Q}}_\ell)\hookrightarrow\mathrm{Spin}_7(\overline{\mathbb{Q}}_\ell)=\widehat{\mathrm{PGSp}}_6\) of the stabilizer of a non-isotropic vector in the \(8\)-dimensional spin representation must come from a packet of cuspidal automorphic representations \(\pi\) of \(\mathrm{G}_2(\mathbb{A}_\mathbb{Q})\) which lifts to a packet of cuspidal automorphic representations \(\pi'\) of \(\mathrm{PGSp}_6(\mathbb{A}_\mathbb{Q})\) with \(\pi'_\infty|_{\mathrm{Sp}_6(\mathbb{R})}\) varying over all but one member of a discrete series L-packet, and again such a contribution can only occur in \(H^6(\mathcal{A}_3,\mathbb{V}_\lambda)\); the remaining \(1\)-dimensional Tate-type contribution comes from the cycle class of a Hilbert modular threefold in this Siegel modular \(6\)-fold. 
	
	We record these predictions as the following conjecture:
	
	\begin{conjecture}
	\label{MiddleGalois} Any irreducible \(\ell\)-adic Galois representation of dimension \(7\) or \(8\) occurring in \(H^*(\mathcal{A}_3,\mathbb{V}_\lambda)\) can only occur in \(H^6(\mathcal{A}_3,\mathbb{V}_\lambda)\). 
	\end{conjecture}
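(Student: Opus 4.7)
My plan is to combine the Langlands--Kottwitz method with Matsushima's formula and Vogan--Zuckerman theory, extending to $\mathcal{A}_3$ the approach that Petersen \cite{Petersen} carried out for $\mathcal{A}_2$. The starting point is that any irreducible $\ell$-adic Galois representation of dimension $7$ or $8$ appearing in $H^*(\mathcal{A}_3,\mathbb{V}_\lambda)$ must come from the cuspidal part of interior cohomology: the Eisenstein and boundary contributions (encoded by $e_\mathrm{c,extr}$ in \ref{EichlerShimura3}) are assembled from $\ell$-adic Galois representations attached to elliptic modular forms and to vector-valued Siegel modular forms on $\mathcal{A}_2$, whose irreducible constituents have dimension at most $4$. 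Via the Langlands--Kottwitz comparison of the Grothendieck--Lefschetz trace formula for $\mathcal{A}_3$ with the stabilized Arthur--Selberg trace formula (the content of the unpublished work \cite{TaibiAg}), this cuspidal interior contribution corresponds to a cuspidal automorphic representation $\pi$ of $\mathrm{PGSp}_6(\mathbb{A}_\mathbb{Q})$ whose global Langlands parameter has image in $\mathrm{Spin}_7(\overline{\mathbb{Q}}_\ell)=\widehat{\mathrm{PGSp}}_6$.

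Next I would apply Matsushima's formula to convert the cohomological degree question into a representation-theoretic one: the cuspidal contribution of $\pi$ to $H^*(\mathcal{A}_3,\mathbb{V}_\lambda)$ is computed by the relative Lie algebra cohomology $H^*(\mathfrak{sp}_6,\mathrm{U}(3);\pi_\infty\otimes V_\lambda^*)$, summed over the members of the archimedean $L$-packet. When $\lambda$ is regular, the only irreducible unitary $(\mathfrak{sp}_6,\mathrm{U}(3))$-modules with nonvanishing such cohomology are the discrete series representations in the packet matching the infinitesimal character of $V_\lambda$, and by Vogan--Zuckerman \cite{VoganZuckerman} their $(\mathfrak{g},K)$-cohomology is concentrated in the middle degree associated with the Hermitian symmetric space $\mathrm{Sp}_6(\mathbb{R})/\mathrm{U}(3)$. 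This forces all such contributions to land in $H^6(\mathcal{A}_3,\mathbb{V}_\lambda)$, settling the $8$-dimensional case.

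For the $7$-dimensional case I would invoke the functoriality of Gross--Savin \cite{GrossSavin}: if $\rho'$ factors through the embedding $\widehat{\mathrm{G}}_2=\mathrm{G}_2(\overline{\mathbb{Q}}_\ell)\hookrightarrow\mathrm{Spin}_7(\overline{\mathbb{Q}}_\ell)$ stabilizing a non-isotropic vector in the spin representation, the latter decomposes as an irreducible $7$-dimensional piece plus a Tate line, and the associated lift to $\mathrm{PGSp}_6$ still has $\pi_\infty$ belonging to all-but-one member of a discrete series $L$-packet. The Vogan--Zuckerman argument above applies verbatim to these members and again concentrates their contribution in $H^6$, while the missing $1$-dimensional Tate piece is accounted for by the class of an embedded Hilbert modular threefold and is therefore not part of any irreducible $7$-dimensional summand.

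The main obstacle is precisely the reason the statement is presented as a conjecture: the proof requires a full endoscopic/Arthur classification of the discrete automorphic spectrum of $\mathrm{PGSp}_6(\mathbb{A}_\mathbb{Q})$, together with the conjectural exceptional $\mathrm{G}_2$-transfer of Gross--Savin, and a Langlands--Kottwitz trace formula comparison for $\mathcal{A}_3$ with adequate control of the Eisenstein spectrum (via Franke's theorem or a Borel--Serre boundary analysis) to separate the interior from the boundary. This package is exactly what Ta\"ibi's forthcoming work \cite{TaibiAg} is expected to supply; modulo that input the remaining ingredient is the archimedean Vogan--Zuckerman vanishing, which is classical.
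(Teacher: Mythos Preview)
This statement is recorded in the paper as a \emph{conjecture}, not a theorem, with only the heuristic justification given in the two paragraphs immediately preceding it; there is no proof to compare against. Your proposal recapitulates exactly that heuristic---Langlands--Kottwitz to reach cuspidal automorphic representations of $\mathrm{PGSp}_6$, Vogan--Zuckerman \cite{VoganZuckerman} to concentrate discrete-series $(\mathfrak{g},K)$-cohomology in the middle degree, and Gross--Savin \cite{GrossSavin} for the $7$-dimensional $\mathrm{G}_2$ case---and you correctly flag at the end that the argument is conditional on precisely the unproven inputs (Ta\"ibi's trace comparison \cite{TaibiAg}, an Arthur-type classification for $\mathrm{PGSp}_6$, the exceptional theta lift) that keep the paper from upgrading the statement to a theorem.
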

	
	We obtain the following result, which is unconditional for \(1\leq n\leq 3\) on the basis of point counts (but is very much conditional on the above conjectures in the case \(n\geq 4\)):
	
	\begin{theorem}
	\label{Theorem3} Assume conjectures \ref{EichlerShimura3} and \ref{MiddleGalois}. Then the compactly supported Euler characteristic \(e_\mathrm{c}(\mathcal{X}^{\times n}_3,\mathbb{Q}_\ell)\) is Tate type for all \(1\leq n\leq 5\), and are given by:
	\begin{align*}
		e_\mathrm{c}(\mathcal{X}_3,\mathbb{Q}_\ell) &= \mathbb{L}^9+2\mathbb{L}^8+3\mathbb{L}^7+4\mathbb{L}^6+3\mathbb{L}^5+2\mathbb{L}^4+2\mathbb{L}^3+1\\
		e_\mathrm{c}(\mathcal{X}^{\times 2}_3,\mathbb{Q}_\ell) &= \mathbb{L}^{12}+4\mathbb{L}^{11}+10\mathbb{L}^{10}+20\mathbb{L}^9+25\mathbb{L}^8+24\mathbb{L}^7+17\mathbb{L}^6+\mathbb{L}^5-8\mathbb{L}^4-4\mathbb{L}^3-\mathbb{L}^2+4\mathbb{L}+5\\
		e_\mathrm{c}(\mathcal{X}^{\times 3}_3,\mathbb{Q}_\ell) &= \mathbb{L}^{15}+7\mathbb{L}^{14}+28\mathbb{L}^{13}+84\mathbb{L}^{12}+164\mathbb{L}^{11}+237\mathbb{L}^{10}+260\mathbb{L}^9\\
			&+ 164\mathbb{L}^8-21\mathbb{L}^7-171\mathbb{L}^6-212\mathbb{L}^5-107\mathbb{L}^4+47\mathbb{L}^3+99\mathbb{L}^2+75\mathbb{L}+29\\
		e_\mathrm{c}(\mathcal{X}^{\times 4}_3,\mathbb{Q}_\ell) &= \mathbb{L}^{18}+11\mathbb{L}^{17}+66\mathbb{L}^{16}+286\mathbb{L}^{15}+835\mathbb{L}^{14}+1775\mathbb{L}^{13}+2906\mathbb{L}^{12}+3480\mathbb{L}^{11}+2476\mathbb{L}^{10}\\
			&- 415\mathbb{L}^9-3846\mathbb{L}^8-5322\mathbb{L}^7-3781\mathbb{L}^6-597\mathbb{L}^5+2146\mathbb{L}^4+2877\mathbb{L}^3+1887\mathbb{L}^2+757\mathbb{L}+162\\
		e_\mathrm{c}(\mathcal{X}^{\times 5}_3,\mathbb{Q}_\ell) &= \mathbb{L}^{21}+16\mathbb{L}^{20}+136\mathbb{L}^{19}+816\mathbb{L}^{18}+3380\mathbb{L}^{17}+10182\mathbb{L}^{16}+23578\mathbb{L}^{15}\\
			&+ 42433\mathbb{L}^{14}+57157\mathbb{L}^{13}+47250\mathbb{L}^{12}-5213\mathbb{L}^{11}-84003\mathbb{L}^{10}-137082\mathbb{L}^9-124223\mathbb{L}^8\\
			&- 52325\mathbb{L}^7+33070\mathbb{L}^6+83756\mathbb{L}^5+83816\mathbb{L}^4+53066\mathbb{L}^3+22340\mathbb{L}^2+6134\mathbb{L}+891
	\end{align*}
	The compactly supported Euler characteristic \(e_\mathrm{c}(\mathcal{X}^{\times 6}_3,\mathbb{Q}_\ell)\) is given by:
	\begin{align*}
		e_\mathrm{c}(\mathcal{X}^{\times 6}_3,\mathbb{Q}_\ell) &= (\mathbb{L}^6+21\mathbb{L}^5+120\mathbb{L}^4+280\mathbb{L}^3+309\mathbb{L}^2+161\mathbb{L}+32)\mathbb{S}_{\Gamma(1)}[0,10]\\
			&+ \mathbb{L}^{24}+22\mathbb{L}^{23}+253\mathbb{L}^{22}+2024\mathbb{L}^{21}+11362\mathbb{L}^{20}+46613\mathbb{L}^{19}\\
			&+ 146665\mathbb{L}^{18}+364262\mathbb{L}^{17}+720246\mathbb{L}^{16}+1084698\mathbb{L}^{15}+1036149\mathbb{L}^{14}+38201\mathbb{L}^{13}\\
			&- 1876517\mathbb{L}^{12}-3672164\mathbb{L}^{11}-4024657\mathbb{L}^{10}-2554079\mathbb{L}^9+101830\mathbb{L}^8+2028655\mathbb{L}^7\\
			&+ 2921857\mathbb{L}^6+2536864\mathbb{L}^5+1553198\mathbb{L}^4+687157\mathbb{L}^3+215631\mathbb{L}^2+45035\mathbb{L}+4930
	\end{align*}
	where \(\mathbb{S}_{\Gamma(1)}[0,10]=\mathbb{S}_{\Gamma(1)}[18]+\mathbb{L}^9+\mathbb{L}^8\) is the \(4\)-dimensional \(\ell\)-adic Galois representation attached to the Saito-Kurokawa lift \(\chi_{10}\in S_{0,10}(\Gamma(1))\) of the weight \(18\) cusp form \(f_{18}=\Delta E_6\in S_{18}(\Gamma(1))\). In particular the  compactly supported Euler characteristic \(e_\mathrm{c}(\mathcal{X}^{\times n}_3,\mathbb{Q}_\ell)\) is not Tate type if \(n\geq 6\).
	\end{theorem}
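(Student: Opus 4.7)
The proof will follow the same three-step template used for Theorems \ref{Theorem1} and \ref{Theorem2}, specialized to $g=3$. First, by Proposition \ref{Leray} and Corollary \ref{EulerChar}, we reduce the computation to a sum
\begin{align*}
    e_\mathrm{c}(\mathcal{X}^{\times n}_3,\mathbb{Q}_\ell) = \sum_{j\geq 0}(-1)^j e_\mathrm{c}(\mathcal{A}_3,\mathbb{R}^j\pi^n_*\mathbb{Q}_\ell)
\end{align*}
over the Leray local systems. Next, by Proposition \ref{Kunneth} together with the decomposition of tensor products of wedge powers of the standard representation of $\mathrm{Sp}_6$ into irreducible highest-weight representations, I would write
\begin{align*}
    \mathbb{R}^j\pi^n_*\mathbb{Q}_\ell \simeq \bigoplus_{\lambda}\mathbb{V}_\lambda(\tfrac{|\lambda|-j}{2})^{\oplus m^{j,n}_\lambda}
\end{align*}
for explicit multiplicities $m^{j,n}_\lambda$. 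For $n\leq 6$ only highest weights with $|\lambda|\leq 2n\leq 12$ appear, and the Euler characteristics $e_\mathrm{c}(\mathcal{A}_3,\mathbb{V}_\lambda)$ for all such $\lambda$ are recorded in the table above (granting \ref{EichlerShimura3} and using Ta\"ibi's vanishing of $S_{\lambda_1-\lambda_2,\lambda_2-\lambda_3,\lambda_3+4}(\Gamma(1))$ in this range).

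The plan, then, is to compute the multiplicities $m^{j,n}_\lambda$ using a SAGE routine analogous to the one used in Theorem \ref{Theorem2}; the combinatorics for $\mathrm{Sp}_6$ is more elaborate than for $\mathrm{SL}_2$ or $\mathrm{Sp}_4$, but is entirely mechanical once one implements the Newell-Littlewood decomposition rules (or equivalently works through the specialization morphism from the universal character ring as in \cite{Koike}, \cite{KoikeTerada}). Substituting the tabulated values of $e_\mathrm{c}(\mathcal{A}_3,\mathbb{V}_\lambda)$ and summing yields the polynomials in $\mathbb{L}$ for $1\leq n\leq 5$ (all $\lambda$ with $|\lambda|\leq 10$ have Tate-type Euler characteristic) and the claimed expression for $n=6$, where the non-Tate contributions come precisely from $\lambda=(6,6,0)$, $(6,6,2)$, $(6,6,4)$, $(6,6,6)$, each contributing a polynomial in $\mathbb{L}$ times $\mathbb{S}_{\Gamma(1)}[0,10]$.

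Finally, for the non-Tate claim in the range $n\geq 6$: the contribution $\mathbb{S}_{\Gamma(1)}[0,10]$ to $e_\mathrm{c}(\mathcal{A}_3,\mathbb{V}_{6,6,0})$ gives rise to a non-Tate summand in $H^*_\mathrm{c}(\mathcal{X}^{\times n}_3,\mathbb{Q}_\ell)$ for every $n\geq 6$ by the K\"unneth decomposition of Proposition \ref{Kunneth}, since the highest weight $(6,6,0)$ persists in $\mathbb{R}^{12}\pi^n_*\mathbb{Q}_\ell$ as soon as $n\geq 6$. To see this contribution survives in the Euler characteristic, I would argue by parity as in Theorems \ref{Theorem1} and \ref{Theorem2}: since $H^*(\mathcal{A}_3,\mathbb{V}_\lambda)=0$ whenever $|\lambda|$ is odd, any potential cancellation of the irreducible $2$-dimensional summand $\mathbb{S}_{\Gamma(1)}[18]\subset\mathbb{S}_{\Gamma(1)}[0,10]$ must come from another irreducible $2$-dimensional summand of the same motivic weight occurring in $H^i(\mathcal{A}_3,\mathbb{V}_\lambda)$ in the opposite parity. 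Conjecture \ref{MiddleGalois} rules out any such contribution in $H^i$ for $i\neq 6$; combined with Poincar\'e duality and the classification of irreducible $2$-dimensional pieces appearing in $e_\mathrm{c}(\mathcal{A}_3,\mathbb{V}_\lambda)$ via \ref{EichlerShimura3} (which shows the only $2$-dimensional $\mathbb{S}_{\Gamma(1)}[k]$ pieces in the extra terms $e_\mathrm{c,extr}$ arise with specific Tate twists determined by $\lambda$), the parity matchup required for cancellation cannot occur.

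The main obstacle, as in the $g=2$ case, is bookkeeping rather than conceptual: one must carry out the $\mathrm{Sp}_6$ plethysm calculation correctly for all weights $|\lambda|\leq 12$ and for $0\leq j\leq 6n$, and then verify the non-cancellation argument in the presence of the more complicated table of Euler characteristics. Conceptually, the delicate point is invoking \ref{MiddleGalois} to constrain the cohomological degree in which irreducible $2$-dimensional Galois representations attached to $\mathbb{S}_{\Gamma(1)}[18]$ can appear; without it one cannot rule out exotic cancellations a priori, which is why the final non-Tate statement for $n\geq 6$ is conditional on \ref{MiddleGalois} in addition to \ref{EichlerShimura3}.
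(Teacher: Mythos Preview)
Your computational outline for $1\leq n\leq 6$ matches the paper's: combine \ref{Leray}, \ref{Kunneth}, and the tabulated values from \ref{EichlerShimura3}, with the multiplicities $m^{j,n}_\lambda$ computed mechanically. That part is fine.

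The gap is in your non-cancellation argument for $n\geq 6$. You invoke Conjecture \ref{MiddleGalois} to constrain the cohomological degree in which the irreducible $2$-dimensional piece $\mathbb{S}_{\Gamma(1)}[18]$ can appear, but \ref{MiddleGalois} says nothing about $2$-dimensional representations: it only asserts that irreducible $7$- and $8$-dimensional Galois representations are confined to $H^6(\mathcal{A}_3,\mathbb{V}_\lambda)$. So your parity argument has no input that prevents a $2$-dimensional $\mathbb{S}_{\Gamma(1)}[18]$ from showing up in some odd-degree $H^j(\mathcal{A}_3,\mathbb{V}_\lambda)$ and cancelling. Indeed the paper explicitly remarks (just after the proof) that one \emph{cannot} argue that the $\mathbb{S}_{\Gamma(1)}[0,10]$ contribution persists for all $n$ without multiplicity estimates.

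The paper's actual route is different: it uses the irreducible $7$-dimensional constituent of $\mathbb{S}_{\Gamma(1)}[3,3,7]$ occurring in $H^6(\mathcal{A}_3,\mathbb{V}_{9,6,3})$, hence in $H^{24}(\mathcal{X}^{\times 9}_3,\mathbb{Q}_\ell)$, and by K\"unneth in $H^{24}(\mathcal{X}^{\times n}_3,\mathbb{Q}_\ell)$ for all $n\geq 9$. Now \ref{MiddleGalois} does apply, forcing any cancelling $7$-dimensional piece to come from $H^6(\mathcal{A}_3,\mathbb{V}_\mu)$ (even degree), whereas cancellation would require odd degree; this handles $n\geq 9$. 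The remaining cases $n=7,8$ (and $n=6$) are then disposed of by extending the explicit computation and observing that the $\mathbb{S}_{\Gamma(1)}[0,10]$ coefficient is nonzero there. Your argument needs to be rerouted through a $7$- or $8$-dimensional contribution to make legitimate use of \ref{MiddleGalois}, and the small-$n$ cases must be checked directly.
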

	\begin{proof}
	Follows by combining \ref{Leray} and \ref{Kunneth} with \ref{EichlerShimura3}. In this case we computed the multiplicities \(m^{j,n}_\lambda\) with a SAGE program (available on request).
	
	To argue that \(e_\mathrm{c}(\mathcal{X}^{\times n}_3,\mathbb{Q}_\ell)\) is not Tate type if \(n\geq 6\) note that \(H^{24}(\mathcal{X}^{\times 9}_3,\mathbb{Q}_\ell)\) (which is not Tate type, owing to the \(8\)-dimensional contribution \(\mathbb{S}_{\Gamma(1)}[3,3,7]\) to \(H^6(\mathcal{A}_3,\mathbb{V}_{9,6,3})\), which decomposes into a \(1\)-dimensional contribution and an irreducible \(7\)-dimensional contribution) appears as a summand in \(H^{24}(\mathcal{X}^{\times n}_3,\mathbb{Q}_\ell)\) for all \(n\geq 9\) by the K\"unneth formula. This contribution cannot be cancelled in the Euler characteristic: since the contribution occurs in \(H^i(\mathcal{X}^{\times n}_3,\mathbb{Q}_\ell)\) for \(i\) even, any contribution leading to cancellation would have to occur in \(H^i(\mathcal{X}^{\times n}_3,\mathbb{Q}_\ell)\) for \(i\) odd. Since \(H^*(\mathcal{A}_3,\mathbb{V}_{\lambda_1,\lambda_2,\lambda_3})=0\) for \(\lambda_1+\lambda_2+\lambda_3>0\) odd, any contribution to \(H^i(\mathcal{X}^{\times n}_3,\mathbb{Q}_\ell)\) for \(i\) odd would have to come from a contribution to \(H^j(\mathcal{A}_3,\mathbb{V}_{\lambda_1,\lambda_2,\lambda_3})\) for \(j=1,3,5,7,9,11\), but there are no irreducible \(7\)-dimensional contributions in this case: the only irreducible \(7\)-dimensional contributions come from the contributions to \(H^6(\mathcal{A}_3,\mathbb{V}_{\lambda_1,\lambda_2,\lambda_3})\) predicted by \cite{GrossSavin}. The remaining cases \(n=7,8\) are checked by running the above computations further to see that the contribution \(\mathbb{S}_{\Gamma(1)}[0,10]\) persists. 
	\end{proof}
	
	Alternatively, note that \(H^{26}(\mathcal{X}^{\times 10}_3,\mathbb{Q}_\ell)\) (which is not Tate type, owing to the irreducible \(8\)-dimensional contributions \(\mathbb{S}_{\Gamma(1)}[2,2,6]\) and \(\mathbb{S}_{\Gamma(1)}[4,2,8]\) to \(H^6(\mathcal{A}_3,\mathbb{V}_{10,8,2})\) and \(H^6(\mathcal{A}_3,\mathbb{V}_{10,6,4})\) respectively, see \cite[Table 1, Table 2]{BFvdG3}) appears as a summand in \(H^{26}(\mathcal{X}^{\times n}_3,\mathbb{Q}_\ell)\) for all \(n\geq 10\) by the K\"unneth formula. This contribution cannot be cancelled in the Euler characteristic by the same argument as above: the only irreducible \(8\)-dimensional contributions come from the contribution \(\mathbb{S}^\mathrm{gen}[\lambda_1-\lambda_2,\lambda_2-\lambda_3,\lambda_3+4]\) to \(H^6(\mathcal{A}_3,\mathbb{V}_{\lambda_1,\lambda_2,\lambda_3})\). The remaining cases \(n=7,8,9\) are checked by running the above computations further to see that the contribution \(\mathbb{S}_{\Gamma(1)}[0,10]\) persists. This makes the above argument a bit less conjectural by removing the dependence on the functorial lift from \(\mathrm{G}_2\). That being said, since the above computations are already conditional on conjectures \ref{EichlerShimura3} and \ref{MiddleGalois}, we do not try to further justify the predictions of the Langlands correspondence which we have used in the above argument. 
	
	The contribution \((\mathbb{L}^6+21\mathbb{L}^5+120\mathbb{L}^4+280\mathbb{L}^3+309\mathbb{L}^2+161\mathbb{L}+32)\mathbb{S}_{\Gamma(1)}[0,10]\) to \(e_\mathrm{c}(\mathcal{X}^{\times 6}_3,\mathbb{Q}_\ell)\) comes from the following \(4\) contributions:
	\begin{align*}
		e_\mathrm{c}(\mathcal{A}_3,\mathbb{V}_{6,6,6}) &+ (15\mathbb{L}^2+35\mathbb{L}+15)e_\mathrm{c}(\mathcal{A}_3,\mathbb{V}_{6,6,4})\\
		&+ (15\mathbb{L}^4+105\mathbb{L}^3+189\mathbb{L}^2+105\mathbb{L}+15)e_\mathrm{c}(\mathcal{A}_3,\mathbb{V}_{6,6,2})\\
		&+ (\mathbb{L}^6+21\mathbb{L}^5+105\mathbb{L}^4+175\mathbb{L}^3+105\mathbb{L}^2+21\mathbb{L}+1)e_\mathrm{c}(\mathcal{A}_3,\mathbb{V}_{6,6,0})
	\end{align*}
	which explains why the coefficients in the polynomial \(\mathbb{L}^6+21\mathbb{L}^5+120\mathbb{L}^4+280\mathbb{L}^3+309\mathbb{L}^2+161\mathbb{L}+32\) are not symmetric: it arises as the sum of \(4\) polynomials with symmetric coefficients of different degrees. Note that the contribution \(\mathbb{S}_{\Gamma(1)}[0,10]\) should always persist, but we cannot argue this without estimates on the multiplicities \(m^{j,n}_\lambda\). 
	
	We obtain the following corollary:
	
	\begin{corollary}
	\label{MGF3} The first \(5\) terms of the moment generating function \(M_{\#A_3(\mathbb{F}_q)}(t)\) are rational functions in \(q\): 
	\begin{align*}
		1 &+ \scriptstyle(\mathbf{q^3+q^2+q+1}+\tfrac{-q^2-q}{q^6+q^5+q^4+q^3+1})t\\
		&+ \scriptstyle(\mathbf{q^6+3q^5+6q^4+10q^3}+6q^2+2q-2+\tfrac{-8q^5-14q^4-12q^3-7q^2+2q+7}{q^6+q^5+q^4+q^3+1})\tfrac{t^2}{2!}\\
		&+ (\begin{smallmatrix} \mathbf{q^9+6q^8+21q^7+56q^6}+81q^5\\ +79q^4+43q^3-45q^2-119q-106 \end{smallmatrix}+\tfrac{-23q^5+39q^4+110q^3+144q^2+194q+135}{q^6+q^5+q^4+q^3+1})\tfrac{t^3}{3!}\\
		&+ (\begin{smallmatrix} \mathbf{q^{12}+10q^{11}+55q^{10}+220q^9}+550q^8+950q^7+1185q^6\\ +785q^5-499q^4-2106q^3-2576q^2-1091q+807 \end{smallmatrix}+\tfrac{1478q^5+2929q^4+4176q^3+4463q^2+1848q-645}{q^6+q^5+q^4+q^3+1})\tfrac{t^4}{4!}\\
		&+ (\begin{smallmatrix} \mathbf{q^{15}+15q^{14}+120q^{13}+680q^{12}}+2565q^{11}+6817q^{10}+13515q^9+19521q^8\\ +17184q^7-3650q^6-40833q^5-63521q^4-42593q^3+3203q^2+33402q+42708 \end{smallmatrix}+\tfrac{45276q^5+71227q^4+52951q^3+19137q^2-27268q-41817}{q^6+q^5+q^4+q^3+1})\tfrac{t^5}{5!}
	\end{align*}
	\end{corollary}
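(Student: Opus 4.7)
The plan is that this corollary is an immediate arithmetic consequence of Theorem \ref{Theorem3} combined with the relation $\mathbb{E}(\#A_3(\mathbb{F}_q)^n)=\#\mathcal{X}^{\times n}_3(\mathbb{F}_q)/\#\mathcal{A}_3(\mathbb{F}_q)$ from Section 1 and the Grothendieck-Lefschetz trace formula \ref{GrothendieckLefschetz}. No new geometric input is needed; once Theorem \ref{Theorem3} is granted, this is bookkeeping.

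First I would invoke Theorem \ref{Theorem3}, which provides, for each $1\leq n\leq 5$, an explicit polynomial expression for $e_\mathrm{c}(\mathcal{X}^{\times n}_3,\mathbb{Q}_\ell)$ in the Lefschetz motive $\mathbb{L}=\mathbb{Q}_\ell(-1)$ in the Grothendieck group of $\ell$-adic Galois representations. Crucially, in this range $e_\mathrm{c}(\mathcal{X}^{\times n}_3,\mathbb{Q}_\ell)$ is Tate type, so that applying the trace of geometric Frobenius reduces to the substitution $\mathbb{L}\mapsto q$. By \ref{GrothendieckLefschetz} this yields $\#\mathcal{X}^{\times n}_3(\mathbb{F}_q)$ as an explicit polynomial in $q$ of degree $\mathrm{dim}(\mathcal{X}^{\times n}_3)=6+3n$, with coefficients read directly off the polynomials displayed in Theorem \ref{Theorem3}.

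Next I would record the denominator. By Hain's theorem (quoted in the introduction) we have $\#\mathcal{A}_3(\mathbb{F}_q)=q^6+q^5+q^4+q^3+1$, which is also the specialization of the Tate-type class $e_\mathrm{c}(\mathcal{A}_3,\mathbb{Q}_\ell)=\mathbb{L}^6+\mathbb{L}^5+\mathbb{L}^4+\mathbb{L}^3+1$ used throughout Theorem \ref{Theorem3}. Dividing $\#\mathcal{X}^{\times n}_3(\mathbb{F}_q)$ by $\#\mathcal{A}_3(\mathbb{F}_q)$ gives a rational function in $q$; performing polynomial long division against the sextic denominator produces a polynomial quotient of degree $3n$ together with a proper rational remainder whose numerator has degree at most $5$, matching the shape of each line displayed in the corollary. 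Assembling these quotients as coefficients of $t^n/n!$ for $0\leq n\leq 5$ (with the $n=0$ term contributing $1$) gives the truncated moment generating function.

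There is no substantive obstacle beyond Theorem \ref{Theorem3} itself. The one piece of care required is the arithmetic of the long division: one must verify that the polynomial parts printed in bold agree with the leading $3n+1$ terms of $\#\mathcal{X}^{\times n}_3(\mathbb{F}_q)/(q^6+q^5+q^4+q^3+1)$ expanded around $q=\infty$, and that the proper rational remainder collects into the displayed fraction. This is a finite mechanical check, best done with the same SAGE routine referenced in the proof of Theorem \ref{Theorem3}. As a sanity check on the output, the leading three coefficients of the polynomial part for each $n$ should match $\lambda^{n(n+1)/2}$ expanded to order $q^{-2}$, consistent with \ref{Conjecture1}, as the author remarks.
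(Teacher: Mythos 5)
Your proposal is correct and is exactly the route the paper takes: the corollary is an immediate bookkeeping consequence of Theorem \ref{Theorem3} (Tate-type Euler characteristics give $\#\mathcal{X}^{\times n}_3(\mathbb{F}_q)$ by $\mathbb{L}\mapsto q$, then divide by $\#\mathcal{A}_3(\mathbb{F}_q)=q^6+q^5+q^4+q^3+1$ and perform polynomial division). The paper supplies no separate proof for the MGF corollaries precisely because the argument is the one you describe, and your check on the degrees of the polynomial part ($3n$) and remainder ($\leq 5$) confirms the displayed shape.
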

	
	Note that the first \(4\) coefficients in each of these terms (in bold) are consistent with \ref{Conjecture1}. 
	
	\pagebreak

	\begin{landscape}
	\begin{table}
	\begin{center}\scalebox{0.55}{\begin{tabular}{|c|c|c|c|c|c|c|c|c|c|c|}
		\hline
		\(i\) & \(H^i(\mathcal{X}_1,\mathbb{Q}_\ell)\) & \(H^i(\mathcal{X}^{\times 2}_1,\mathbb{Q}_\ell)\) & \(H^i(\mathcal{X}^{\times 3}_1,\mathbb{Q}_\ell)\) & \(H^i(\mathcal{X}^{\times 4}_1,\mathbb{Q}_\ell)\) & \(H^i(\mathcal{X}^{\times 5}_1,\mathbb{Q}_\ell)\) & \(H^i(\mathcal{X}^{\times 6}_1,\mathbb{Q}_\ell)\) & \(H^i(\mathcal{X}^{\times 7}_1,\mathbb{Q}_\ell)\) & \(H^i(\mathcal{X}^{\times 8}_1,\mathbb{Q}_\ell)\) & \(H^i(\mathcal{X}^{\times 9}_1,\mathbb{Q}_\ell)\) & \(H^i(\mathcal{X}^{\times 10}_1,\mathbb{Q}_\ell)\)\\
		\hline
		\(0\) & \(1\) & \(1\) & \(1\) & \(1\) & \(1\) & \(1\) & \(1\) & \(1\) & \(1\) & \(1\)\\
		\(1\) & \(0\) & \(0\) & \(0\) & \(0\) & \(0\) & \(0\) & \(0\) & \(0\) & \(0\) & \(0\)\\
		\(2\) & \(\mathbb{L}\) & \(3\mathbb{L}\) & \(6\mathbb{L}\) & \(10\mathbb{L}\) & \(15\mathbb{L}\) & \(21\mathbb{L}\) & \(28\mathbb{L}\) & \(36\mathbb{L}\) & \(45\mathbb{L}\) & \(55\mathbb{L}\)\\
		\(3\) & & \(\mathbb{L}^3\) & \(3\mathbb{L}^3\) & \(6\mathbb{L}^3\) & \(10\mathbb{L}^3\) & \(15\mathbb{L}^3\) & \(21\mathbb{L}^3\) & \(28\mathbb{L}^3\) & \(36\mathbb{L}^3\) & \(45\mathbb{L}^3\)\\
		\(4\) & & \(\mathbb{L}^2\) & \(6\mathbb{L}^2\) & \(20\mathbb{L}^2\) & \(50\mathbb{L}^2\) & \(105\mathbb{L}^2\) & \(196\mathbb{L}^2\) & \(336\mathbb{L}^2\) & \(540\mathbb{L}^2\) & \(825\mathbb{L}^2\)\\
		\(5\) & & & \(3\mathbb{L}^4\) & \(\mathbb{L}^5+15\mathbb{L}^4\) & \(5\mathbb{L}^5+45\mathbb{L}^4\) & \(15\mathbb{L}^5+105\mathbb{L}^4\) & \(35\mathbb{L}^5+210\mathbb{L}^4\) & \(70\mathbb{L}^5+378\mathbb{L}^4\) & \(126\mathbb{L}^5+630\mathbb{L}^4\) & \(210\mathbb{L}^5+990\mathbb{L}^4\)\\
		\(6\) & & & \(\mathbb{L}^3\) & \(10\mathbb{L}^3\) & \(50\mathbb{L}^3\) & \(175\mathbb{L}^3\) & \(490\mathbb{L}^3\) & \(1176\mathbb{L}^3\) & \(2520\mathbb{L}^3\) & \(4950\mathbb{L}^3\)\\
		\(7\) & & & & \(6\mathbb{L}^5\) & \(5\mathbb{L}^6+45\mathbb{L}^5\) & \(\mathbb{L}^7+35\mathbb{L}^6+189\mathbb{L}^5\) & \(7\mathbb{L}^7+140\mathbb{L}^6+588\mathbb{L}^5\) & \(28\mathbb{L}^7+420\mathbb{L}^6+1512\mathbb{L}^5\) & \(84\mathbb{L}^7+1050\mathbb{L}^6+3420\mathbb{L}^5\) & \(210\mathbb{L}^7+2310\mathbb{L}^6+6930\mathbb{L}^5\)\\
		\(8\) & & & & \(\mathbb{L}^4\) & \(15\mathbb{L}^4\) & \(105\mathbb{L}^4\) & \(490\mathbb{L}^4\) & \(1764\mathbb{L}^4\) & \(5292\mathbb{L}^4\) & \(13860\mathbb{L}^4\)\\
		\(9\) & & & & & \(10\mathbb{L}^6\) & \(15\mathbb{L}^7+105\mathbb{L}^6\) & \(7\mathbb{L}^8+140\mathbb{L}^7+588\mathbb{L}^6\) & \(\mathbb{L}^9+63\mathbb{L}^8+720\mathbb{L}^7+2353\mathbb{L}^6\) & \(9\mathbb{L}^9+315\mathbb{L}^8+2700\mathbb{L}^7+7560\mathbb{L}^6\) & \(45\mathbb{L}^9+1155\mathbb{L}^8+8250\mathbb{L}^7+20790\mathbb{L}^6\)\\
		\(10\) & & & & & \(\mathbb{L}^5\) & \(21\mathbb{L}^5\) & \(196\mathbb{L}^5\) & \(1176\mathbb{L}^5\) & \(5292\mathbb{L}^5\) & \(19404\mathbb{L}^5\)\\
		\(11\) & & & & & & \(15\mathbb{L}^7\) & \(35\mathbb{L}^8+210\mathbb{L}^7\) & \(28\mathbb{L}^9+420\mathbb{L}^8+1512\mathbb{L}^7\) & \(9\mathbb{L}^{10}+315\mathbb{L}^9+2700\mathbb{L}^8+7560\mathbb{L}^7\) & \(\begin{matrix} \mathbb{S}_{\Gamma(1)}[12]+\mathbb{L}^{11}+99\mathbb{L}^{10}\\ +1925\mathbb{L}^9+12375\mathbb{L}^8+29700\mathbb{L}^7 \end{matrix}\)\\
		\(12\) & & & & & & \(\mathbb{L}^6\) & \(28\mathbb{L}^6\) & \(336\mathbb{L}^6\) & \(2520\mathbb{L}^6\) & \(13860\mathbb{L}^6\)\\
		\(13\) & & & & & & & \(21\mathbb{L}^8\) & \(70\mathbb{L}^9+378\mathbb{L}^8\) & \(84\mathbb{L}^{10}+1050\mathbb{L}^9+3420\mathbb{L}^8\) & \(45\mathbb{L}^{11}+1155\mathbb{L}^{10}+8250\mathbb{L}^9+20790\mathbb{L}^8\)\\
		\(14\) & & & & & & & \(\mathbb{L}^7\) & \(36\mathbb{L}^7\) & \(540\mathbb{L}^7\) & \(4950\mathbb{L}^7\)\\
		\(15\) & & & & & & & & \(28\mathbb{L}^9\) & \(126\mathbb{L}^{10}+630\mathbb{L}^9\) & \(210\mathbb{L}^{11}+2310\mathbb{L}^{10}+6930\mathbb{L}^9\)\\
		\(16\) & & & & & & & & \(\mathbb{L}^8\) & \(45\mathbb{L}^8\) & \(825\mathbb{L}^8\)\\
		\(17\) & & & & & & & & & \(36\mathbb{L}^{10}\) & \(210\mathbb{L}^{11}+990\mathbb{L}^{10}\)\\
		\(18\) & & & & & & & & & \(\mathbb{L}^9\) & \(55\mathbb{L}^9\)\\
		\(19\) & & & & & & & & & & \(45\mathbb{L}^{11}\)\\
		\(20\) & & & & & & & & & & \(\mathbb{L}^{10}\)\\
		\hline
	\end{tabular}}\end{center}
	\caption{\(H^i(\mathcal{X}^{\times n}_1,\mathbb{Q}_\ell)\) for \(1\leq n\leq 10\)}
	\end{table}
	\begin{table}
	\begin{center}\scalebox{0.56}{\begin{tabular}{|c|c|c|c|c|c|c|c|}
		\hline
		\(i\) & \(H^i(\mathcal{X}_2,\mathbb{Q}_\ell)\) & \(H^i(\mathcal{X}^{\times 2}_2,\mathbb{Q}_\ell)\) & \(H^i(\mathcal{X}^{\times 3}_2,\mathbb{Q}_\ell)\) & \(H^i(\mathcal{X}^{\times 4}_2,\mathbb{Q}_\ell)\) & \(H^i(\mathcal{X}^{\times 5}_2,\mathbb{Q}_\ell)\) & \(H^i(\mathcal{X}^{\times 6}_2,\mathbb{Q}_\ell)\) & \(H^i(\mathcal{X}^{\times 7}_2,\mathbb{Q}_\ell)\)\\
		\hline
		\(0\) & \(1\) & \(1\) & \(1\) & \(1\) & \(1\) & \(1\) & \(1\)\\
		\(1\) & \(0\) & \(0\) & \(0\) & \(0\) & \(0\) & \(0\) & \(0\)\\
		\(2\) & \(2\mathbb{L}\) & \(4\mathbb{L}\) & \(7\mathbb{L}\) & \(11\mathbb{L}\) & \(16\mathbb{L}\) & \(22\mathbb{L}\) & \(29\mathbb{L}\)\\
		\(3\) & \(0\) & \(0\) & \(0\) & \(0\) & \(0\) & \(0\) & \(0\)\\
		\(4\) & \(2\mathbb{L}^2\) & \(9\mathbb{L}^2\) & \(27\mathbb{L}^2\) & \(65\mathbb{L}^2\) & \(135\mathbb{L}^2\) & \(252\mathbb{L}^2\) & \(434\mathbb{L}^2\)\\
		\(5\) & \(\mathbb{L}^5\) & \(3\mathbb{L}^5+\mathbb{L}^4\) & \(6\mathbb{L}^5+3\mathbb{L}^4\) & \(10\mathbb{L}^5+6\mathbb{L}^4\) & \(15\mathbb{L}^5+10\mathbb{L}^4\) & \(21\mathbb{L}^5+15\mathbb{L}^4\) & \(28\mathbb{L}^5+21\mathbb{L}^4\)\\
		\(6\) & \(\mathbb{L}^3\) & \(9\mathbb{L}^3\) & \(49\mathbb{L}^3\) & \(191\mathbb{L}^3\) & \(590\mathbb{L}^3\) & \(1540\mathbb{L}^3\) & \(3542\mathbb{L}^3\)\\
		\(7\) & & \(4\mathbb{L}^6+3\mathbb{L}^5\) & \(21\mathbb{L}^6+18\mathbb{L}^5\) & \(66\mathbb{L}^6+60\mathbb{L}^5\) & \(160\mathbb{L}^6+150\mathbb{L}^5\) & \(330\mathbb{L}^6+315\mathbb{L}^5\) & \(609\mathbb{L}^6+588\mathbb{L}^5\)\\
		\(8\) & & \(4\mathbb{L}^4\) & \(49\mathbb{L}^4\) & \(326\mathbb{L}^4\) & \(1535\mathbb{L}^4\) & \(5698\mathbb{L}^4\) & \(17738\mathbb{L}^4\)\\
		\(9\) & & \(3\mathbb{L}^7+\mathbb{L}^6\) & \(\mathbb{L}^9+36\mathbb{L}^7+28\mathbb{L}^6\) & \(10\mathbb{L}^9+210\mathbb{L}^7+190\mathbb{L}^6\) & \(50\mathbb{L}^9+825\mathbb{L}^7+780\mathbb{L}^6\) & \(175\mathbb{L}^9+\mathbb{L}^8+2520\mathbb{L}^7+2415\mathbb{L}^6\) & \(490\mathbb{L}^9+7\mathbb{L}^8+6468\mathbb{L}^7+6216\mathbb{L}^6\)\\
		\(10\) & & \(\mathbb{L}^5\) & \(27\mathbb{L}^5\) & \(6\mathbb{L}^9+327\mathbb{L}^5\) & \(45\mathbb{L}^9+2457\mathbb{L}^5\) & \(189\mathbb{L}^9+13371\mathbb{L}^5\) & \(588\mathbb{L}^9+57540\mathbb{L}^5\)\\
		\(11\) & & & \(21\mathbb{L}^8+18\mathbb{L}^7\) & \(16\mathbb{L}^{10}+295\mathbb{L}^8+280\mathbb{L}^7\) & \(190\mathbb{L}^{10}+2085\mathbb{L}^8+2010\mathbb{L}^7\) & \(1155\mathbb{L}^{10}+21\mathbb{L}^9+9990\mathbb{L}^8+9570\mathbb{L}^7\) & \(4900\mathbb{L}^{10}+203\mathbb{L}^9+36995\mathbb{L}^8+35028\mathbb{L}^7\)\\
		\(12\) & & & \(7\mathbb{L}^6\) & \(15\mathbb{L}^{10}+191\mathbb{L}^6\) & \(225\mathbb{L}^{10}+2467\mathbb{L}^6\) & \(15\mathbb{L}^{11}+1539\mathbb{L}^{10}+20524\mathbb{L}^6\) & \(140\mathbb{L}^{11}+7014\mathbb{L}^{10}+125517\mathbb{L}^6\)\\
		\(13\) & & & \(6\mathbb{L}^9+3\mathbb{L}^8\) & \(10\mathbb{L}^{11}+210\mathbb{L}^9+190\mathbb{L}^8\) & \(\mathbb{L}^{13}+300\mathbb{L}^{11}+2850\mathbb{L}^9+2724\mathbb{L}^8\) & \(21\mathbb{L}^{13}+3255\mathbb{L}^{11}+105\mathbb{L}^{10}+22470\mathbb{L}^9+21294\mathbb{L}^8\) & \(196\mathbb{L}^{13}+21280\mathbb{L}^{11}+1568\mathbb{L}^{10}+124250\mathbb{L}^9+115542\mathbb{L}^8\)\\
		\(14\) & & & \(\mathbb{L}^7\) & \(6\mathbb{L}^{11}+65\mathbb{L}^7\) & \(351\mathbb{L}^{11}+1550\mathbb{L}^7\) & \(15\mathbb{L}^{13}+105\mathbb{L}^{12}+4536\mathbb{L}^{11}+20750\mathbb{L}^7\) & \(210\mathbb{L}^{13}+1470\mathbb{L}^{12}+32340\mathbb{L}^{11}+187063\mathbb{L}^7\)\\
		\(15\) & & & & \(66\mathbb{L}^{10}+60\mathbb{L}^9\) & \(190\mathbb{L}^{12}+2085\mathbb{L}^{10}+2010\mathbb{L}^9\) & \(36\mathbb{L}^{14}+4480\mathbb{L}^{12}+175\mathbb{L}^{11}+29295\mathbb{L}^{10}+27734\mathbb{L}^9\) & \(763\mathbb{L}^{14}+48580\mathbb{L}^{12}+4802\mathbb{L}^{11}+253526\mathbb{L}^{10}+233429\mathbb{L}^9\)\\
		\(16\) & & & & \(11\mathbb{L}^8\) & \(225\mathbb{L}^{12}+590\mathbb{L}^8\) & \(35\mathbb{L}^{14}+189\mathbb{L}^{13}+6426\mathbb{L}^{12}+13671\mathbb{L}^8\) & \(980\mathbb{L}^{14}+4872\mathbb{L}^{13}+76566\mathbb{L}^{12}+190394\mathbb{L}^8\)\\
		\(17\) & & & & \(10\mathbb{L}^{11}+6\mathbb{L}^{10}\) & \(50\mathbb{L}^{13}+825\mathbb{L}^{11}+780\mathbb{L}^{10}\) & \(21\mathbb{L}^{15}+3255\mathbb{L}^{13}+105\mathbb{L}^{12}+22470\mathbb{L}^{11}+21294\mathbb{L}^{10}\) & \(\begin{matrix} \mathbb{S}_{\Gamma(1)}[18]+\mathbb{L}^{17}+1176\mathbb{L}^{15}+63700\mathbb{L}^{13}\\ +6860\mathbb{L}^{12}+321048\mathbb{L}^{11}+294440\mathbb{L}^{10}+\mathbb{L}^9 \end{matrix}\)\\
		\(18\) & & & & \(\mathbb{L}^9\) & \(45\mathbb{L}^{13}+135\mathbb{L}^9\) & \(15\mathbb{L}^{15}+105\mathbb{L}^{14}+4536\mathbb{L}^{13}+5803\mathbb{L}^9\) & \(1520\mathbb{L}^{15}+7056\mathbb{L}^{14}+101136\mathbb{L}^{13}+131215\mathbb{L}^9\)\\
		\(19\) & & & & & \(160\mathbb{L}^{12}+150\mathbb{L}^{11}\) & \(1155\mathbb{L}^{14}+21\mathbb{L}^{13}+9990\mathbb{L}^{12}+9570\mathbb{L}^{11}\) & \(763\mathbb{L}^{16}+48580\mathbb{L}^{14}+4802\mathbb{L}^{13}+253526\mathbb{L}^{12}+233429\mathbb{L}^{11}\)\\
		\(20\) & & & & & \(16\mathbb{L}^{10}\) & \(15\mathbb{L}^{15}+1539\mathbb{L}^{14}+1540\mathbb{L}^{10}\) & \(980\mathbb{L}^{16}+4872\mathbb{L}^{15}+76566\mathbb{L}^{14}+60270\mathbb{L}^{10}\)\\
		\(21\) & & & & & \(15\mathbb{L}^{13}+10\mathbb{L}^{12}\) & \(175\mathbb{L}^{15}+\mathbb{L}^{14}+2520\mathbb{L}^{13}+2415\mathbb{L}^{12}\) & \(196\mathbb{L}^{17}+21280\mathbb{L}^{15}+1568\mathbb{L}^{14}+124250\mathbb{L}^{13}+115542\mathbb{L}^{12}\)\\
		\(22\) & & & & & \(\mathbb{L}^{11}\) & \(189\mathbb{L}^{15}+252\mathbb{L}^{11}\) & \(210\mathbb{L}^{17}+1470\mathbb{L}^{16}+32340\mathbb{L}^{15}+18228\mathbb{L}^{11}\)\\
		\(23\) & & & & & & \(330\mathbb{L}^{14}+315\mathbb{L}^{13}\) & \(4900\mathbb{L}^{16}+203\mathbb{L}^{15}+36995\mathbb{L}^{14}+35028\mathbb{L}^{13}\)\\
		\(24\) & & & & & & \(22\mathbb{L}^{12}\) & \(140\mathbb{L}^{17}+7014\mathbb{L}^{16}+3542\mathbb{L}^{12}\)\\
		\(25\) & & & & & & \(21\mathbb{L}^{15}+15\mathbb{L}^{14}\) & \(490\mathbb{L}^{17}+7\mathbb{L}^{16}+6468\mathbb{L}^{15}+6216\mathbb{L}^{14}\)\\
		\(26\) & & & & & & \(\mathbb{L}^{13}\) & \(588\mathbb{L}^{17}+434\mathbb{L}^{13}\)\\
		\(27\) & & & & & & & \(609\mathbb{L}^{16}+588\mathbb{L}^{15}\)\\
		\(28\) & & & & & & & \(29\mathbb{L}^{14}\)\\
		\(29\) & & & & & & & \(28\mathbb{L}^{17}+21\mathbb{L}^{16}\)\\
		\(30\) & & & & & & & \(\mathbb{L}^{15}\)\\
		\hline
	\end{tabular}}\end{center}
	\caption{\(H^i(\mathcal{X}^{\times n}_2,\mathbb{Q}_\ell)\) for \(1\leq n\leq 7\)}
	\end{table}
	\end{landscape}
	
\pagebreak	

\end{document}